\theoremstyle{plain}
\newtheorem{theorem}{Theorem}[section]
\newtheorem{lemma}[theorem]{Lemma}
\newtheorem{corollary}[theorem]{Corollary}
\newtheorem{proposition}[theorem]{Proposition}
\theoremstyle{definition}
\newtheorem{definition}[theorem]{Definition}
\theoremstyle{remark}
\newtheorem{remark}[theorem]{Remark}
\DeclareSymbolFont{AMSb}{U}{msb}{m}{n}
\DeclareMathSymbol{\N}{\mathalpha}{AMSb}{"4E}
\DeclareMathSymbol{\R}{\mathalpha}{AMSb}{"52}
\DeclareMathSymbol{\Z}{\mathalpha}{AMSb}{"5A}
\DeclareMathSymbol{\D}{\mathalpha}{AMSb}{"44}
\DeclareMathSymbol{\s}{\mathalpha}{AMSb}{"53}
\newcommand{\Q}{\ensuremath{\mathbb{Q}}}
\newcommand{\sX}{\scriptscriptstyle{X}}
\newcommand{\cH}{\mathcal{H}}
\newcommand{\cJ}{\mathcal{J}}
\newcommand{\cU}{\mathcal{U}}
\DeclareMathOperator{\Cpl}{Cpl}
\DeclareMathOperator{\tr}{tr}
\DeclareMathOperator{\supp}{supp}
\DeclareMathOperator{\sfd}{\mathsf{d}}
\DeclareMathOperator{\ric}{Ric}
\DeclareMathOperator{\sect}{Sec}
\DeclareMathOperator{\Geo}{Geo}
\DeclareMathOperator{\Ent}{Ent}
\newcommand{\ee}{{\rm e}}
\begin{document}

\title[Sectional and intermediate Ricci  bounds via Optimal Transport] {Sectional and intermediate Ricci curvature lower bounds via Optimal Transport} 
\author{Christian Ketterer}\thanks{C. Ketterer: Freiburg University, Mathematics Department, email: ckettere@math.toronto.edu}
\author {Andrea Mondino} \thanks{A. Mondino: University of Warwick,  Mathematics Institute, email: A.Mondino@warwick.ac.uk}

\keywords{Sectional curvature, intermediate Ricci curvature, Optimal Transport, Brunn-Minkowski inequality}

\bibliographystyle{plain}

\begin{abstract}
The goal of the paper is to give an optimal transport characterization of sectional curvature lower (and upper)  bounds for smooth $n$-dimensional Riemannian manifolds. More generally we characterize, via optimal transport, 
lower bounds on the so called $p$-Ricci curvature which  corresponds to taking the trace of the Riemann curvature tensor on $p$-dimensional planes, $1\leq p\leq n$.  Such characterization roughly consists on a convexity condition of the $p$-Renyi entropy along $L^{2}$-Wasserstein geodesics,  where the role of   reference measure is played by the $p$-dimensional Hausdorff measure.  As application we establish a new Brunn-Minkowski type inequality involving  $p$-dimensional submanifolds and the $p$-dimensional Hausdorff measure.
  \end{abstract}

\maketitle

\tableofcontents
\section{Introduction}
The interplay between Ricci curvature and optimal transport is well known and it has been a topic of tremendous interest in the last years.   On the other hand 
it seems to be still an open problem to find the link between \emph{sectional} curvature bounds (and more generally intermediate Ricci curvature bounds) and optimal transportation. The goal of the paper is to address such a question.
\\

Inspired by the pioneering work on  Ricci curvature lower bounds  via optimal transport by Sturm and von Renesse  \cite{SVR}, later extended to non-smooth spaces in the foundational  works of Lott-Villani \cite{lottvillani:metric} and  Sturm \cite{stugeo1, stugeo2}, we analyze convexity properties  of the $p$-Renyi entropy along $L^{2}$-Wasserstein geodesics,  where the role of  the  reference measure is played  here by the $p$-dimensional Hausdorff measure. 
In a first approximation, one can think of studying the convexity of the $p$-Renyi entropy along an $L^{2}$-Wasserstein geodesics made of probability measures concentrated on $p$-dimensional submanifolds of $M$. 
\\The study of optimal transportation between measures supported on arbitrary submanifolds in an arbitrary Riemannian manifold seems to be  quite a new topic in the literature. Nevertheless  several authors 
treated remarkable particular cases and related questions: 
\begin{itemize}
\item Gangbo-McCann \cite{GM1} proved results about optimal
transport between measures supported on hyper-surfaces in Euclidean space;
\item McCann-Sosio \cite{MS} and Kitagawa-Warren \cite{KW} gave more refined  results about optimal transport between two
measures supported on a  codimension one sphere in Euclidean space;
\item Castillon \cite{Cast} considered optimal transport between
a measure supported on a submanifold of Euclidean space and a measure supported on a
linear subspace;
\item Lott \cite{lott} characterized the tangent cone (in the $W_{2}$-metric) to a probability measure supported on a smooth submanifold of a Riemannian manifold.
\end{itemize}

In order to state the results, let us introduce some notation (for more details see  Section \ref{sec:Prel}). Let $(M^{n},g)$ be a smooth, complete, $n$-dimensional Riemannian manifold without boundary. For $p=\{1,\ldots,n\}$, denote by $\cH^{p}$ the $p$-dimensional Hausdorff measure and consider the space  $\mathcal{P}_{c}(M,\mathcal{H}^p)$ of  probability measures with compact support which are absolutely continuous with respect to $\cH^{p}$.  Given $1\leq p\leq p'<\infty$, the $p'$-Renyi entropy with respect to $\mathcal{H}^p$ is defined as
\begin{align*}
S_{p'}(\cdot|\mathcal{H}^p):\mathcal{P}_c(M,\mathcal{H}^p) \rightarrow [-\infty,0],\ \ S_{p'}(\mu|\mathcal{H}^p)=-\int\rho^{1-\frac{1}{p'}}d\mathcal{H}^p,
\end{align*}
where $\rho$ is the density of $\mu$ with respect to $\mathcal{H}^p$, i.e. $\mu=\rho \mathcal{H}^p$.
Note that in the borderline case $p'=p=1$, one gets 
$$S_{1}(\mu|\mathcal{H}^{1})=- \mathcal{H}^{1}(\supp(\mu)).$$

The (relative) Shannon entropy is defined by
\begin{align*}
\Ent(\cdot|\mathcal{H}^p):\mathcal{P}_c(M,\mathcal{H}^p) \rightarrow [-\infty,\infty],\ \ \Ent(\mu|\mathcal{H}^p)= \lim_{\varepsilon\downarrow 0} \int_{\{\rho>\varepsilon\}} \rho  \log \rho \, d\mathcal{H}^p.
\end{align*}
This coincides with  $ \int_{\{\rho>0\}} \rho  \log \rho \, d\mathcal{H}^p$, provided that $ \int_{\{\rho\geq 1\}} \rho  \log \rho \, d\mathcal{H}^p < \infty$, and $\Ent(\mu|\mathcal{H}^p):=\infty$ otherwise. 
Recall also the definition of the distortion coefficients. Given  $K\in \R$ , we set for $(t,\theta) \in[0,1] \times \R^{+}$, 
\begin{equation*}
\sigma_{K,1}^{(t)}(\theta):= 
\begin{cases}
\infty & \textrm{if}\ K\theta^{2} \geq \pi^{2}, \crcr
\displaystyle  \frac{\sin(t\theta\sqrt{K})}{\sin(\theta\sqrt{K})} & \textrm{if}\ 0< K\theta^{2} <  \pi^{2}, \crcr
t & \textrm{if} \ K \theta^{2}=0,  \crcr
\displaystyle   \frac{\sinh(t\theta\sqrt{-K})}{\sinh(\theta\sqrt{-K})} & \textrm{if}\ K\theta^{2} \leq 0.
\end{cases}
\end{equation*}
\\A subset $\Sigma\subset M$ is said  to be \emph{countably $\cH^{p}$-rectifiable} if, up to a $\cH^{p}$-negligible subset, it can be covered by countably many $p$-dimensional Lipschitz submanifolds. We say that a  $W_2$-geodesic $\{\mu_t\}_{t\in [0,1]}$ is \emph{countably $\mathcal{H}^p$-rectifiable} if  for every $t\in [0,1]$ the measure $\mu_{t}\in  \mathcal{P}_c(M,\mathcal{H}^p)$ is concentrated on a countably $\mathcal{H}^p$-rectifiable set $\Sigma_{t}\subset M$ (see Section \ref{sec:W2Rect} for a  through discussion of rectifiable $W_{2}$-geodesics and in particular Remark \ref{rem:rectmut} for a sufficient  generic condition of rectifiability). 

Our first main result is an optimal transport characterization of sectional curvature upper bounds.

\begin{theorem}[OT characterization of sectional curvature upper bounds, Theorem \ref{thm:USB}]\label{thm:UB}
Let $(M,g)$ be a complete Riemannian manifold without boundary and let $K\geq 0$. Then the following statements {\rm (i)} and {\rm (ii)} are equivalent: 
\begin{itemize}
\item[(i)] The sectional curvature of $(M,g)$ is bounded above by $K$.
\medskip
\item[(ii)]
Let $\{\mu_t\}_{t\in [0,1]}$ be a countably $\mathcal{H}^1$-rectifiable $W_2$-geodesic, and let $\Pi$ be the corresponding dynamical optimal plan. Then, if $t_0,t_1\in (0,1)$ and  $\tau(s)=(1-s)t_0+st_1$, it holds
\begin{align*}
\cH^{1}(\supp \mu_{\tau(s)})\leq \int\left[\sigma_{K,1}^{(1-s)}(|\dot{\gamma\circ\tau}|)\rho_{t_0}(\gamma(t_0))^{-1}+\sigma_{K,1}^{(s)}(|\dot{\gamma\circ \tau}|)\rho_{t_1}(\gamma(t_1))^{-1}\right]d\Pi(\gamma), \ \ \forall s \in [0,1],
\end{align*}
where $\rho_t$ is the density of $\mu_t$ with respect to $\mathcal{H}^1$.
\end{itemize}
In the case of $K=0$ the inequality in {\rm (ii)} becomes
\begin{align*}
\cH^{1}(\supp \mu_{\tau(s)})\leq (1-s)\cH^{1}(\supp\mu_{t_0}) + s\cH^{1}(\supp\mu_{t_1}), \ \ \forall s \in [0,1].
\end{align*}
\end{theorem}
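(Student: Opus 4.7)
The plan is to reduce both implications to a Jacobi field comparison along the geodesics in the support of the dynamical optimal plan $\Pi$. The key observation is that for a countably $\mathcal{H}^1$-rectifiable measure $\mu_t=\rho_t\mathcal{H}^1|_{\Sigma_t}$, the coarea-type identity $\int\rho_t(\gamma(t))^{-1}\,d\Pi(\gamma)=\int\rho_t^{-1}\,d\mu_t=\mathcal{H}^1(\supp\mu_t)$ (with $\leq$ in the last equality if the transport has multiplicity) exhibits $F_\gamma(t):=\rho_t(\gamma(t))^{-1}$ as the infinitesimal one-dimensional Jacobian along $\gamma$. Up to a multiplicative constant depending on the parametrization of $\Sigma_{t_0}$ and the angle between the tangent to $\Sigma_{t_0}$ at $\gamma(t_0)$ and $\dot\gamma(t_0)$, this Jacobian equals $|J_\gamma(t)|$, where $J_\gamma$ is the Jacobi field along $\gamma$ induced by the infinitesimal variation of $\gamma$ among nearby geodesics in $\supp\Pi$.

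For (i)$\Rightarrow$(ii), assume $\sect\leq K$. A standard Rauch-type computation (using the Jacobi equation and Cauchy--Schwarz) gives on $\{|J_\gamma|>0\}$ the inequality $|J_\gamma|''+K|\dot\gamma|^2|J_\gamma|\geq 0$; equivalently $F^*(s):=F_\gamma(\tau(s))$ satisfies $(F^*)''+K\theta_\gamma^2 F^*\geq 0$ with $\theta_\gamma=(t_1-t_0)|\dot\gamma|$. Provided $K\theta_\gamma^2<\pi^2$ (which holds $\Pi$-a.e.\ for $t_0,t_1\in(0,1)$ since the optimal transport avoids the cut locus), the Sturm maximum principle yields the pointwise comparison with the model-space solution
$$G(s):=\sigma_{K,1}^{(1-s)}(\theta_\gamma)\,F^*(0)+\sigma_{K,1}^{(s)}(\theta_\gamma)\,F^*(1),$$
namely $F^*(s)\leq G(s)$. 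Integrating against $\Pi$ gives (ii); the case $K=0$ specializes to the stated affine upper bound on $\mathcal{H}^1(\supp\mu_{\tau(s)})$.

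For (ii)$\Rightarrow$(i), I would argue by infinitesimal localization. Fix $x\in M$ and an orthonormal pair $\{u,v\}\subset T_xM$; the goal is $\sect(\mathrm{span}(u,v))\leq K$. For small $r,\epsilon>0$ let $\sigma$ be the unit-speed geodesic with $\sigma(r)=x$, $\dot\sigma(r)=u$, and take $\mu_0^\epsilon$, $\mu_1^\epsilon$ to be normalized $\mathcal{H}^1$-measures on segments of length $2\epsilon$ centered at $\sigma(0)$, $\sigma(2r)$ in the direction of the parallel transport of $v$. For $r,\epsilon$ sufficiently small, working in normal coordinates at $x$, the optimal transport is realized by geodesics approximately parallel to $\sigma$, the interpolation is countably $\mathcal{H}^1$-rectifiable, and $\mathcal{H}^1(\supp\mu_{1/2})=2\epsilon/\cos(r\sqrt{K'})+o(\epsilon r^2)$, where $K'=\sect(u,v)$. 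Plugging into (ii) with $t_0\downarrow 0$, $t_1\uparrow 1$, $s=1/2$, using the identity $\sigma_{K,1}^{(1/2)}(2r)=1/(2\cos(r\sqrt{K}))$ and Taylor-expanding both sides to second order in $r$, one obtains $K'r^2\leq Kr^2+o(r^2)$, which upon dividing by $r^2$ and letting $r\downarrow 0$ gives $K'\leq K$.

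The principal obstacle lies in (i)$\Rightarrow$(ii): rigorously identifying $\rho_t(\gamma(t))^{-1}$ with $|J_\gamma(t)|$ for $\Pi$-a.e.\ $\gamma$ in the merely Lipschitz-rectifiable setting, handling the generically oblique angle between the approximate tangent to $\Sigma_t$ and $\dot\gamma(t)$, and dealing with the negligible set where $\rho_t=0$ or where $\Sigma_t$ has no approximate tangent. In (ii)$\Rightarrow$(i) the technical point is to verify that the candidate transport is actually optimal for small parameters and to control the remainder terms in the Jacobi expansion so that they are of genuinely lower order than the leading $r^2$-term carrying the curvature.
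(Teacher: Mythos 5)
Your direction (i)$\Rightarrow$(ii) is essentially the paper's argument: Monge--Mather gives Lipschitz transport maps $T^t_{1/2}$, the Monge--Amp\`ere inequality (Lemma \ref{B}) identifies $\rho_t(\gamma_x(t))^{-1}$ with $\cJ_x(t)\rho_{1/2}(x)^{-1}$ where $\cJ_x(t)=\det DT^t_{1/2}(x)$ (which in dimension one is $|J_\gamma(t)|$), and the Jacobi-field comparison gives $\cJ_x''+K|\dot\gamma|^2\cJ_x\geq 0$. Your Rauch/Cauchy--Schwarz derivation of that differential inequality is correct and is a minor variant of the paper's route through Corollary \ref{cor:modric}, which proves $y''+(y')^2+\ric_1 = \|\mathcal U^\perp\|^2\geq 0$ and then discards the right-hand side; the obliquity term $\mathcal{U}^\perp$ you flag as an obstacle is not one in this direction, precisely because it has the favorable sign. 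Your worry about the case $t_0=0,t_1=1$ is also correctly placed: it is exactly where the Monge--Amp\`ere inequality \eqref{eq:MAineq} can fail to be an equality.

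In (ii)$\Rightarrow$(i) you use the same underlying idea as the paper (transport short segments perpendicular to a test direction, read off curvature from the interpolant), but the execution has a genuine gap. You send $t_0\downarrow 0$, $t_1\uparrow 1$; however Remark \ref{rem:SharpUB} in the paper exhibits a geodesic on a flat cylinder for which the right-hand side of (ii) jumps discontinuously as $t_0\downarrow 0$: for $t_0\in(0,1)$ one has $\rho_{t_0}\equiv\tfrac12$ on two arcs so $\int\rho_{t_0}(\gamma(t_0))^{-1}d\Pi=2$, while at $t_0=0$ one has $\rho_0\equiv 1$ and the integral equals $1$. Thus the family of inequalities in (ii) over $t_0,t_1\in(0,1)$ does not automatically pass to a boundary limit, and you cannot ``plug in $t_0\downarrow 0$, $t_1\uparrow 1$'' without first proving that, in your local construction, the transport map is injective and the densities continuous up to the endpoints. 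This is provable in your setting, but it must be proved, not taken for granted. The second unaddressed point is the asymptotics $\mathcal{H}^1(\supp\mu_{1/2})=2\epsilon/\cos(r\sqrt{K'})+o(\epsilon r^2)$: the error from replacing the variation field by the Jacobi field is of order $\epsilon^2$ in the segment length, so to dominate it by $\epsilon r^2$ you need the double limit $\epsilon=o(r^2)$ before $r\to 0$, plus a verification that the candidate coupling by ``approximately parallel'' geodesics is optimal. The paper sidesteps both difficulties: it stays strictly in the interior by restricting to $t\in[\tfrac12-\sigma,\tfrac12+\sigma]$ and affinely rescaling (so (ii) is applied with $t_0,t_1\in(0,1)$), and instead of a Taylor expansion it exploits continuity of $\ric_1$ and of the Hessian term to obtain a \emph{strict} differential inequality on a full neighborhood with a fixed gap $\epsilon$, from which the quantitative contradiction follows via the strict monotonicity \eqref{eq:Strictmonotsigma} of the distortion coefficients, with no asymptotic error control required.
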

See Remark \ref{rem:UBK>0} for the motivation why the upper bound $K$ must be non-negative. Let us also stress that  in the assertion (ii)  one cannot relax the assumption to  $t_0,t_1\in [0,1]$, see Remark \ref{rem:SharpUB} for a  counterexample.
\\

The second main result is an optimal transport characterization of sectional curvature lower bounds.  In order to state it, some more notation must be introduced.
First of all, given a  $\mathcal{H}^p$-rectifiable $W_2$-geodesic  $\{\mu_t\}_{t\in [0,1]}$, thanks to the Monge-Mather shortening principle \cite[Theorem 8.5]{viltot}  we know that, for every $t \in [0,1]$, $\mu_{t}=(T^{t}_{1/2})_{\sharp} \mu_{1/2}$ with $T^{t}_{1/2}:\Sigma_{1/2}\to \Sigma_{t}$ Lipschitz. 
For $\mu_{1/2}$-a.e. $x$ we can set (see  Lemma \ref{lem:vDiff} and Remark \ref{rem:defB} for the details) 
\begin{equation*}\label{eq:defBxt}
{B}_{x}(t): T_{x} \Sigma_{1/2} \to  T_{\gamma_{x}(t)} \Sigma_{t}, \quad {B}_{x}(t):=DT_{1/2}^{t}(x)
\quad \forall t\in [0,1].
\end{equation*} 
In  Lemma \ref{B} we will prove  a Monge-Amp\`ere inequality implying that  $B_x(t)$ is invertible.   Let $\gamma_{x}(t):=T^{t}_{1/2}(x)$ be a geodesic performing the transport  and consider 
\begin{align*}
\mathcal{U}_{x}(t)&:= (\nabla_tB_{x}(t)){B_{x}}(t)^{-1}: T_{\gamma_{x}(t)}\Sigma_t\rightarrow T_{\gamma_{x}(t)}M, \\
\mathcal{U}_{x}^{\perp}(t)&:=[\mathcal{U}_{x}(t)]^{\perp} :  T_{\gamma_{x}(t)}\Sigma_t\rightarrow (T_{\gamma_{x}(t)}\Sigma_t)^{\perp},
\end{align*}
where $\nabla_{t}$ denotes the covariant derivative along $\gamma_{x}(t)$ in $M$  and $\perp$ is the orthogonal projection on the orthogonal complement $(T_{\gamma(t)}\Sigma_t)^{\perp}$ of $T_{\gamma(t)}\Sigma_t$. If  $|\dot{\gamma}_{x}|\neq 0$,  we set
 \begin{equation*}
 \kappa_{\gamma_{x}}:[0,|\dot{\gamma}_{x}|]\rightarrow \mathbb{R}, \quad  \kappa_{\gamma_{x}}(|\dot{\gamma_{x}}|\,t) \, | \dot{\gamma}_{x}|^2:= \left\|\mathcal{U}_{x}^{\perp}(t) \right\|^{2}, \quad \forall t \in [0,1],
 \end{equation*}
 if $|\dot{\gamma}_{x}|=0$,  we set   $\kappa_{\gamma_{x}}(0)=0$.
We now introduce the generalized distortion coefficients $\sigma_{\kappa}$ associated to a continuous function $\kappa:[0,\theta] \to \mathbb{R}$ (cf. \cite{ketterer4}). First of all, the generalized $\sin$-function associated to $\kappa$, denoted by $\sin_{\kappa}$,  is defined as the unique solution $v:[0,\theta] \to \R$ of the equation
\begin{align*}
v''+\kappa v=0 \ \ \& \ \ v(0)=0, \ v'(0)=1.
\end{align*}
The generalized distortion coefficients $\sigma_{\kappa}^{(t)}(\theta)$, for $t \in [0,1]$ and $\theta>0$, are defined as 

\begin{align*}
\sigma_{\kappa}^{(t)}(\theta):=\begin{cases}
                              \frac{\sin_{\kappa}(t\theta)}{\sin_{\kappa}(\theta)} \ \ & \ \ \text{if} \ \sin_{\kappa}(t \theta)>0 \; {  \text{ for all } t \in [0,1], }\\
                            {  \infty } \ \ & \ \ \mbox{otherwise}.
                              \end{cases}
\end{align*}
In the case $\kappa=K={\rm const}$ one has $\sigma_{\kappa}^{(t)}(\theta)=\sigma_{K,1}^{(t)}(\theta)$.
It is convenient to also set $\sigma_{\kappa}^{(\cdot)}(0)\equiv 1$,
$\kappa^-(t)=\kappa(\theta-t)$ and $\kappa^{+}(t):=\kappa(t)$.
Finally, consider the  Green function ${\rm g}:[0,1]\times [0,1]\to [0,1]$ given by
\begin{equation*}
  {\rm g}(s,t):=
  \begin{cases}
    (1-s)t&\text{if }t\in[0,s],\\
    s(1-t)&\text{if }t\in [s,1].
  \end{cases}
\end{equation*}

We can now state the  optimal transport characterization of sectional curvature lower bounds.

\begin{theorem}[OT characterization of sectional curvature lower bounds]\label{Cor:mainSec}
Let $(M,g)$ be a complete $n$-dimensional Riemannian manifold without boundary and fix $K \in \R$.
\begin{itemize}
\item If $K\geq 0$ the next  conditions are equivalent:
\begin{itemize}\item[(i)] $M$ has  sectional curvature bounded from below by $K$.
 \item[(ii)] Let $p \in \{2,\ldots, n\}$  be arbitrary, let $\{\mu_{t}\}_{t \in [0,1]}$ be a $\cH^{p}$-rectifiable $W_{2}$-geodesic  and  $\Pi$ be the corresponding dynamical optimal plan. Then, for any $p'\geq p$, for all $t \in [0,1]$ it holds
  \begin{align*}
 S_{p'}(\mu_t|\mathcal{H}^p)\leq - \int \left[\sigma_{((p-1)K-\kappa_{\gamma}^{-})/p'}^{(1-t)}(|\dot{\gamma}|)\, \rho_0^{-\frac{1}{p'}}(\gamma(0)) + \sigma_{((p-1)K-\kappa_{\gamma}^+)/p'}^{(t)}(|\dot{\gamma}|)   \, \rho_1^{-\frac{1}{p'}}(\gamma(1))\right]d\Pi(\gamma).
\end{align*}
\item[(ii)']  The condition {\rm (ii)} holds for $p=2$.
\item[(iii)] Let $p \in \{2,\ldots, n\}$ be arbitrary, $\{\mu_t\}_{t\in [0,1]}$ and  $\Pi$ be as in {\rm (ii)}. Then for all $t \in [0,1]$ it holds
\begin{align*}
\Ent(\mu_t|\cH^{p})\leq (1-t)\Ent (\mu_0|\cH^{p})+t\Ent(\mu_1|\cH^{p}) - \int \int_0^1 {\rm g}(s,t)\, |\dot{\gamma}|^2 \,  ((p-1)K-\kappa_{\gamma}(s|\dot{\gamma}|) )\,  ds \, d\Pi(\gamma).
\end{align*}
\item[(iii)']  The condition {\rm (iii)} holds for $p=2$.
\end{itemize}

\item If $K\leq 0$ the next conditions are equivalent:
\begin{itemize}
\item[(i)] $M$ has  sectional curvature bounded from below by $K$.
 \item[(ii)] Let $p \in \{1,\ldots, n\}$ be arbitrary, let $\{\mu_{t}\}_{t \in [0,1]}$ be a $\cH^{p}$-rectifiable $W_{2}$-geodesic  and  $\Pi$ be the corresponding dynamical optimal plan. Then, for any $p'\geq p$, for all $t \in [0,1]$ it holds
  \begin{align*}
 S_{p'}(\mu_t|\mathcal{H}^p)\leq - \int \left[\sigma_{(\bar{K}-\kappa_{\gamma}^{-})/p'}^{(1-t)}(|\dot{\gamma}|)\, \rho_0^{-\frac{1}{p'}}(\gamma(0)) + \sigma_{(\bar{K}-\kappa_{\gamma}^+)/p'}^{(t)}(|\dot{\gamma}|)   \, \rho_1^{-\frac{1}{p'}}(\gamma(1))\right]d\Pi(\gamma),
\end{align*}
 where $\bar{K}:=\min\{p, n-1\} K$. 
 \item[(ii)']  The condition {\rm (ii)} holds for $p=1$.
\item[(iii)] Let $p \in \{1,\ldots, n\}$  be arbitrary, $\bar{K}$, $\{\mu_t\}_{t\in [0,1]}$ and  $\Pi$ be as in {\rm (ii)}. Then for all $t \in [0,1]$ it holds
\begin{align*}
\Ent(\mu_t|\cH^{p})\leq (1-t)\Ent (\mu_0|\cH^{p})+t\Ent(\mu_1|\cH^{p}) - \int \int_0^1 {\rm g}(s,t)\, |\dot{\gamma}|^2 \,  (\bar{K}-\kappa_{\gamma}(s|\dot{\gamma}|) )\,  ds \, d\Pi(\gamma).
\end{align*}
\item[(iii)']  The condition {\rm (iii)} holds for $p=1$.
\end{itemize}

\end{itemize}
\end{theorem}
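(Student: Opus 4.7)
\emph{Plan.} The strategy is to prove the forward implications (i)$\Rightarrow$(ii),\,(iii) by a Jacobi-field computation along the transport geodesics that is then integrated against the dynamical plan, and to recover (i) from (ii)$'$ or (iii)$'$ by localizing on small two-dimensional submanifolds to isolate individual sectional curvatures. The argument parallels the Sturm--von Renesse approach for Ricci curvature, but the main new ingredient is the tangent--normal decomposition $\mathcal U_{x}=\mathcal U_{x}^{T}+\mathcal U_{x}^{\perp}$ of the velocity gradient on the moving submanifold $\Sigma_{t}$.

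\emph{Forward direction.} Fix a $\cH^{p}$-rectifiable $W_{2}$-geodesic $\{\mu_{t}\}$ with dynamical optimal plan $\Pi$. The Monge--Mather shortening together with the Monge--Amp\`ere identity from Lemma \ref{B} give $\rho_{t}(\gamma_{x}(t))\,J_{x}(t)=\rho_{1/2}(x)$ for $\Pi$-a.e.\ $\gamma_{x}$, where $J_{x}(t):=\sqrt{\det(B_{x}(t)^{*}B_{x}(t))}$. The vectors $J_{i}(t):=B_{x}(t)e_{i}$ (for a basis $\{e_{i}\}$ of $T_{x}\Sigma_{1/2}$) are Jacobi fields along $\gamma_{x}$, and differentiating $\log J_{x}$ twice using the Jacobi equation and the orthogonal decomposition of $\mathcal U_{x}$ produces the pointwise Riccati-type inequality
\begin{align*}
(\log J_{x})''(t)+\frac{1}{p}\bigl((\log J_{x})'(t)\bigr)^{2}\leq-\bigl[(p-1)K-\kappa_{\gamma_{x}}(t|\dot\gamma_{x}|)\bigr]\,|\dot\gamma_{x}|^{2}
\end{align*}
whenever $\sect\geq K$. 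Here the $(p-1)K|\dot\gamma|^{2}$ term arises from tracing $R(\cdot,\dot\gamma)\dot\gamma$ over $T\Sigma_{t}$: the direction along $\dot\gamma\in T\Sigma_{t}$ (forced by Monge--Mather) contributes zero, and each of the $p-1$ remaining orthogonal directions contributes at least $K|\dot\gamma|^{2}$; the $\kappa_{\gamma}$ correction isolates the normal shear $\|\mathcal U^{\perp}\|^{2}$; and $\frac{1}{p}((\log J)')^{2}$ is the Cauchy--Schwarz lower bound on $\tr((\mathcal U^{T})^{2})$. For $K\leq 0$ and $p<n$ the $n-p$ normal directions and the associated tangent--normal sectional curvatures are exploited to upgrade $(p-1)K$ to $\bar K=\min\{p,n-1\}K$. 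Standard ODE comparison with $v''+\kappa v=0$ then yields, for any $p'\geq p$,
\begin{align*}
J_{x}(t)^{1/p'}\geq \sigma_{((p-1)K-\kappa_{\gamma}^{-})/p'}^{(1-t)}(|\dot\gamma|)\,J_{x}(0)^{1/p'}+\sigma_{((p-1)K-\kappa_{\gamma}^{+})/p'}^{(t)}(|\dot\gamma|)\,J_{x}(1)^{1/p'},
\end{align*}
which, upon substituting $\rho_{s}(\gamma(s))^{-1/p'}=\rho_{1/2}(x)^{-1/p'}J_{x}(s)^{1/p'}$ and integrating against $\mu_{1/2}$ (equivalently against $\Pi$), is exactly (ii). For (iii), dropping the non-negative term $\frac{1}{p}((\log J_{x})')^{2}$ gives the weaker pointwise bound $(\log J_{x})''(t)\leq -[(p-1)K-\kappa_{\gamma}(t|\dot\gamma|)]|\dot\gamma|^{2}$; combining this with $\Ent(\mu_{t}|\cH^{p})=-\int\log J_{x}(t)\,d\Pi(\gamma)+\mathrm{const}$ and double integration against the Green function $\mathrm{g}(\cdot,t)$ delivers the stated entropy convexity.

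\emph{Converse direction.} It suffices to show (ii)$'\Rightarrow$(i) and (iii)$'\Rightarrow$(i). For $K\geq 0$ the hypothesis is imposed only at $p=2$: fix $x_{0}\in M$ and a 2-plane $\sigma\subset T_{x_{0}}M$, and construct $\cH^{2}$-rectifiable $W_{2}$-geodesics $\{\mu^{\epsilon}_{t}\}$ concentrated on small disks of radius $\epsilon$ in a 2-dimensional submanifold $\Sigma^{\epsilon}$ tangent to $\sigma$ at $x_{0}$, with endpoint densities chosen so that as $\epsilon\to 0$ the transport approaches a rigid displacement along a single geodesic $\gamma$ of direction $v\in\sigma$ with $\kappa_{\gamma}^{\epsilon}\to 0$. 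A Taylor expansion of (ii)$'$ (or (iii)$'$) to second order in the small parameter isolates the residual curvature contribution $\sect(\sigma)|\dot\gamma|^{2}$, which by the assumed inequality must be $\geq K|\dot\gamma|^{2}$; arbitrariness of $\sigma$ then gives (i). For $K\leq 0$ the hypothesis is at $p=1$, transports reduce to motions along individual geodesics, and perpendicular endpoint perturbations probe each 2-plane in turn via the same expansion technique.

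\emph{Main obstacle.} The principal difficulty lies in the second-derivative computation for $\log J_{x}$ when the submanifold has non-trivial normal motion $\mathcal{U}^{\perp}\neq 0$: the tangential and normal components couple through the curvature operator, and one must correctly separate the $\kappa_{\gamma}$ contribution from the curvature trace while applying Cauchy--Schwarz only to $\tr((\mathcal U^{T})^{2})$. Justifying the Jacobi machinery $\cH^{p}$-a.e.\ in the rectifiable setting also relies on the Lipschitz regularity of the Monge--Mather map $T^{t}_{1/2}$ obtained in Lemma \ref{B}. A secondary but non-trivial technical point is the sharpening $(p-1)K\leadsto \min\{p,n-1\}K$ for $K\leq 0$, which requires tracing the curvature operator over a subspace of dimension larger than $p-1$, carefully chosen among combinations of tangent and normal directions to $\Sigma_{t}$.
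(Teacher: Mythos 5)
Your proposal follows, in unfolded form, exactly the route the paper takes: the paper proves the general optimal-transport characterization of $\ric_p \geq K$ (Theorem \ref{th:lower}) via the Jacobi/Riccati machinery of Propositions \ref{cor:imp} and \ref{C} plus a localization argument for the converse, and then deduces Theorem \ref{Cor:mainSec} as an immediate corollary using the elementary algebraic facts of Remark \ref{rem:p12nn-1} (for $K\geq 0$: $\sect\geq K$ is equivalent to $\ric_2\geq K$ and implies $\ric_p\geq(p-1)K$; for $K\leq 0$: $\sect\geq K$ is equivalent to $\ric_1\geq K$ and implies $\ric_p\geq \bar K$). You re-derive the content of Theorem \ref{th:lower} specialized to the sectional case rather than citing it, which is equivalent but more laborious; the mathematical content is the same.

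Two points in your forward-direction discussion are incorrect as stated, though they do not invalidate the conclusion. First, you assert that $\dot\gamma\in T\Sigma_t$ is ``forced by Monge--Mather.'' It is not: the transport velocity is $\nabla\phi_t$, the gradient of the Hamilton--Jacobi shift of the Kantorovich potential, and it carries no a priori tangential constraint; for instance in the sharpness example of Remark \ref{rem:thmSharp} the velocity is orthogonal to the moving segment. The reason $(p-1)K|\dot\gamma|^2$ is nevertheless the right coefficient for $K\geq0$ is that $\ric_p(P,w)\geq K\bigl(p|w|^2-|\top_P w|^2\bigr)\geq (p-1)K|w|^2$ holds for arbitrary $w$, with the worst case (not a forced case) occurring when $w\in P$. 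Second, for $K\leq0$ you describe $\bar K$ as an ``upgrade'' of $(p-1)K$ obtained by ``exploiting the $n-p$ normal directions''; this is the wrong picture, since $\bar K=\min\{p,n-1\}K\leq (p-1)K$ when $K\leq0$, so $\bar K$ is the weaker constant one is forced down to. It arises from the same inequality $\ric_p(P,w)\geq K\bigl(p|w|^2-|\top_P w|^2\bigr)$: when $p<n$ the quantity $p|w|^2-|\top_P w|^2$ can be as large as $p|w|^2$ (when $w\perp P$), so the lower bound degrades to $pK$; no separate argument over normal directions is involved. You should also say explicitly, as the paper does via Remark \ref{rem:p12nn-1}, that the converse at $p=2$ (resp.\ $p=1$) delivers $\ric_2\geq K$ (resp.\ $\ric_1\geq K$), which for the relevant sign of $K$ is equivalent to $\sect\geq K$; your description in terms of Taylor expansion is vaguer than the contradiction argument actually used, which hinges on choosing a potential with $\nabla^2\phi(x_0)=0$ to make $\kappa_\gamma$ small.
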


Note that, in case $p=n$, the correction term $\kappa_{\gamma}$ vanishes (indeed it does not appear in the OT characterization of Ricci curvature lower bounds), but for $p<n$ Theorem \ref{Cor:mainSec} is sharp in the sense that one can not suppress $\kappa_{\gamma}$ (see Remark \ref{rem:thmSharp});
more strongly, for the very same example of Remark \ref{rem:thmSharp},  all the inequalities involved in the proof Theorem \ref{Cor:mainSec}  become identities (see Remark \ref{rem:thmisharp}), showing the sharpness of the arguments.

Theorem \ref{Cor:mainSec} is actually a particular case of Theorem \ref{th:lower} (see also Remark \ref{rem:p12nn-1}, for the link between $p$-Ricci and sectional curvatures) where we characterize lower bounds on the $p$-Ricci curvature in terms of optimal transport, 
for any $p\in \{1,\ldots,n\}$. For the rigorous  definition and basic properties of the $p$-Ricci curvature we refer to Section \ref{sec:Prel},  here let us just mention the intuitive idea behind: in the standard Ricci curvature 
(corresponding  in this notation to the $n$-Ricci curvature), one considers the trace of the Riemann curvature tensor along \emph{all the tangent space} to $M$  at some point $x\in M$,   while in the $p$-Ricci curvature one considers the trace of the Riemann curvature tensor \emph{just along  $p$-dimensional subspaces}. The notion of $p$-Ricci curvature has already been considered in the literature, in particular in connection with topological results (see for instance  the works of  Wu \cite{Wu}, Shen \cite{Shen}, Wilhelm \cite{Wilhelm}, Petersen-Wilhelm \cite{PW} and Xu-Ye \cite{XY}). Just to fix the ideas, let us recall that if the sectional curvature is bounded below by $K\geq 0$, then the $p$-Ricci curvature is bounded below by $(p-1)K$; if instead  the sectional curvature is bounded below by $K\leq 0$, then the $p$-Ricci curvature is bounded below by $\min\{p, n-1\} K$.
\\
\medskip

The paper is organized as follows:  Section \ref{sec:Prel} settles the notation and the  preliminaries. In Section  \ref{sec:W2Rect} we analyze $\cH^{p}$-rectifiable $W_{2}$-geodesics and in Section \ref{sec:Jacobi} we perform the Jacobi fields computations/estimates that will be used to prove the main results. Section \ref{sec:UB} is devoted to the proof of Theorem \ref{thm:UB}, namely the optimal transport characterization of sectional curvature upper bounds. Finally, in Section  \ref{sec:LB} we state and prove our main results characterizing sectional and $p$-Ricci lower bounds in terms of optimal transportation; as a consequence, we also obtain a new Brunn-Minkowski type inequality involving $p$-dimensional submanifolds and the $p$-Ricci curvature (see Corollary \ref{cor:BM}).  
\\
\bigskip

{\bf Acknowledgement}:   Most of the work was done while both  authors were in residence at the 
Mathematical Sciences Research Institute in Berkeley, California during the 
Spring 2016 semester,  supported by the National Science Foundation
under Grant No. DMS-1440140.  We thank the organizers of the Differential Geometry Program and  MSRI for providing great environment for research and collaboration.
\\In the final steps of the project, A. M. has been supported by the EPSRC First Grant  EP/R004730/1.  
\\We also wish to express our gratitude to Robert McCann for suggesting the  Remark \ref{rem:rectmut}, and to  Martin Kell and  Gerardo Sosa  for  their careful reading  of the manuscript.

\section{Preliminaries}\label{sec:Prel}
\paragraph{\textbf{Optimal transport and Wasserstein geometry}}
It is out of the scopes of this short section to give a comprehensive introduction to  optimal transport, for this purpose we refer to   \cite{viltot}. 
Instead, we will be  satisfied by recalling those notions and results that we will use throughout the paper. 

Let $(X,\sfd)$ be a complete, separable and proper metric space.
A curve $\gamma:[0,1]\rightarrow X$ is said to be a (length-minimizing, constant speed) geodesic  if 
$$
\sfd(\gamma(s), \gamma(t))= |s-t| \, \sfd(\gamma(0),\gamma(1)), \quad \forall s,t \in [0,1].
$$
We denote by $\Geo(X):=\{\gamma: [0,1]\to X \text{ s.t. } \gamma \text{ is a geodesic}\}$ the family of 
geodesics  equipped with the $L^{\infty}$-topology. The evaluation map $\ee_t:\Geo(X)\rightarrow X$ is given by $\ee_t(\gamma)=\gamma(t)$, 
and it is  clearly continuous with respect to the sup-distance $\sfd_{\infty}(\gamma,\tilde{\gamma})=\sup_{t\in [0,1]}\sfd(\gamma(t),\tilde{\gamma}(t))$.

$\mathcal{P}_{c}(X)$ denotes the space of Borel probability measures with compact support and $\mathcal{P}_{2}(X)$ denotes the space of Borel probability
measures $\mu$ with finite second moment, i.e. satisfying $\int_{X} \sfd^{2}(x, x_{0}) \, d\mu(x)<\infty$ for some (and thus for any) $x_{0}\in X$. 

The space $\mathcal{P}_{2}(X)$ is naturally endowed with the $L^2$-Wasserstein distance $W_{2}$ defined by
$$
W_{2}(\mu_{1}, \mu_{2})^{2}:=\inf\left\{\int_{X\times X} \sfd^{2}(x,y) d\pi(x,y) \, \text{ s.t. } \pi \in  \Cpl(\mu_1,\mu_2)\right\},
$$
where $\Cpl(\mu_1,\mu_2)$ is the family of all couplings between $\mu_1$ and $\mu_2$, i.e. of all the probability measures $\pi\in \mathcal{P}(X^2)$ such that $(P_i)_{\sharp}\pi=\mu_i$, $i=1,2$,   $P_1,P_2$ being the projection maps. $(\mathcal{P}_{2}(X),W_2)$ becomes a separable metric space that is
a geodesic metric space provided $X$ is a geodesic metric space.

A coupling $\pi\in \Cpl(\mu_1,\mu_2)$ is optimal if 
\begin{align*}
\int_{X^2}\sfd(x,y)^2d\pi(x,y)=W_2(\mu_1,\mu_2)^2.
\end{align*}
Optimal couplings always exist, and if an optimal coupling $\pi$ is induced by a map $T:Z\rightarrow X$ via $(T,{\rm Id}_{\sX})_{\sharp}\mu_1=\pi$, where $Z$ is a measurable subset of $X$, we say that $T$ is an optimal map.
A probability measure $\Pi\in \mathcal{P}(\Geo(X))$ is called an optimal dynamical coupling or plan if $(\ee_0,\ee_1)_{\sharp}\Pi$ is an optimal coupling between the initial and final marginal distribution. 
For every $W_{2}$-geodesic $\{\mu_{t}\}_{t\in [0,1]}$ there exists  an optimal dynamical plan $\Pi\in \mathcal{P}(\Geo(X))$ such that $\mu_{t}=(\ee_{t})_{\sharp} \Pi$ for all $t\in [0,1]$.

\medskip

In the present paper, a key role is played by the subspace $\mathcal{P}_2(X,\mathcal{H}^p)\subset \mathcal{P}_2(X)$ made of  probability  measures that are absolutely continuous with respect to the $p$-dimensional Hausdorff measure
$\mathcal{H}^p$. We also denote with  $\mathcal{P}_{c}(X,\mathcal{H}^p)\subset \mathcal{P}_2(X,\mathcal{H}^p)$  the subspace of absolutely  continuous probability measures with compact support. 
\medskip

In the introduction, for simplicity,  we defined the entropy functionals for compactly supported probability  measures; the definitions carry over to probability  measures with  finite second moment, let us briefly recall  them.
Given $1\leq p\leq p'<\infty$, the $p'$-Renyi entropy with respect to the $p$-dimensional Hausdorff measure $\mathcal{H}^p$ is defined as
\begin{align*}
S_{p'}(\cdot|\mathcal{H}^p):\mathcal{P}_2(X,\mathcal{H}^p) \rightarrow [-\infty,0],\ \ S_{p'}(\mu|\mathcal{H}^p)=-\int\rho^{1-\frac{1}{p'}}d\mathcal{H}^p,
\end{align*}
where $\rho$ is the density of $\mu$ with respect to $\mathcal{H}^p$, i.e. $\mu=\rho \mathcal{H}^p$. Notice that,  by Jensen's inequality,  we have
\begin{align*}
[-\infty,0]\ni-\big(\mathcal{H}^p(\supp\mu)\big)^{1/p'}\leq S_{p'}(\mu|\mathcal{H}^p).
\end{align*}
In particular if $\rho$ is concentrated on a set of finite $\mathcal{H}^p$-measure then $S_{p'}(\mu|\mathcal{H}^p)>-\infty$.  
Note that in the borderline case $p'=p=1$, one gets 
$$S_{1}(\mu|\mathcal{H}^{1})=- \mathcal{H}^{1}(\supp(\mu)).$$

Finally, the (relative) Shannon entropy is defined by
\begin{align*}
\Ent(\cdot|\mathcal{H}^p):\mathcal{P}_2(X,\mathcal{H}^p) \rightarrow [-\infty,\infty],\ \ \Ent(\mu|\mathcal{H}^p)= \lim_{\varepsilon\downarrow 0} \int_{\{\rho>\varepsilon\}} \rho  \log \rho \, d\mathcal{H}^p.
\end{align*}
This coincides with  $ \int_{\{\rho>0\}} \rho  \log \rho \, d\mathcal{H}^p$, provided that $ \int_{\{\rho\geq 1\}} \rho  \log \rho \, d\mathcal{H}^p < \infty$, and $\Ent(\mu|\mathcal{H}^p):=\infty$ otherwise.

\paragraph{\textbf{Rectifiable sets}} 
Let $\Sigma\subset \R^{n}$ and $m \in \N, m\leq n$. We say that $\Sigma$ is \emph{countably $m$-rectifiable} if there is a countable family of
Lipschitz maps $f_i: \R^{m} \to \R^{n}$,  such that $\Sigma \subset \bigcup_{i}f_i(\R^{m})$.
The set $\Sigma$ is \emph{countably $\mathcal{H}^m$-rectifiable} if there is a {countably $m$-rectifiable set} $\Sigma'\subset \R^{n}$ such that $\mathcal{H}^m(\Sigma \backslash \Sigma')=0$.

As it is well known, using Whitney extension Theorem, it is possible to show that  a subset $\Sigma\subset \mathbb{R}^n$ is countably $\mathcal{H}^m$-rectifiable if and only if there exists a sequence of $m$-dimensional $C^1$-submanifolds $\{S_i\}_{i\in\N}$ such that 
$$\mathcal{H}^m\Big(\Sigma \backslash \bigcup_{i\in \N} S_i\Big)=0.$$

\noindent Clearly, by considering local coordinates (or by Nash isometric embedding Theorem),  one can define the same notions for  subsets of an $n$-dimensional Riemannian manifold.
\medskip
\paragraph{\textbf{Intermediate Ricci curvature}}
Let $(M,g)$ be an $n$-dimensional  Riemannian manifold and let
$$R: TM\times TM \times TM \to TM,  \quad R(X,Y)Z:=\nabla_{Y} \nabla_{X}Z-\nabla_{X}\nabla_{Y}Z+\nabla_{[X,Y]}Z$$
 the Riemannian curvature tensor (of course $\nabla$ denotes the Levi-Civita connection of $(M,g)$ and $[\cdot, \cdot]$ denotes the Lie bracket). Sometimes we will use the notation $|v|:=\sqrt{g(v,v)}$ and $\langle v, w\rangle:=g(v,w)$. Using the standard notation, $T_{x}M$ is the tangent space of $M$ at the point $x \in M$.
  For a $2$-plane $P\subset T_{x}M$ spanned by  $v,w\in T_{x}M$, let  
  $$\sect(P)=\sect(v,w):=\frac{\langle R(v,w)v, w\rangle}{|v|^{2} |w|^{2}- \langle v, w \rangle^{2}}$$
   be the sectional curvature. Recall that, given $w \in T_{x}M$, the Ricci curvature $\ric(w,w)$ is defined by
   $$\ric(w,w):=\mbox{tr}\left[R(w,\cdot)w \right]. $$
 
  \begin{definition}[$p$-Ricci Curvature]
Let $p\in \{1,\ldots, n\}$.   For a $p$-dimensional plane $P$ in $T_{x}M$ and a vector $w\in T_{x}M$, we define the $p$-Ricci curvature of $P$ in the direction of $w$ as
\begin{align}\label{eq:defRicp}
\ric_p(P,w)&:=\mbox{tr}\left[\top_{P}\circ\big(R(w,\cdot)w \big)|_P\right] =\sum_{i=1}^{p} \sect(e_i,w)(|w|^2-\langle e_i,w\rangle^2),
\end{align}
where $e_1,\dots,e_p$ is an orthonormal basis of $P$, and $\top_P:T_{x}M\rightarrow P$ is the orthogonal projection of $T_{x}M$ onto $P$.
\end{definition}

Note that, in particular, if $|w|=1$ and $w$ is orthogonal to $P$ then  
$$\ric_p(P,w)=\sum_{i=1}^{p}\sect(e_i,w).$$
It is standard to check that $\ric_p$ is well-defined and 
independent of the choice of a basis for $P$.  Notice also that, 
if $w\notin P$, then 
\begin{align}\label{eq:ricpp+1}
\ric_p(P,w)&=\ric_{p+1}(\mbox{span}(P,w),w)=\ric_p(\mbox{span}(P,w)\cap w^{\perp},w) = \sum_{i=1}^{p} \sect(e_{i}, w) |w|^{2},
\end{align}
where $\{e_i\}_{i=1,\dots,p}$ is an orthonormal basis of $\mbox{span}(P,w)\cap w^{\perp}$, $w^{\perp}\subset T_{x}M$ being the orthogonal  subspace to $w$.

\begin{definition}[$p$-Ricci upper and lower bounds]
We say that $(M,g)$ has $p$-Ricci curvature bounded from below (resp. from above) by $K$ if, for any $x \in M$ and  any $p$-dimensional plane $P\subset T_{x}M$, we have $\ric_p(P,w)\geq K|w|^2$ (resp. $\ric_p(P,w)\leq K|w|^2$); in this case we write $\ric_{p}\geq K$ (resp. $\ric_{p}\leq K$).
\end{definition}

\begin{remark}[Some notable cases]\label{rem:p12nn-1} 
The cases $p=1,2$ are strictly linked with the sectional curvature while $p=n-1,n$ are related to the standard Ricci curvature. More precisely 
\begin{itemize}
\item $p=1$: if $P$ is the real line spanned by $v$, $\langle v, w \rangle=0$, $|v|=|w|=1$, then
$$ \ric_1(P,w)=\sect(v,w);$$ 
on the other hand  $\ric_{1}(P,v)=0$, i.e. the 1-Ricci curvature always vanishes in the  direction of $P$ itself. 
In particular no Riemannian manifold has  $1$-Ricci curvature bounded from below (resp. above) by a strictly positive (resp. negative) constant. 
Nevertheless $M$ has non-negative (resp. non-positive) 1-Ricci curvature if   and only if the sectional curvature is non-negative (resp. non-positive).

\item  $p=2$:  if $P$ is the 2-plane spanned by the orthonormal vectors $e_{1},e_{2}$ then
\begin{equation}\label{eq:ric2sec}
 \ric_2(P,e_{1})=\ric_2(P,e_{2})=\sect(e_{1},e_{2}).
 \end{equation}
Moreover, if $w$ is orthogonal to $P$ with $|w|=1$ then
$$ \ric_2(P,w)=\sect(e_{1}, w)+\sect(e_{2},w).$$
In particular for every $K\geq 0$ (resp. $K\leq 0$),  it holds $\ric_{2}\geq K$ (resp. $\ric_{2}\leq K$) if and only if $\sect \geq K$ (resp. $\sect \leq K$). Note also that if $\sect \geq K\geq 0$ then for every $p\in\{2,\ldots, n\}$ it holds $\ric_{p}\geq (p-1)K$.
\item $p=n-1$:  if $P$ is an $n-1$-plane and $w$ is orthogonal to $P$, then 
$$\ric_{n-1}(P,w)=\ric(w,w). $$

\item $p=n$: in this case one has $P=T_{x}M$, and for every $w\in T_{x}M$ it holds
 $$\ric_n(T_x M,w)=\ric(w,w).$$  
 
 \item If $\sect \geq K$, depending on the sign of $K \in \R$ we have:
 \begin{itemize}
 \item[$\cdot$]  $\sect \geq K \geq 0$ implies that $\ric_{p}\geq (p-1) K$, for all $p\in \{1, \ldots, n\}$
 \item[$\cdot$]   $\sect \geq K$ with $K\leq 0$ implies that $\ric_{n}\geq (n-1) K$ and  $\ric_{p}\geq p K$ for  all $p\in \{1, \ldots, n-1\}$.   
 \end{itemize}
\end{itemize}
\end{remark}

\section{Countably $\mathcal{H}^p$-rectifiable geodesics in Wasserstein space} \label{sec:W2Rect}
 The next result is a well known consequence of the Monge-Mather shortening principle \cite[Theorem 8.5]{viltot}.
\begin{theorem}\label{thm:MongeMather}
Consider a Riemannian manifold $(M,g)$, fix a compact subset  $E \subset \subset M$ and let $\Pi$ be a dynamical optimal plan such that $(\ee_{t})_{\sharp}\Pi$ is supported in $E$ for every $t \in [0,1]$. 

Then $\Pi$ is supported on a set of geodesics $S\subset \Geo(M)$ satisfying the following: for every $t_{0}\in (0,1)$ there exists $C_E(t_{0})>0$ such that for any two geodesics $\gamma,\eta \in S$ it holds
\begin{align*}
\sup_{t \in [0,1]} \sfd(\gamma(t), \eta(t)) \leq C_E(t_{0}) \sfd(\gamma(t_{0}), \eta(t_{0})),
\end{align*}
where $\sfd$ is the Riemannian distance on $(M,g)$. 
\end{theorem}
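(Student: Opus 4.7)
The plan is to deduce Theorem \ref{thm:MongeMather} by repackaging the Monge-Mather shortening principle of \cite[Theorem 8.5]{viltot} in the quantitative form needed here. As a preliminary reduction, observe that since $(\ee_t)_\sharp\Pi$ is supported in the compact set $E$ for every $t\in[0,1]$, the subset
$$S:=\{\gamma\in\Geo(M):\gamma([0,1])\subset E\}\cap\supp\Pi$$
is closed and carries full $\Pi$-measure, so it suffices to verify the stated inequality for every pair $\gamma,\eta\in S$.

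First I would extract a shortening estimate at the endpoints $t\in\{0,1\}$. The optimality of $\Pi$ makes $(\ee_0,\ee_1)_\sharp\Pi$ a $c$-cyclically monotone coupling for the cost $c(x,y)=\sfd(x,y)^2$; applied to the pair $(\gamma,\eta)\in S\times S$ this gives the standard inequality
\begin{equation*}
\sfd^2(\gamma(0),\gamma(1))+\sfd^2(\eta(0),\eta(1))\leq \sfd^2(\gamma(0),\eta(1))+\sfd^2(\eta(0),\gamma(1)).
\end{equation*}
Since all four points lie in a compact set on which the Riemannian squared-distance function has uniformly bounded smoothness, a Taylor expansion of each term around the geodesic configuration at time $t_0$ turns this into the quantitative non-crossing bound
\begin{equation*}
\sfd(\gamma(i),\eta(i))\leq \tilde C_E(t_0)\,\sfd(\gamma(t_0),\eta(t_0)),\qquad i\in\{0,1\},
\end{equation*}
where $\tilde C_E(t_0)$ depends only on $t_0$ and on bounds on the Riemannian geometry over $E$. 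This is precisely the Riemannian form of the Monge-Mather shortening principle recorded in \cite[Theorem 8.5]{viltot}.

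To promote this endpoint control to a bound uniform in $t\in[0,1]$, I would use that on a compact geodesically convex neighborhood of $E$ the map $(x,y)\mapsto \gamma_{x,y}(t)$ sending an ordered pair of points to the point at time $t$ of the unique minimizing geodesic between them is Lipschitz with constant controlled uniformly in $t\in[0,1]$ by $E$ alone. Composing this Lipschitz map with the endpoint estimate from the previous paragraph and then taking the supremum over $t\in[0,1]$ yields a constant $C_E(t_0)$ with the property claimed in the theorem.

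The main obstacle in carrying out this plan is not the architecture of the argument but the explicit tracking of the shortening constant $\tilde C_E(t_0)$ through the Taylor expansion of $\sfd^2$, together with checking that the resulting dependence on $t_0$ stays finite for $t_0\in(0,1)$; this is exactly what is delivered by \cite[Theorem 8.5]{viltot}, and the remaining work is routine compactness-based bookkeeping of Lipschitz constants.
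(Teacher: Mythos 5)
Your proposal is correct and follows essentially the same route as the paper, which offers no proof of this theorem but simply records it as a direct consequence of the Monge--Mather shortening principle \cite[Theorem 8.5]{viltot}. Your sketch reconstructs the standard Monge--Mather argument (cyclical monotonicity of the endpoint coupling combined with quantitative control of the quadratic cost on a compact set, and then uniform-in-$t$ Lipschitz dependence of geodesics on their endpoints), and you correctly flag that the nontrivial quantitative step is precisely what the cited theorem delivers; the only extra observation worth recording is that the reduction to $S=\{\gamma:\gamma([0,1])\subset E\}\cap\supp\Pi$ having full $\Pi$-measure uses continuity of geodesics together with closedness of $E$ and a countable dense set of times.
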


\begin{remark}\label{rem:MM}
As a consequence of Theorem \ref{thm:MongeMather}, if $\{\mu_t\}_{t\in[0,1]}$ is a $W_{2}$-geodesic such that $\mu_{0},\mu_{1}$ are compactly
supported probability measures on $M$, and $t_{0}\in (0,1)$ is given, then for any $t\in [0,1]$ the
map
$T_{t_{0}}^{t}:\gamma(t_{0})\mapsto\gamma(t)$ is well-defined $\mu_{t_{0}}$-almost everywhere and Lipschitz continuous on its domain; moreover  it is
the unique optimal transport map between $\mu_{t_{0}}$ and $\mu_t$. In other words, the optimal coupling $(\ee_{t_{0}},\ee_t)_{\sharp}\Pi$ is induced by $T_{t_{0}}^{t}$, i.e.  $(\ee_{t_{0}},\ee_t)_{\sharp}\Pi=({\rm Id}, T_{t_{0}}^{t})_{\sharp} \mu_{t_{0}}$.
\end{remark}

\begin{lemma}\label{rem:mutPreiss}
Let $(M,g)$ be a  complete $n$-dimensional Riemannian manifold without boundary, and let $p\in \{1, \ldots, n\}$. Let $\mu_0,\mu_1\in \mathcal{P}_{c}(M,\mathcal{H}^p)$ and assume $\{\mu_t\}_{t \in [0,1]}$ is a $W_{2}$-geodesic  between $\mu_0, \mu_1$ such that for some $t_{0}\in (0,1)$ the measure $\mu_{t_{0}}$ is concentrated on a countably $\cH^{p}$-rectifiable set $\Sigma_{t_{0}}\subset M$. 

 Then  for every  $t \in [0,1]$ there exists a   countably $\cH^{p}$-rectifiable set $\Sigma_{t}\subset M$ such that $\mu_{t}$ is concentrated on $\Sigma_{t}$; moreover   $\mu_{t}=\rho_{t} \cH^{p} \llcorner \Sigma_{t}\in \mathcal{P}_{c}(M,\mathcal{H}^p)$  for  a suitable probability density $\rho_{t}\in L^{1}(M, \cH^{p})$.   
\end{lemma}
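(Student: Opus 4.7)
The plan is to exploit the Monge--Mather shortening consequence recorded in Remark \ref{rem:MM}: for every $t\in[0,1]$ the map $T_{t_0}^t\colon \gamma(t_0)\mapsto \gamma(t)$ is well-defined $\mu_{t_0}$-a.e.\ and Lipschitz on its domain, and pushes $\mu_{t_0}$ onto $\mu_t$. This is the only heavy input I need; the rest is a Lipschitz-image argument combined with a standard pull-back to check absolute continuity.

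First I would address the rectifiability of the support. Set
$\Sigma_t := T_{t_0}^t\bigl(\Sigma_{t_0}\cap \mathrm{dom}(T_{t_0}^t)\bigr).$
The intersection $\Sigma_{t_0}\cap \mathrm{dom}(T_{t_0}^t)$ is still countably $\mathcal{H}^p$-rectifiable, and a Lipschitz image of a countably $\mathcal{H}^p$-rectifiable set is again countably $\mathcal{H}^p$-rectifiable (this is immediate from the definition via Lipschitz parametrizations in local charts). Since $\mu_t = (T_{t_0}^t)_\sharp \mu_{t_0}$ and $\mu_{t_0}$ is concentrated on $\Sigma_{t_0}\cap \mathrm{dom}(T_{t_0}^t)$, the pushforward $\mu_t$ is concentrated on $\Sigma_t$. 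Compactness of the support of $\mu_t$ follows from compactness of $\mathrm{supp}(\mu_{t_0})$ and continuity of $T_{t_0}^t$.

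Next I would establish $\mu_t \ll \mathcal{H}^p\llcorner \Sigma_t$. For the endpoints $t\in\{0,1\}$ this is given by hypothesis. For interior $t\in(0,1)$, I apply Remark \ref{rem:MM} a second time, swapping the roles of $t_0$ and $t$: the optimal map $T_t^{t_0}$ is also well-defined $\mu_t$-a.e.\ and Lipschitz, and by uniqueness of optimal couplings one has $T_t^{t_0}\circ T_{t_0}^t = \mathrm{id}$ $\mu_{t_0}$-a.e. Let $A\subset \Sigma_t$ be Borel with $\mathcal{H}^p(A)=0$. Then
\[
\mu_t(A) = \mu_{t_0}\bigl((T_{t_0}^t)^{-1}(A)\bigr)\leq \mu_{t_0}\bigl(T_t^{t_0}(A)\bigr),
\]
because $(T_{t_0}^t)^{-1}(A)\subset T_t^{t_0}(A)$ modulo a $\mu_{t_0}$-null set, by the almost-everywhere identity $T_t^{t_0}\circ T_{t_0}^t=\mathrm{id}$. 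Since $T_t^{t_0}$ is Lipschitz, $\mathcal{H}^p\bigl(T_t^{t_0}(A)\bigr)=0$, and since $\mu_{t_0}\ll \mathcal{H}^p$, this forces $\mu_t(A)=0$. By the Radon--Nikodym theorem one then writes $\mu_t=\rho_t\, \mathcal{H}^p\llcorner \Sigma_t$ for some Borel density $\rho_t\in L^1(M,\mathcal{H}^p)$.

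The only potentially delicate step is the inclusion $(T_{t_0}^t)^{-1}(A)\subset T_t^{t_0}(A)$ up to a $\mu_{t_0}$-null set; this reduces to the $\mu_{t_0}$-a.e.\ left inverse relation $T_t^{t_0}\circ T_{t_0}^t=\mathrm{id}$, which itself follows from the uniqueness (Remark \ref{rem:MM}) of optimal maps between $\mu_{t_0}$ and $\mu_t$ together with the fact that both $T_{t_0}^t$ and $T_t^{t_0}$ induce the same optimal coupling read from the two sides. Everything else is routine.
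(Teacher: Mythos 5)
Your plan follows the same overall strategy as the paper (Monge--Mather shortening plus a Lipschitz-image argument), but there is a genuine gap in the absolute-continuity step. In the final sentence of your third paragraph you invoke \emph{$\mu_{t_0}\ll \mathcal{H}^p$} to conclude $\mu_t(A)=0$, but this is not among the hypotheses: the statement only assumes that $\mu_{t_0}$ is concentrated on a countably $\mathcal{H}^p$-rectifiable set $\Sigma_{t_0}$. Absolute continuity of $\mu_{t_0}$ is part of the \emph{conclusion} of the lemma (take $t=t_0$), so you cannot use it as an input. As written, the argument is circular.

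The repair is straightforward and matches the paper's proof: instead of pushing the null set from level $t$ back to level $t_0$, push it to one of the endpoints, where absolute continuity is given. Concretely, for $t\in(0,1)$ take the Lipschitz map $T_t^0$ (Remark \ref{rem:MM} applied with base point $t\in(0,1)$), and use that the coupling $\pi_{t,0}=(\ee_t,\ee_0)_\sharp\Pi$ is induced by $T_t^0$. Then $\mu_t(A)=\pi_{t,0}(A,M)=\pi_{t,0}(A,T_t^0(A))\leq\mu_0(T_t^0(A))$, and since $T_t^0$ is Lipschitz and $\mathcal{H}^p(A)=0$ one gets $\mathcal{H}^p(T_t^0(A))=0$, whence $\mu_0(T_t^0(A))=0$ because $\mu_0\ll\mathcal{H}^p$ \emph{by hypothesis}. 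This also lets you drop the discussion of the a.e.\ left inverse $T_t^{t_0}\circ T_{t_0}^t=\mathrm{id}$, which the paper avoids by arguing directly with the coupling. The rectifiability-of-support part of your proposal (Step one) is correct and is essentially identical to the paper's Step 1.
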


\begin{proof} 
\textbf{Step 1.}
By Theorem \ref{thm:MongeMather} and Remark \ref{rem:MM} we know that for every $t \in [0,1]$ there exists a Lipschitz map $T_{t_{0}}^{t}:\supp \mu_{t_{0}} \to \supp \mu_{t}$ such that $\mu_{t}=(T_{t_{0}}^{t})_{\sharp} \mu_{t_{0}}$. Since by assumption $\mu_{t_{0}}$ is concentrated on the countably $\cH^{p}$-rectifiable set $\Sigma_{t_{0}}$, it is then clear that $\mu_{t}$ is concentrated on $\Sigma_{t}:= T_{t_{0}}^{t} (\Sigma_{t_{0}})$ which is  countably $\cH^{p}$-rectifiable set too, as Lipschitz image of a countably $\cH^{p}$-rectifiable set. In order to conclude that $\mu_{t}=\rho_{t}\, \cH^{p}\llcorner \Sigma_{t}\in \mathcal{P}_{c}(M,\mathcal{H}^p)$ it is then enough to show that $\mu_{t}(A)=0$ for every   $A\subset \supp \mu_{t}$ satisfying $\mathcal{H}^p(A)=0$. 
This will be proved in Step 3, using the discussion of Step 2.
\\

\textbf{Step 2.}
Consider $\mu_0,\mu_1\in \mathcal{P}_c(M,\mathcal{H}^p)$ and write $\mu_i=\rho_i \mathcal{H}^p$ for $i=0,1$.
Let $\Pi\in \mathcal{P}(\Geo(M))$ a dynamical optimal plan between $\mu_0$ and $\mu_1$, and let $\left\{\mu_{t}:=(\ee_t)_{\sharp}\Pi\right\}_{t\in[0,1]}$ be the induced $L^2$-Wasserstein geodesic. 
We denote with  $\pi_{t,s}=(\ee_t,\ee_s)_{\sharp}\Pi$ the corresponding optimal coupling between $\mu_t$ and $\mu_s$ for any $t,s\in[0,1]$. Since $\mu_0$ and $\mu_1$ have compact support, then there exists  a compact subset $E\subset \subset M$ such that $\supp \mu_{t} \subset E$ for every $t \in [0,1]$.

By Theorem \ref{thm:MongeMather}, the dynamical optimal plan $\Pi$ is supported on set $S\subset \Geo(M)$ satisfying the following: for any  $t\in (0,1)$  there exists  $C_E(t)>0$ such that for any $s\in [0,1]$   it holds 
\begin{align}
\sfd(\gamma(s),\eta(s))\leq C_E(t)\sfd(\gamma(t),\eta(t)) \ \ \mbox{for any pair }\gamma,\eta \in S.
\end{align}
As observed  in Remark \ref{rem:MM}, the optimal plan $\pi_{t,s}$ is then induced  by a Lipschitz-continuous  optimal  transport map $T_{t}^{s}:\supp \mu_t \to \supp \mu_{s}$ with Lipschitz constant bounded above by $C_E(t)$.  In particular   $(T_{t}^{s})_\sharp \mu_t=\mu_s$. 
\\

\textbf{Step 3.} Let $t \in (0,1)$, and consider $\pi_{t,0}:= (\ee_t,\ee_0)_{\sharp}\Pi$. 
Our goal is to show that  if $A\subset \supp \mu_{t}$ satisfyies $\mathcal{H}^p(A)=0$, then $\mu_{t}(A)=0$ as well. Since by Step 2 the plan $\pi_{t,0}$ is induced by the map $T_{t}^{0}$,    we have
\begin{equation}\label{eq:musA}
\mu_t(A)=\pi_{t,0}(A,M)=\pi_{t,0}(A, T_{t}^{0}(A)).
\end{equation}
On the other hand 
\begin{equation}\label{eq:mu0Ts0A}
\pi_{t,0}(A, T_{t}^{0}(A))\leq  \pi_{t,0}(M, T_{t}^{0}(A)) =\mu_0(T_{t}^{0}(A)). 
\end{equation}
Since $T_{t}^{0}:\supp \mu_{t} \to \supp \mu_{0}$ is Lipschitz and $\cH^{p}(A)=0$, then it  also holds $\cH^{p}(T_{t}^{0}(A))=0$.  Recalling that  by assumption $\mu_{0}\ll \cH^{p}$,  we then  get that $\mu_0(T_{t}^{0}(A))=0$. 
The claim follows then by the combination of \eqref{eq:mu0Ts0A} and \eqref{eq:musA}.
\end{proof}

\begin{definition}\label{def:HpRectW2Geo}
We say that a  $W_2$-geodesic $\{\mu_t\}_{t\in [0,1]}$ is \emph{countably $\mathcal{H}^p$-rectifiable} if for every $t\in [0,1]$ the measure $\mu_{t}\in  \mathcal{P}_c(M,\mathcal{H}^p)$ is concentrated on a countably $\mathcal{H}^p$-rectifiable set $\Sigma_{t}\subset M$.
\end{definition}

\begin{remark}\label{rem:mutRect}
By Lemma \ref{rem:mutPreiss}, a  $W_2$-geodesic $\{\mu_t\}_{t\in [0,1]}$ is countably $\mathcal{H}^p$-rectifiable if and only if $\mu_0,\mu_1\in \mathcal{P}^2_c(M,\mathcal{H}^p)$ and there exists $t_{0}\in (0,1)$ such that the measure $\mu_{t_{0}}$ is concentrated on a countably $\mathcal{H}^p$-rectifiable set $\Sigma_{t_{0}}\subset M$.
\end{remark}

\begin{remark}\label{rem:welldefDet}
 Note that, in Definition \ref{def:HpRectW2Geo},  one can replace   $\Sigma_{t}$ by $\Sigma_{t}\cap \supp {\mu_{t}}$; thus from now on we will always tacitly assume that $\Sigma_{t}=\Sigma_{t}\cap \supp {\mu_{t}}$, for all $t \in [0,1]$.
Also,  since for $s\in (0,1)$ and $t\in [0,1]$ the optimal transport map  $T_{s}^{t}$ given in Remark \ref{rem:MM}  is well defined  $\mu_{s}$-a.e.,  from now on we will just consider the restriction  $T_{s}^{t}\llcorner \Sigma_{s}$ 
and, for simplicity of notation,  write $T_{s}^{t}$ to indicate the  map $T_{s}^{t}\llcorner \Sigma_{s}: \Sigma_{s}\to T_{s}^{t}(\Sigma_{s})$. 
 Note that, with this notation, for $\mu_{s}$-almost every $x$, the differential $DT_{s}^{t}(x)$ is a linear map from the 
 $p$-dimensional space $T_{x}\Sigma_{s}$ to the $q$-dimensional space $T_{T_{s}^{t}(x)} (T_{s}^{t}(\Sigma_{s}))$, $q\leq p$ ($q$ possibly depending on $x$).
 \end{remark}
 
\begin{remark}[A sufficient condition for the $p$-rectifiability of $\mu_{t}$]\label{rem:rectmut}
The following sufficient condition for the $p$-rectifiability of the geodesic $\mu_{t}$ follows by combining the work of   McCann-Pass-Warren  \cite[Theorem 1.2]{MPW} with Lemma \ref{rem:mutPreiss}.

Given $p \in \{1,\ldots, n\}$, let   $\mu_0,\mu_1\in \mathcal{P}_{c}(M,\mathcal{H}^p)$ with $\mu_i=\rho_i \,\mathcal{H}^p \llcorner \Sigma_{i}$, for some smooth $p$-dimensional submanifolds  $\Sigma_{i}$, $i=1,2$.
 Consider the restriction of the quadratic cost function $\sfd^{2}$ to the product $\Sigma_{0}\times \Sigma_{1}$; if 
 \begin{equation}\label{eq:HessND}
 \det\left[\left(\frac{\partial^2}{\partial x_i\partial y_j}\sfd^{2}|_{\Sigma_{0}\times \Sigma_{1}}\right)_{i,j=1,\dots,p} \right]\neq 0
 \end{equation}
and moreover
\begin{equation}\label{eq:CutLoc}
\Sigma_{0} \cap \big( \bigcup_{x \in \Sigma_{1}} \textrm{Cut}(x) \big) =\emptyset \quad \text{and} \quad  \Sigma_{1} \cap \big( \bigcup_{x \in \Sigma_{0}} \textrm{Cut}(x) \big) =\emptyset,
\end{equation} 
where $ \textrm{Cut}(x) $ is the cut locus of the point $x\in M$,   then every $W_{2}$-geodesic   $\{\mu_t\}_{t \in [0,1]}$ between $\mu_0$ and $\mu_1$ satisfies  that   $\mu_{t}=\rho_{t} \cH^{p} \llcorner \Sigma_{t}\in \mathcal{P}_{c}(M,\mathcal{H}^p)$ for every $t\in [0,1]$ for a  countably $\cH^{p}$-rectifiable set $\Sigma_{t}\subset M$.

Indeed,  calling $\pi$ the $L^{2}$-optimal coupling induced by the geodesic $\{\mu_t\}_{t \in [0,1]}$,  by using \eqref{eq:HessND}  we can apply \cite[Theorem 1.2]{MPW} and get that  $\pi$ is supported on a  $p$-dimensional Lipschitz submanifold $S$ of $ \Sigma_{0}\times \Sigma_{1} \subset M\times M$.
Using now \eqref{eq:CutLoc}, we get that for every $(x,y)\in \Sigma_{0}\times \Sigma_{1}$ there exists a unique geodesic $t\mapsto \gamma_{t}(x,y)$  from $x=\gamma_{0}(x,y)\in \Sigma_{0}$ to $y=\gamma_{1}(x,y) \in \Sigma_{1}$; moreover the map $\gamma_{t}(\cdot,\cdot):  \Sigma_{0}\times \Sigma_{1}\to M\times M$ is Lipschitz, for every fixed $t\in [0,1]$.  Calling $\Sigma_{t}:=\gamma_{t}(S)$ we get that $\mu_{t}$ is concentrated on the $p$-rectifiable subset $\Sigma_{t}$. 
The fact that we can write $\mu_{t}= \rho_{t} \cH^{p} \llcorner \Sigma_{t}$ for some density  $\rho_{t}\in L^{1}(\cH^{p})$ follows then by Lemma \ref{rem:mutPreiss}. 
\end{remark}

\begin{remark}
 For fixed $s$ and $t$,  pick a (resp. orthonormal) basis $(e_i)_{i=1,\dots,p}$ of $T_x\Sigma_s\subset T_xM$,  
 and also a (resp. orthonormal) basis $(f_i)_{i=1,\dots,n}$ of  $T_{T_{s}^{t}(x)}M$ such that $(f_i)_{i=1,\dots,q}$ is a basis of $T_{T_s^t(x)}(T_s^t(\Sigma_s))$.  
 We can then see $DT_{s}^{t}(x)$ as a linear map from $\R^{p}$ to $\R^{p}$ (if $q<p$ just identify $\R^{q}$ with $\{(x^{1}, \ldots, x^{p}) \, : \, x^{1}=\ldots=x^{p-q}=0\}$). Since  the rank and the  determinant are independent of the chosen basis,  $\det [DT_{s}^{t}(x)]$ and   the fact that  $DT_{s}^{t}(x)$ is non-degenerate are then well defined concepts.
 \end{remark}
In the next lemma we show that the optimal transport map $T_{s}^{t}$ is differentiable $\mu_{s}$-a.e. on $\Sigma_{s}$ and that at least an inequality holds in the Monge-Amp\`ere equation (cf. \cite{CMS}); this will be sufficient (and crucial) to our aims of  characterizing curvature bounds in terms of optimal transport.

\begin{lemma}\label{B}
Let $M$ be a complete Riemannian manifold and $\{\mu_{t}\}_{t\in[0,1]}$  a $W_{2}$-geodesic with $\mu_{t}\ll \cH^{p}\llcorner \Sigma_{t} \in {\mathcal P}_{c}(M,\cH^{p})$ for some countably $\cH^{p}$-rectifiable subset $\Sigma_{t}\subset M$, for every $t\in [0,1]$.  For fixed $s\in(0,1)$ and $t \in [0,1]$, let $T_{s}^{t}$ be the optimal transport map from $\mu_{s}$ to $\mu_{t}$ given in Remark \ref{rem:MM}. 
 
Then $T_{s}^{t}: \Sigma_{s} \to T_{s}^{t}(\Sigma_{s})\subset  M$ is differentiable $\mu_s$-a.e.  and the following Monge-Amp\`ere inequality  holds:
\begin{equation}\label{eq:MAineq}
\rho_s(x)\leq \det [DT_{s}^{t}(x)] \; \rho_t(T_{s}^{t}(x)) \ \ \mbox{ $\mu_s$-a.e. }  x, \; \forall s\in(0,1),  \, \forall t \in [0,1].
\end{equation}
 In particular, $DT_{s}^{t}:\R^{p}\to \R^{p}$ is $\mu_s$-a.e. non-degenerate. 
Moreover \eqref{eq:MAineq} holds with equality if $t,s\in (0,1)$.\end{lemma}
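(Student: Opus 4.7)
My strategy is to combine the Lipschitz regularity of $T_s^t$ provided by Monge--Mather (Theorem \ref{thm:MongeMather} and Remark \ref{rem:MM}) with the area formula for Lipschitz maps between countably $\mathcal{H}^p$-rectifiable subsets of a Riemannian manifold. For the fixed $s\in(0,1)$, the map $T_s^t:\supp\mu_s\to\supp\mu_t$ is Lipschitz, and $\mu_s$ is concentrated on the countably $\mathcal{H}^p$-rectifiable set $\Sigma_s$. Hence Rademacher's theorem on rectifiable sets (applied slice by slice to the Lipschitz $p$-dimensional pieces that $\mathcal{H}^p$-cover $\Sigma_s$, together with Whitney's theorem) yields that the tangential differential $DT_s^t(x):T_x\Sigma_s\to T_{T_s^t(x)}M$ exists for $\mathcal{H}^p$-a.e.\ $x\in\Sigma_s$, hence for $\mu_s$-a.e.\ $x$ since $\mu_s\ll\mathcal{H}^p\llcorner\Sigma_s$. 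Because $T_s^t(\Sigma_s)\subset\Sigma_t$ and both sets are $p$-rectifiable, the image of $DT_s^t(x)$ lies in $T_{T_s^t(x)}\Sigma_t$, and the Jacobian $J_p T_s^t(x):=|\det DT_s^t(x)|$ is intrinsically well-defined (cf.\ Remark \ref{rem:welldefDet}).

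Next, I would apply the area formula on rectifiable sets: for any Borel $\phi\geq 0$,
\begin{equation*}
\int_{\Sigma_s}\phi(T_s^t(x))\,\rho_s(x)\,d\mathcal{H}^p(x)
=\int_{T_s^t(\Sigma_s)}\phi(y)\sum_{x\in(T_s^t)^{-1}(y)}\frac{\rho_s(x)}{J_pT_s^t(x)}\,d\mathcal{H}^p(y),
\end{equation*}
after restricting to $\{J_pT_s^t>0\}$. Pushing forward $\mu_s$ via $T_s^t$ and comparing with $\mu_t=\rho_t\mathcal{H}^p\llcorner\Sigma_t$, one identifies
\begin{equation*}
\rho_t(y)=\sum_{x\in(T_s^t)^{-1}(y)}\frac{\rho_s(x)}{J_pT_s^t(x)}\quad\text{for $\mathcal{H}^p$-a.e.\ }y\in T_s^t(\Sigma_s),
\end{equation*}
from which the Monge--Amp\`ere inequality \eqref{eq:MAineq} is immediate by keeping a single summand. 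For the non-degeneracy, set $N:=\{x\in\Sigma_s:J_pT_s^t(x)=0\}$; the area formula yields $\mathcal{H}^p(T_s^t(N))\leq\int_N J_pT_s^t\,d\mathcal{H}^p=0$, therefore $\mu_t(T_s^t(N))=0$, and since $(T_s^t)_\sharp\mu_s=\mu_t$ we obtain $\mu_s(N)\leq\mu_s((T_s^t)^{-1}(T_s^t(N)))=\mu_t(T_s^t(N))=0$, so $\det DT_s^t\neq 0$ holds $\mu_s$-a.e.

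Finally, for the equality case with $s,t\in(0,1)$, both $T_s^t$ and $T_t^s$ are Lipschitz optimal maps (Remark \ref{rem:MM}) and by uniqueness $T_t^s\circ T_s^t=\mathrm{id}$ $\mu_s$-a.e.\ and $T_s^t\circ T_t^s=\mathrm{id}$ $\mu_t$-a.e. Applying the inequality just established in both directions and using the chain rule $J_pT_t^s(T_s^t(x))\cdot J_pT_s^t(x)=1$ $\mu_s$-a.e., one sandwiches
\begin{equation*}
\rho_s(x)\leq J_pT_s^t(x)\,\rho_t(T_s^t(x))\leq J_pT_s^t(x)\,J_pT_t^s(T_s^t(x))\,\rho_s(x)=\rho_s(x),
\end{equation*}
forcing equality throughout. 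The delicate point I expect to be the main obstacle is the rigorous implementation of the area formula between two rectifiable sets that are merely Lipschitz submanifolds: one has to argue that the tangential differential of $T_s^t$ at a density point of $\Sigma_s$ actually sends the approximate tangent space of $\Sigma_s$ into that of $\Sigma_t$ (so that the determinant is taken between $p$-dimensional spaces), and this requires a careful slicing of $\Sigma_s$ and $\Sigma_t$ into countably many $C^1$ pieces together with the Lusin-type property of Lipschitz maps on rectifiable sets.
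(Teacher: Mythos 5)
Your proof is correct and follows essentially the same strategy as the paper's: Rademacher's theorem on the rectifiable set $\Sigma_s$ for $\mu_s$-a.e.\ differentiability, the area formula combined with the push-forward relation $(T_s^t)_\sharp\mu_s=\mu_t$ for the Monge--Amp\`ere inequality, and the $\mu_s$-essential injectivity of $T_s^t$ at interior times to upgrade to equality. The differences are cosmetic: the paper phrases the area-formula step as an integral inequality over arbitrary Borel $A\subset\Sigma_s$ rather than via your pointwise disintegration of $\rho_t$, it deduces non-degeneracy directly from the established inequality together with $\rho_s>0$ rather than by a separate area-formula argument on $\{J_pT_s^t=0\}$ (your argument is also valid and in fact needs to be placed \emph{before} the disintegration identity, since that identity presupposes $\mu_s(\{J_pT_s^t=0\})=0$), and your chain-rule sandwich for equality is just a repackaging of the essential-injectivity argument.
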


Let us stress that  in the above lemma we do not claim that $T_{s}^{t}$ is $\mu_{s}$-a.e. differentiable as  a map from $M$ to $M$, but just as a map from $\Sigma_{s}$ to its image, i.e. we claim differentiability with respect to infinitesimal variations which are \emph{tangential} to $\Sigma_{s}$.

\begin{proof}
\textbf{Step 1.} Differentiabiliy $\mu_{s}$-a.e..
\\From Theorem \ref{thm:MongeMather}  and  Remark \ref{rem:MM}, we know that $T_{s}^{t}: \Sigma_{s}\to T_{s}^{t}(\Sigma_{s})$ is a Lipschitz map; since by assumption   $\Sigma_{s}$ is countably $\cH^{p}$-rectifiable, Rademacher Theorem implies that $T_{s}^{t}: \Sigma_{s}\to T_{s}^{t}(\Sigma_{s})$ is differentiable $\cH^{p}$-a.e. . \medskip

\textbf{Step 2.}  Monge-Amp\`ere inequality.
\\Since by construction $(T_{s}^{t})_{\sharp}\mu_{s}=\mu_{t}$, it follows that for an arbitrary Borel subset $A\subset \Sigma_{s}$ it holds
\begin{equation}\label{eq:musleqmut}
\mu_{s}(A)\leq \mu_{s}\left((T_{s}^{t})^{-1}(T_{s}^{t}(A)) \right)=\mu_{t}\left(T_{s}^{t}(A)\right). 
\end{equation}
Equality holds for $s,t \in (0,1)$ as the map $T_{s}^{t}$ is $\mu_{s}$-essentially injective. Recalling that $\mu_{s}=\rho_{s} \, \cH^{p}\llcorner \Sigma_{s}$ and $\mu_{t}=\rho_{t} \, \cH^{p} \llcorner \Sigma_{t}$,  by the area formula we infer that 
\begin{align}
\mu_{t}\left(T_{s}^{t}(A)\right)&=\int_{T_{s}^{t}(A)} \rho_{t} \, d \cH^{p}\llcorner \Sigma_{t} \nonumber \\
& \leq  \int_{T_{s}^{t}(A)}\rho_t(y)\mathcal{H}^0((T_{s}^{t}{\llcorner A})^{-1}(y))\, d\mathcal{H}^p\llcorner \Sigma_{t}(y) =\int_A\rho_t(T_{s}^{t}(x))\det\left[ DT_{s}^{t}(x) \right] \,d\mathcal{H}^p \llcorner \Sigma_{s}(x), \label{eq:AFmusmut}
\end{align}
with equality if $s,t\in (0,1)$  as the map $T_{s}^{t}$ is $\mu_{s}$-essentially injective.  The combination of  \eqref{eq:musleqmut} and \eqref{eq:AFmusmut}  gives that for an arbitrary Borel subset $A\subset \Sigma_{s}$ it holds
$$
\int_{A} \rho_{s}\, d\cH^{p}=\mu_{s}(A)\leq \mu_{t}(T_{s}^{t}(A))\leq \int_A\rho_t(T_{s}^{t})\det\left[DT_{s}^{t}\right] \,d\mathcal{H}^p,
$$
and  the Monge-Amp\`ere inequality \eqref{eq:MAineq} follows, with equality for $s,t\in (0,1)$.
\end{proof}

In order to have a more clear notation, in the next lemma we pick $s=1/2$  and consider the Lipschitz map $T^{t}_{1/2}: \Sigma_{1/2} \to \Sigma_{t}$, $t\in [0,1]$, but the same arguments hold for any fixed $s\in (0,1)$.
For $\mu_{1/2}$-a.e. $x \in M$ let  $\gamma_{x}\in \Geo(M)$ be the geodesic defined by  $[0,1]\ni t  \mapsto \gamma_{x}(t):=T^{t}_{1/2}(x)$ and,
for $\mu_{1/2}$-a.e. $x \in \Sigma_{1/2}$ let $v(x)\in T_{x}M$ be such that $\gamma_{x}(t)=\exp_{x}((t-\frac{1}{2})\, v(x))$, that is $v(x)=\dot{\gamma}_x(\frac{1}{2})$.   Denote also with $\nabla_t$ the covariant derivative in $M$ in the direction of $\dot{\gamma }_{x}(t)$  and with   $D_t=\top \circ \nabla_t $ where  $\top:T_{\gamma_{x}(t)} M \to T_{\gamma_{x}(t)}\Sigma_{t}$ is
 the orthogonal projection map.

\begin{lemma}\label{lem:vDiff}
The map $M\ni x \mapsto v(x)\in TM$ is  well defined and differentiable $\mu_{1/2}$-a.e.\ . As a consequence we can find a subset $N\subset \Sigma_{1/2}$ 
(independent of $t \in [0,1]$ ) with $\mu_{1/2}(N)=0$, such that for every $t \in [0,1]$  the map $T^{t}_{1/2}: \Sigma_{1/2}\to \Sigma_{t}$ is differentiable at  every $x \in \Sigma_{1/2}\setminus N$. 
\\Moreover, up to replacing $N$ with a larger set of null $\mu_{1/2}$-measure,  for every  $x \in \Sigma_{1/2}\setminus N$ the map $t\mapsto D_{x}T^{t}_{1/2}: T_{x}\Sigma_{1/2}\to T_{T^{t}_{1/2}(x)}\Sigma_{t}$ is differentiable at $t=1/2$ and $D_{t}|_{t=1/2} D_{x}T^{t}_{1/2}: T_{x}\Sigma_{1/2}\to T_{x}\Sigma_{1/2}$ is self-adjoint.
\end{lemma}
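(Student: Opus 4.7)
The plan starts by realizing $v$ via the Lipschitz transport maps $T^{0}_{1/2}$ and $T^{1}_{1/2}$. By Theorem~\ref{thm:MongeMather} and Remark~\ref{rem:MM}, for $\mu_{1/2}$-a.e.\ $x \in \Sigma_{1/2}$ there is a unique minimizing geodesic $\gamma_x$ from $T^{0}_{1/2}(x)$ to $T^{1}_{1/2}(x)$ with midpoint $x$, so $v(x)=\dot\gamma_x(1/2)$ is well-defined; moreover, since $\gamma_x|_{[1/2,1]}$ is itself a unique minimizer of its length, $T^{1}_{1/2}(x)$ lies outside the cut locus of $x$, hence $\exp_x^{-1}$ is smooth on a neighborhood of $T^{1}_{1/2}(x)$ and $v(x)=2\exp_x^{-1}(T^{1}_{1/2}(x))$. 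Since $T^{1}_{1/2}:\Sigma_{1/2} \to M$ is Lipschitz and $\Sigma_{1/2}$ is countably $\cH^p$-rectifiable, Rademacher's theorem on rectifiable sets yields tangential differentiability of $T^{1}_{1/2}$ at $\cH^p$-a.e.\ point; composition with the smooth map $(x,y)\mapsto \exp_x^{-1}(y)$ then transports this to tangential differentiability of $v$ at $\mu_{1/2}$-a.e.\ $x$.

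Define $N \subset \Sigma_{1/2}$ as the complement of the full-$\mu_{1/2}$-measure set on which $v$ is both defined and tangentially differentiable; this $N$ is manifestly independent of $t$. For any $x \in \Sigma_{1/2}\setminus N$ and any $t\in[0,1]$, the identity $T^{t}_{1/2}(x)=\exp_x\bigl((t-\tfrac 12)v(x)\bigr)$ together with smoothness of $\exp$ and the chain rule gives differentiability of $T^{t}_{1/2}$ at $x$. Fixing $\xi \in T_x\Sigma_{1/2}$ and a tangential curve $c$ with $c(0)=x,\;\dot c(0)=\xi$, the object $J(t):=D_xT^{t}_{1/2}(\xi)=\partial_s|_{s=0}\exp_{c(s)}\bigl((t-\tfrac 12)v(c(s))\bigr)$ is the variation vector field of the smooth one-parameter family of geodesics $\{\gamma_{c(s)}\}$, hence a Jacobi field along $\gamma_x$; in particular $t\mapsto J(t)$ is smooth, with $J(\tfrac12)=\xi$ and (by interchanging $s$- and $t$-derivatives, using that $v\circ c$ is $C^1$ at $s=0$) $\nabla_t J|_{t=1/2}=\nabla_\xi v \in T_xM$. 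Applying the tangential projection, $D_t|_{t=1/2}D_xT^{t}_{1/2}(\xi)=\top\nabla_\xi v$.

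The main obstacle is self-adjointness of $\xi\mapsto \top\nabla_\xi v$ on $T_x\Sigma_{1/2}$. I plan to invoke Kantorovich duality applied to the optimal transport from $\mu_{1/2}$ to $\mu_0$: there exists a $c$-concave (hence locally semi-concave) potential $\phi$ defined in a neighborhood of $\supp\mu_{1/2}$ in $M$, and by Danskin's envelope theorem together with the $\mu_{1/2}$-a.e.\ uniqueness of the transport map $T^0_{1/2}$ provided by Monge-Mather, $\phi$ is differentiable at $\mu_{1/2}$-a.e.\ $y$ with $v(y)=2\nabla\phi(y)$. Decomposing $v=v^\top+v^\perp$ along $T_x\Sigma_{1/2}\oplus(T_x\Sigma_{1/2})^\perp$, one has $v^\top=2\nabla^{\Sigma_{1/2}}(\phi|_{\Sigma_{1/2}})$ (intrinsic gradient). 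Tangential differentiability of $v$, and hence of $v^\top$, at $x$ upgrades to intrinsic twice-differentiability of $\phi|_{\Sigma_{1/2}}$ at $x$, so $\mathrm{Hess}^{\Sigma_{1/2}}(\phi|_{\Sigma_{1/2}})(x)$ is a well-defined symmetric bilinear form on $T_x\Sigma_{1/2}$. Combining the Gauss and Weingarten formulas then gives, for $\xi,\eta \in T_x\Sigma_{1/2}$,
\[
\langle \top\nabla_\xi v,\eta\rangle \;=\; 2\,\mathrm{Hess}^{\Sigma_{1/2}}(\phi|_{\Sigma_{1/2}})(\xi,\eta) \;-\; \langle II(\xi,\eta), v^\perp\rangle,
\]
where $II$ is the second fundamental form of $\Sigma_{1/2}$ in $M$. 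Since both terms on the right are symmetric in $(\xi,\eta)$ (the second because $II$ itself is symmetric), this yields the desired self-adjointness. Enlarging $N$ by the $\mu_{1/2}$-null set where $\phi$ fails to be differentiable, or where the intrinsic second derivative of $\phi|_{\Sigma_{1/2}}$ fails to exist, completes the argument.
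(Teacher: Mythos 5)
Your overall approach runs parallel to the paper's: realize $v$ via the Lipschitz transport maps, use Rademacher's theorem on the rectifiable set $\Sigma_{1/2}$ to differentiate tangentially, and bring in the Kantorovich potential $\phi_{1/2}$ (together with its semi-concavity) for the self-adjointness. The paper, however, obtains differentiability of $T^t_{1/2}$ for all $t$ via a grid/composition argument on small time steps $\delta/2$, instead of jumping directly to $t=1$ as you do; your shortcut works, but the reasoning you give is incomplete: uniqueness of the minimizer $\gamma_x|_{[1/2,1]}$ alone does not exclude $T^1_{1/2}(x)$ being a cut point of $x$ (it could still be conjugate to $x$). What saves you is the stronger classical fact that since $1/2$ is an \emph{interior} parameter of the minimizing geodesic $\gamma_x|_{[0,1]}$, the point $\gamma_x(1)$ lies strictly before the cut point of $\gamma_x(1/2)$ (conjugacy is ruled out by the corner-straightening/second-variation argument). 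You should state this correctly if you keep the shortcut.

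The genuine gap is in the self-adjointness step. Your plan decomposes $\langle\top\nabla_\xi v,\eta\rangle=2\,\mathrm{Hess}^{\Sigma_{1/2}}(\phi|_{\Sigma_{1/2}})(\xi,\eta)-\langle II(\xi,\eta),v^\perp\rangle$ and asserts both terms are symmetric. There are two problems. First, $\Sigma_{1/2}$ is only a countably $\mathcal{H}^p$-rectifiable set; Rademacher's theorem gives $\mathcal{H}^p$-a.e.\ approximate tangent planes, i.e.\ a $C^1$-type structure, but the second fundamental form $II$ requires at least $C^{1,1}$ regularity and is not defined in this setting. Second, even granting the decomposition, the symmetry of $\mathrm{Hess}^{\Sigma_{1/2}}(\phi|_{\Sigma_{1/2}})(x)$ is exactly the assertion in question, not a freebie: tangential differentiability of the gradient at a single point does not imply that the resulting linear map is symmetric for a general function. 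What makes it symmetric is the semi-concavity of $\phi_{1/2}$ combined with an Alexandrov/Rockafellar-type theorem, which you mention in passing but never invoke. The paper's proof sidesteps both issues: it works directly with the ambient covariant derivative, defining $A_x\xi:=(\nabla_\xi v(x))^\top$ (so no $II$ term appears and no intrinsic connection on the rectifiable set is needed), and then explicitly adapts Rockafellar's Theorem 2.8 to the semi-concave $\phi_{1/2}$ to obtain the tangential second-order Taylor expansion which forces $A_x$ to be self-adjoint. If you keep your formulation, you must (a) replace $II$-based reasoning by an argument that lives on the ambient manifold, and (b) actually invoke Rockafellar (or the Alexandrov theorem for semi-concave functions) to justify the symmetry, rather than asserting it.
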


\begin{proof}

\textbf{Step 1}: the map $v(\cdot):\Sigma_{1/2}\setminus N\to TM$ is well defined.
\\In a first instance let $N\subset \Sigma_{1/2}$, with $\mu_{1/2}(N)=0$, be such that for  every $x \in \Sigma_{1/2}\setminus N$  the curve  $t \mapsto \gamma_{x}(t):=T^{t}_{1/2}(x)$ is 
a well defined geodesic.  In particular,  the curve $t \mapsto \gamma_{x}(t)$ is $C^{1}$ and we can set $v(x)= \dot{\gamma}_{x}(\frac{1}{2})$; this is clearly   well defined as a  map from $\Sigma_{1/2}\setminus N$ to $TM$. Note moreover that, since by standing assumption $\mu_{0}$ and $\mu_{1}$ (and therefore all the measures $\mu_{t}$) have compact support,  we have 
\begin{equation}\label{eq:vbounded}
 \mu_{1/2}{\rm-ess\, sup}_{x \in \Sigma_{1/2}} |v(x)| \leq \frac{1}{2}   \sup_{(x,y) \in \supp \mu_{0} \times \supp {\mu_{1}}} \sfd(x,y) =: C_{\mu_{0},\mu_{1}} < \infty. 
 \end{equation}

\textbf{Step 2}: the map $v(\cdot):\Sigma_{1/2}\setminus N\to TM$ is differentiable.
\\First of all, note that  there exists $\delta>0$ small enough so that $T_{x}M \supset B_{C_{\mu_{0},\mu_{1}}}(0)  \ni w \mapsto \exp_{x}(tw)$ is a diffeomorphism onto its image for
 every $t \in (-\delta,\delta)$ and every $x \in \Sigma_{1/2}$.
 Fix $x_{0}\in \Sigma_{1/2}\setminus N$.  Since by Lemma  \ref{B} the map $T_{1/2}^{(1+\delta)/2}$ is differentiable $\mu_{1/2}$-a.e.,  it follows that also the map
\begin{equation*}
B_{\delta C_{\mu_{0}, \mu_{1}}}(x_{0})\cap \Sigma_{1/2} \setminus N \to TM, \qquad x \mapsto v(x):= \frac{2}{\delta} \exp_{x}^{-1} \left(T_{1/2}^{(1+\delta)/2}(x)\right)
\end{equation*}
is differentiable $\mu_{1/2}$-a.e.. Therefore, up to redefining  the $\mu_{1/2}$-negligible set $N$, the claim is proved. Notice that  in particular the map
\begin{equation}\label{eq:diff12}
T_{1/2}^{t}(x)=\exp_{x}(t v(x))\quad   \text{is differentiable everywhere on $\Sigma_{1/2}\setminus N$ for every $t \in \Big(\frac{1-\delta}{2}, \frac{1+\delta}{2}\Big)$}.
\end{equation}

\textbf{Step 3}: the map $T^{t}_{1/2}:\Sigma_{1/2}\setminus N\to \Sigma_{t}$ is differentiable for every $t \in [0,1]$.
\\By construction we have that  $T^{t}_{1/2}(x)=\exp_{x}(t v(x))$ and, using again that $\mu_{0}$ and $\mu_{1}$ have compact support, we know that there exists a compact subset $E\subset \subset M$ such that $T_{1/2}^{t} (\Sigma_{1/2})\subset E$ for every $t \in [0,1]$. In particular, there exists $\delta>0$ small enough such that, for every $x_{0} \in E$,  the exponential map 
$$\exp_{(\cdot)}(\cdot): B_{\delta  C_{\mu_{0},\mu_{1}}}(x_{0})\times B_{\delta C_{\mu_{0},\mu_{1}}}(0)\to M$$
 is smooth,  where $C_{\mu_{0},\mu_{1}}$ was defined in \eqref{eq:vbounded}.
 \\Let $t_{j}:= \frac{1}{2}+ \frac{\delta}{2} j $, for $j=-\lfloor \frac{1}{\delta} \rfloor, \ldots, 0, \ldots,  \lfloor \frac{1}{\delta} \rfloor$, be a $\frac{\delta}{2}$-grid in $[0,1]$ centered at $1/2$; for convenience choose $\delta \notin \Q$ so that $\frac{1}{2}+ \frac{\delta}{2} \lfloor \frac{1}{\delta} \rfloor <1$. By repeating the same argument of step 2 and replacing $1/2$ by $t_{j}$ in \eqref{eq:diff12},  we get that  for every $j=-\lfloor \frac{1}{\delta} \rfloor, \ldots, 0, \ldots,  \lfloor \frac{1}{\delta} \rfloor$ there exists a subset $N_{j}\subset \Sigma_{t_{j}}$ with $\mu_{t_{j}}(N_{j})=0$ such that $T_{t_{j}}^{t}$ is differentiable everywhere on $\Sigma_{t_{j}}\setminus N_{j}$ for every $t \in (t_{j-1}, t_{j+1})$.
 \\Since by Lemma \ref{B} the maps $T_{t_{i}}^{t_{i+1}}: \supp \mu_{t_{i}} \to \supp \mu_{t_{i+1}}$ are bi-Lipschitz and since $\mu_{t}$ is equivalent to $\cH^{p}\llcorner (\Sigma_{t}\cap \{\rho_{t}>0\})$ for every $t \in [0,1]$, we get that
 $$
 N_{+}:= N_{0} \cup \left[ (T_{1/2}^{t_{1}})^{-1} (N_{1}) \right]  \cup  \left[  (T_{t_{1}}^{t_{2}} \circ T_{1/2}^{t_{1}})^{-1} (N_{2}) \right] \cup \ldots  \cup  \left[ ( T^{t}_{ \lfloor \frac{1}{\delta} \rfloor }\circ \ldots \circ  T_{t_{1}}^{t_{2}} \circ T_{1/2}^{t_{1}})^{-1} (N_{ \lfloor \frac{1}{\delta} \rfloor }) \right]
 $$
 satisfies $\mu_{1/2}(N_{+})=0$.  Defining analogously $N_{-}$ by considering $t_{j}\leq \frac{1}{2}$ and setting $N=N_{+}\cup N_{-}$ we get that $\mu_{1/2}(N)=0$. 
 \\Fix now an arbitrary $t \in [1/2,1]$ and  let $j_{0}:=\max\{j \,:\, t_{j} \leq t \}$. Since  we can write $T_{1/2}^{t}=T^{t}_{t_{j_{0}}}\circ \ldots \circ  T_{t_{1}}^{t_{2}} \circ T_{1/2}^{t_{1}}$, it follows that 
 $T_{1/2}^{t}:\Sigma_{1/2}\setminus N\to \Sigma_{t}$ is differentiable everywhere, as composition of differentiable functions. The argument for $t \in [-1, 1/2]$ is completely analogous, so the lemma is proved.
 
  \textbf{Step 4}:  the map $t\mapsto D_{x}T^{t}_{1/2}: T_{x}\Sigma_{1/2}\to T_{T_{t}^{1/2}(x)}\Sigma_{t}$ is differentiable at $t=1/2$ and  $D_{t}|_{t=1/2} D_{x}T^{t}_{1/2}: T_{x}\Sigma_{1/2}\to T_{x}\Sigma_{1/2}$ is self-adjoint.
 
 Recall that the transport geodesics $\gamma_{x}(t):=T_{1/2}^{t}(x)$ are precisely the gradient flow curves of the corresponding Hamilton-Jacobi shift $\phi_t={\rm H}_{t}\phi$ of a $\sfd^{\scriptscriptstyle 2}$-Kantorovich potential $\phi$ from $\mu_{0}$ to $\mu_{1}$. Note in particular that, for $t\in (0,1]$, $\phi_t$ is semi-concave.
Moreover,  $\phi_{1/2}$ is differentiable in $x$ and admits a gradient $\nabla \phi_{1/2}$ in the classical sense with $\nabla \phi_{1/2}(x)= \dot{\gamma}_{x}(1/2)$  (page 61 in \cite{pet}).
In particular, $\phi_{1/2}$ is differentiable $\mu_{1/2}$-a.e.  .

By construction, $\nabla \phi_{1/2}$ coincides  $\mu_{1/2}$-a.e. with the vector field $v$ defined above  and $v$ is
also differentiable on $\Sigma_{1/2}\setminus N$ as vector field along $\Sigma_{1/2}$. For every $x\in \Sigma_{1/2}\setminus N$  denote $A_{x}:T_{x} \Sigma_{1/2}\to T_{x} \Sigma_{1/2}$,  $A_{x}w:= D_wv(x):=(\nabla_{w} v(x))^{\top}$.
For any fixed $x\in \Sigma_{1/2}\setminus N$, thanks to the semi-concavity of $\phi_{1/2}$  and the differentiability of $v:\Sigma_{1/2}\backslash N\rightarrow TM$
we can follow the proof of  \cite[Theorem 2.8]{rock} to show
that the second derivatives of $\phi_{1/2}$ in $x$ tangential to $\Sigma_{1/2}$ exist, and the following Taylor expansion holds for every curve $\lambda(t)\in \Sigma_{1/2}$ with $\lambda(0)=x$ and $\dot{\lambda}(0)=w\in T_x\Sigma_{1/2}$:
\begin{align}\label{eq:taylorphi}
 \phi_{1/2}(\lambda(t))=\phi_{1/2}(\lambda(0))+t\langle \nabla \phi_t(\lambda(0)),w\rangle+ \frac{t^{2}}{2}\langle A_{x}w,w\rangle+o(t^2).
\end{align}
Though \cite{rock} only considers the case of convex functions in $\mathbb{R}^n$, it is clear that the proof works as well in the context of Riemannian manifolds and semi-concave functions.
Finally, the Taylor expansion \eqref{eq:taylorphi} implies that $A_{x}$ must be self-adjoint.
\\In order to conclude the proof, observe that  $(0,1)\ni t\mapsto D T_{1/2}^t (x)$ is $C^{1}$ and that, by the symmetry of second order derivatives of distributions, for $\mu_{1/2}$-a.e. $x\in \Sigma_{1/2}$ it holds
$$D_t|_{t=1/2}(D T_{1/2}^t (x))=D(\nabla_t|_{t=1/2} T_{1/2}^t(x))=:Dv:=A_{x}.$$
\end{proof}

\begin{remark}\label{rem:defB}
Let $\{\mu_t\}_{t\in [0,1]}$ be a countably $\mathcal{H}^p$-rectifiable $W_2$-geodesic, $s\in (0,1)$, and $x\in \Sigma_s\setminus N$ where $N\subset \Sigma_s$ with $\mu_s(N)=0$
is given by Lemma \ref{lem:vDiff}. Since 
a countably $\cH^{p}$-rectifiable set has $p$-dimensional euclidean tangent spaces $\cH^{p}$-a.e., without loss of generality we can assume that for
every $x \in \Sigma_{s}\setminus N$ it holds $\dim T_{x}\Sigma_{s}={p}$.
Choose an orthonormal basis   $(e_1,\dots,e_p)$ of $T_x \Sigma_{s}$ and consider the vector fields $J_1,\dots,J_p:[0,1]\rightarrow T_{\gamma_{x}(t)}M$ along 
the geodesic $\gamma_{x}:[0,1]\rightarrow M$ defined by
\begin{equation*}
J_{i}(t):=\left(DT_{s}^{t}(x)\right) [e_{i}]=\left(D( \exp_{(\cdot)}(t v(\cdot))) (x) \right) [e_{i}], \quad \forall i = 1\ldots,p,  \quad \forall t \in [0,1]
\end{equation*}
where $v(x)$ was defined before in Lemma \ref{lem:vDiff}.
A standard computation of Riemannian geometry shows that the map $t\mapsto J_{i}(t)$ satisfies the Jacobi equation 
\begin{align*}
\nabla_t\nabla_t J_i+R(\dot{\gamma}_x, J_i)\dot{\gamma}_x=0,  \quad \forall  i=1,\dots,p, \quad \mbox{on }[0,1],
\end{align*}
where $\nabla_t$ is the covariant derivative of vector fields along $\gamma_x$ at the point $\gamma_{x}(t)$. In other words, $J_i$ is a Jacobi field. We then set 
\begin{equation}\label{eq:defBxt}
{B}_{x}(t): T_{x} \Sigma_{s} \to  T_{\gamma_{x}(t)}M, \quad {B}_{x}(t):=DT_{s}^{t}(x)
\quad \forall t\in [0,1], \; \forall x \in \Sigma_{s}\setminus N.
\end{equation} 
The combination of Lemma \ref{B} and Lemma \ref{lem:vDiff} yields that $B_x(t)$ is non-degenerate for every $t\in 
[0,1]$ for $\mu_s$-almost every $x\in \Sigma_s$. 
So in particular, for $\mu_s$-a.e. $x$ we have that $\dim[ \mbox{Im}[DT_{s}^t(x)]]=p$ and $\left\{J_i(t)\right\}_{i=1,\dots,p}$ is a basis of $\mbox{Im}[DT_s^t(x)]$ 
for 
every$t\in
[0,1]$. We can  (and will) consider $B_x(t)$ as a map from $T_x\Sigma_{s}$ to $T_{\gamma_{x}(t)}\Sigma_t$. Finally, we also proved that $D_{t} B_x(t)|_{t=s}:T_x\Sigma_{s} \to T_x\Sigma_{s}$ is self-adjoint for  $\mu_s$-almost every $x\in \Sigma_s$.
\end{remark}

\section{Jacobi fields computations}\label{sec:Jacobi}
\noindent
Let $(M,g)$ be a complete Riemannian manifold without boundary, and let $\gamma:[0,1]\rightarrow M$ be a minimizing, constant speed geodesic with $\gamma(0)=x$. Moreover, let $\{e_i\}_{i=1,\dots,p}$ be orthonormal vectors in $T_xM$, 
and let $J_{e_i}:[0,1]\rightarrow TM$ be non-vanishing Jacobi fields along $\gamma$ with $J_i(0)=e_i$
and  $J_i'(0)=f_i$, for some $f_{i}\in T_{x}M$ to be specified later. We denote with $T_{\gamma(t)}\Sigma_t\subset T_{\gamma(t)}M$ the span of $\left\{J_{e_i}(t)\right\}_{i=1,\dots,p}$ for each $t\in [0,1]$, and with 
$v^{\top}$ the orthogonal projection of a vector $v\in T_{\gamma(t)}M$ to the subspace $T_{\gamma(t)}\Sigma_t$. Similarly, $v^{\perp}$ is its projection to the orthogonal complement  $(T_{\gamma(t)}\Sigma_t)^{\perp}$ of $T_{\gamma(t)}\Sigma_t$. We also denote with $\top:T_{\gamma(t)}M \to T_{\gamma(t)}\Sigma_t$ 
 the orthogonal projection map. 
\begin{lemma}\label{lem:Ei}
Define the vector fields $E_i:[0,1]\rightarrow T_{\gamma(t)}\Sigma_t$, $i=1,\dots,p$, along $\gamma$ with values in $\bigcup_{t\in [0,1]}T_{\gamma(t)}\Sigma_t$
as the solution of 
\begin{equation}\label{eq:defEi}
(\nabla_t E_i)^{\top}=0, \quad \text{with  }E_i(0)=e_i,
\end{equation}
where $\nabla_t$ is the covariant derivative of vector fields along $\gamma$ at the point $\gamma(t)$.
Then $\left\{E_i(t)\right\}_{i=1,\dots,p}$ is an orthonormal basis for $T_{\gamma(t)}\Sigma_t$ for every $t\in [0,1]$.
\end{lemma}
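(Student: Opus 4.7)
The plan is to interpret the equation $(\nabla_t E_i)^{\top}=0$ as a tangential analogue of parallel transport along $\gamma$, restricted to the time-varying subbundle $t\mapsto T_{\gamma(t)}\Sigma_t \subset T_{\gamma(t)}M$. Existence/uniqueness, membership in $\Sigma_t$, and orthonormality will then follow in turn.

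First I would establish existence and uniqueness of $E_i$ by reducing \eqref{eq:defEi} to a linear ODE. Since the Jacobi fields $\{J_{e_j}(t)\}_{j=1,\dots,p}$ are assumed non-vanishing and span $T_{\gamma(t)}\Sigma_t$, I write the sought solution as $E_i(t)=\sum_{j=1}^{p} a_{ij}(t)\, J_{e_j}(t)$ with unknown coefficients $a_{ij}(t)$ and initial data $a_{ij}(0)=\delta_{ij}$. Expanding
\begin{align*}
(\nabla_t E_i)^{\top}=\sum_{j} \dot{a}_{ij}\, J_{e_j}(t) + \sum_{j} a_{ij}\,(\nabla_t J_{e_j}(t))^{\top}
\end{align*}
and writing $(\nabla_t J_{e_j}(t))^{\top}=\sum_{k} c_{jk}(t)\,J_{e_k}(t)$ for smooth coefficients $c_{jk}$ obtained from decomposing the covariant derivatives of the Jacobi fields in the basis $\{J_{e_k}(t)\}$, equation \eqref{eq:defEi} becomes a linear ODE $\dot{a}_{ik} + \sum_{j} a_{ij}\,c_{jk} = 0$ for the matrix $(a_{ij}(t))$. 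Standard ODE theory yields a unique smooth solution on $[0,1]$, and by construction each $E_i(t)$ lies in $T_{\gamma(t)}\Sigma_t$.

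Next I would verify orthonormality. Using metric-compatibility of the Levi-Civita connection,
\begin{align*}
\frac{d}{dt}\langle E_i(t), E_j(t) \rangle = \langle \nabla_t E_i, E_j \rangle + \langle E_i, \nabla_t E_j \rangle.
\end{align*}
Since $E_j(t) \in T_{\gamma(t)}\Sigma_t$, one has $\langle \nabla_t E_i, E_j\rangle = \langle (\nabla_t E_i)^{\top}, E_j\rangle$, which vanishes by \eqref{eq:defEi}; the symmetric term vanishes likewise. Hence $\langle E_i(t), E_j(t)\rangle$ is constant in $t$, equal to its initial value $\langle e_i, e_j\rangle = \delta_{ij}$. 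This shows $\{E_i(t)\}_{i=1,\dots,p}$ is an orthonormal system; since it is built inside $T_{\gamma(t)}\Sigma_t$ (of dimension $p$), it is automatically a basis.

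There is no real obstacle: the key observation is that although the subbundle $t\mapsto T_{\gamma(t)}\Sigma_t$ is not parallel in $M$ (so $\nabla_t$ alone does not preserve it), the projected equation $(\nabla_t E_i)^{\top}=0$ defines a metric connection on this subbundle, because the ``normal leakage'' $\nabla_t E_i - (\nabla_t E_i)^{\top}$ is orthogonal to the tangential vectors against which one pairs when differentiating $\langle E_i, E_j\rangle$. The only implicit assumption I would flag is that the Jacobi fields $J_{e_j}(t)$ remain linearly independent along $\gamma$, so that $\dim T_{\gamma(t)}\Sigma_t = p$ throughout; in the applications of this lemma this is guaranteed by the non-degeneracy of $B_x(t)$ established in Lemma \ref{B} and Remark \ref{rem:defB}.
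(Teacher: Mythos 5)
Your proposal is correct and follows essentially the same route as the paper: the orthonormality computation via $\frac{d}{dt}\langle E_i,E_j\rangle = \langle (\nabla_t E_i)^{\top},E_j\rangle + \langle E_i,(\nabla_t E_j)^{\top}\rangle = 0$ is identical, and the existence step, which the paper dismisses as a standard linear ODE system, is the one you spell out explicitly by expanding $E_i$ in the Jacobi-field frame $\{J_{e_j}(t)\}$. Your closing remark about needing $\dim T_{\gamma(t)}\Sigma_t=p$ (guaranteed by non-degeneracy of $B_x(t)$) is a correct and worthwhile observation, implicit in the paper's setup.
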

\begin{proof}
The existence and uniqueness of $E_i:[0,1]\rightarrow T_{\gamma(t)}\Sigma_t$ solving \eqref{eq:defEi} is standard as it corresponds to solve a system of first order linear homogeneous ODEs with Cauchy conditions.
By definition of $E_i$, $i=1,\dots,p$ we have 
\begin{align*}
\frac{d}{dt}\langle E_i,E_j\rangle= \langle \nabla_t E_i,E_j\rangle + \langle E_i,\nabla_t E_j\rangle = \langle (\nabla_t E_i)^{\top},E_j\rangle + \langle E_i,(\nabla_t E_j)^{\top}\rangle = 0.
\end{align*}
Hence, $\langle E_i,E_j\rangle$ is constant along $\gamma$, and since $E_i(0)=e_i$, $i=1,\dots,p$, is an orthonormal basis of $T_{\gamma_0}\Sigma_0$ the claim follows.
\end{proof}
In the following we denote $D_t:=\top\circ\nabla_t$. For $E_i$ as in  Lemma \ref{lem:Ei},  by construction we have $D_tE_i=0$.

Let $B(t):T_{\gamma(0)}\Sigma_0\rightarrow T_{\gamma(t)}M$ be the $1$-parameter family of linear maps defined via $B(t)e_i=J_{e_i}(t)$, and consider  $\nabla_tB(t):T_{\gamma_0}\Sigma_0\rightarrow T_{\gamma(t)}M$ given by $(\nabla_tB(t))e_i=\nabla_tJ_{e_i}$. 
If we consider $B(t)$ as a map from $T_{\gamma(0)}\Sigma_0$ to $T_{\gamma(t)}\Sigma_t$, its derivative $D_t{B}(t)$ defined by $\left[D_t{B}(t)\right]e_i= D_tJ_{e_i}$ 
is a map from $T_{\gamma_0}\Sigma_0$ to $T_{\gamma(t)}\Sigma_t$ 
as well.
Moreover, since $\left\{J_{e_i}\right\}_{i=1,\dots,p}$ are Jacobi fields in $M$, the Jacobi equation yields
\begin{align}\label{eq:jacobi}
\nabla_t\nabla_tB(t) + R(\dot{\gamma}_t,B(t))\dot{\gamma}_t=0.
\end{align}

In the rest of the section  we are going to work under the assumption that  $B(t):T_{\gamma(0)}\Sigma_0 \to T_{\gamma(t)}\Sigma_t$ is invertible for all $t\in [0,1]$, in fact that will be satisfied in the optimal transport  application of the next section thanks to Lemma \ref{B}.  It will be  convenient to  consider the  operators:
\begin{align}
\mathcal{U}(t)&:= (\nabla_tB(t)){B}(t)^{-1}: T_{\gamma(t)}\Sigma_t\rightarrow T_{\gamma(t)}M  \label{eq:CalU}\\
{\mathcal{U}}^{\top}(t)&:=[\mathcal{U}(t)]^{\top}= (D_t{B}(t)){B}(t)^{-1}:T_{\gamma(t)}\Sigma_t\rightarrow T_{\gamma(t)}\Sigma_t. \label{eq:CalUTop} \\
{\mathcal{U}}^{\perp}(t)&:=[\mathcal{U}(t)]^{\perp}:T_{\gamma(t)}\Sigma_t\rightarrow (T_{\gamma(t)}\Sigma_t)^{\perp}. \label{eq:CalUperp} 
\end{align}

\begin{lemma}\label{prop:comp}
Let $J_i:=J_{e_{i}}$ and $E_i$, $i=1,\dots,p$ be as above.
Then
\begin{align*}
T_{\gamma(t)}\Sigma_t^{\perp}\ni \nabla_t E_i(t)= \mathcal{U}^{\perp}(t)E_i.
\end{align*}
\end{lemma}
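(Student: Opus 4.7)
The plan is to expand $E_i(t)$ in the moving basis $\{J_1(t),\dots,J_p(t)\}$ of $T_{\gamma(t)}\Sigma_t$, apply the covariant derivative along $\gamma$, and then split the result into tangential and normal components.

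First, since $B(t)$ is invertible on $[0,1]$ (by the standing assumption of this section, which is supplied by Lemma \ref{B} in the OT application), the vectors $J_1(t),\dots,J_p(t)$ form a basis of $T_{\gamma(t)}\Sigma_t$, so I can write $E_i(t)=\sum_{j=1}^p c_{ij}(t)\,J_j(t)$ for uniquely determined smooth coefficients $c_{ij}:[0,1]\to\mathbb{R}$; equivalently, $E_i(t)=B(t)w_i(t)$ with $w_i(t):=B(t)^{-1}E_i(t)=\sum_j c_{ij}(t)e_j\in T_{\gamma(0)}\Sigma_0$, a curve in the fixed vector space $T_{\gamma(0)}\Sigma_0$.

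Second, I differentiate along $\gamma$ using the Leibniz rule for the covariant derivative:
\begin{equation*}
\nabla_t E_i \;=\; \sum_{j=1}^p \dot c_{ij}(t)\,J_j(t) \;+\; \sum_{j=1}^p c_{ij}(t)\,\nabla_t J_j(t).
\end{equation*}
The first sum visibly lies in $T_{\gamma(t)}\Sigma_t$. The second equals $(\nabla_t B(t))w_i(t)=(\nabla_t B(t))B(t)^{-1}E_i(t)=\mathcal{U}(t)E_i$ by the definition \eqref{eq:CalU} of $\mathcal{U}$.

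Third, I take the orthogonal decomposition with respect to $T_{\gamma(t)}\Sigma_t\oplus (T_{\gamma(t)}\Sigma_t)^\perp$: the tangential piece of the first sum is itself, and the tangential piece of $\mathcal{U}(t)E_i$ is $\mathcal{U}^\top(t)E_i$ by \eqref{eq:CalUTop}, while the normal piece of the first sum vanishes and the normal piece of $\mathcal{U}(t)E_i$ is $\mathcal{U}^\perp(t)E_i$ by \eqref{eq:CalUperp}. Hence
\begin{equation*}
(\nabla_t E_i)^\top \;=\; \sum_{j=1}^p \dot c_{ij}(t)J_j(t) + \mathcal{U}^\top(t)E_i, \qquad (\nabla_t E_i)^\perp \;=\; \mathcal{U}^\perp(t)E_i.
\end{equation*}
By the defining equation \eqref{eq:defEi} of $E_i$, the left-hand side of the first identity vanishes, so $\nabla_t E_i=(\nabla_t E_i)^\perp=\mathcal{U}^\perp(t)E_i\in (T_{\gamma(t)}\Sigma_t)^\perp$, which is the claim.

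There is no real obstacle: the argument is a one-line bookkeeping computation once one recognizes that writing $E_i(t)=B(t)w_i(t)$ turns $\nabla_t E_i$ into a sum of a manifestly tangential term and $\mathcal{U}(t)E_i$, whose normal part is precisely $\mathcal{U}^\perp(t)E_i$.
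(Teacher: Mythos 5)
Your proof is correct, and it is essentially the same change-of-basis bookkeeping as in the paper, but carried out in the opposite direction: you expand $E_i$ in the moving basis $\{J_j\}$ (equivalently $E_i = B(t)w_i(t)$) and differentiate that, whereas the paper expands $J_i = \sum_j \langle J_i, E_j\rangle E_j$ in the orthonormal frame $\{E_j\}$, differentiates, and then applies $A^{-1}$ with $A_{ij} = \langle J_i, E_j\rangle$ to isolate $\nabla_t E_k$. Your direction is slightly slicker: the term $\mathcal{U}(t)E_i$ appears directly via $(\nabla_t B)B^{-1}E_i$ without any metric computation or matrix inversion, and the defining equation $(\nabla_t E_i)^\top=0$ is used only once at the very end to kill the tangential piece. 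Both proofs rely on the same ingredients (Leibniz rule, invertibility of $B(t)$ so that $\{J_j(t)\}$ spans $T_{\gamma(t)}\Sigma_t$, and the defining equation \eqref{eq:defEi}), so I would not call this a genuinely different route, just a cleaner ordering of the same steps.
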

\begin{proof}
First, we write $J_i=\sum_{j=1}^p\langle J_i,E_j\rangle E_j$ and set $A_{ij}=\langle J_i,E_j\rangle$ where the matrix $A:=(A_{ij})_{i,j}\in GL_n(\mathbb{R})$. Let $A^{-1}$  be its inverse.
We compute 
\begin{align*}
\nabla_tJ_i(t)=\sum_{j=1}^p \langle \nabla_t J_i(t),E_j(t)\rangle E_j(t) + \sum_{j=1}^p\langle J_i(t),\nabla _tE_j(t)\rangle E_j(t) + \sum_{j=1}^p\langle J_i(t),E_j(t)\rangle \nabla_tE_j(t),
\end{align*}
where the second sum on the right hand side vanishes since $(\nabla_tE_j(t))^{\top}=0$. Rearranging terms and multiplying by $A^{-1}$ yields for $k=1,\dots ,p$
\begin{align*}
\sum_{i=1}^p (A^{-1})_{ki} (\nabla_t J_i(t))^{\perp}&=\sum_{i=1}^p (A^{-1})_{ki}  \left[\nabla_tJ_i(t)-\sum_{j=1}^p \langle \nabla_t J_i(t),E_j(t)\rangle E_j(t)\right]  \\
&=  \sum_{i=1}^p (A^{-1})_{ki}  \left[ \sum_{j=1}^p \langle J_i(t),E_j(t)\rangle   \nabla_t E_j(t)   \right]  = \sum_{i=1}^p\sum_{j=1}^p (A^{-1})_{ki} A_{ij} \nabla_t E_j(t)= \nabla_t E_k(t).
\end{align*}
Now, we recall that $\nabla_t J_i= \nabla _tJ_{E_i}=\nabla _tJ_{B^{-1}(t)J_i}=\mathcal{U}(t)J_i$. Therefore 
\begin{align*}
\nabla_t E_k(t) =\sum_{i=1}^p (A^{-1})_{ki}\, (\nabla_t J_i(t))^{\perp} = \sum_{i=1}^p (A^{-1})_{ki}\, \left[ \mathcal{U}(t)J_i \right]^{\perp} =\left[ \mathcal{U}(t) \left(\sum_{i=1}^p   (A^{-1})_{ki}   J_i\right) \right]^{\perp} =[\mathcal{U}(t)E_k(t)]^{\perp},
\end{align*}
as desired.
\end{proof}

\begin{lemma}
Let ${B}(t):T_{\gamma(0)}\Sigma_0\rightarrow T_{\gamma(t)}\Sigma_t$, $t\in [0,1]$, be as above, and ${B}(t)^{-1}:T_{\gamma(t)}\Sigma_t\rightarrow T_{\gamma_0}\Sigma_0$. Then 
\begin{align}\label{eq:DtB}
D_t[{B}(t)^{-1}]= -{B}(t)^{-1} (D_t{B}(t)){B}(t)^{-1}.
\end{align}
\end{lemma}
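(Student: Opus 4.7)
The identity is the familiar derivative-of-the-inverse formula, and the plan is to differentiate the defining relation $B(t)\circ B(t)^{-1}=\mathrm{Id}_{T_{\gamma(t)}\Sigma_t}$ and solve for $D_t[B(t)^{-1}]$, taking care of the fact that $D_t=\top\circ\nabla_t$ and that $B(t)$ maps between the time-varying subspaces $T_{\gamma(0)}\Sigma_0$ and $T_{\gamma(t)}\Sigma_t$.

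First I would fix the interpretation of $D_t[B(t)^{-1}]$. Since $B(t)^{-1}$ lands in the fixed vector space $T_{\gamma(0)}\Sigma_0$, it is natural to declare $D_t[B(t)^{-1}]$ to be the unique family of linear maps $T_{\gamma(t)}\Sigma_t\to T_{\gamma(0)}\Sigma_0$ satisfying the Leibniz rule
$$\frac{d}{dt}[B(t)^{-1}X(t)]=D_t[B(t)^{-1}](X(t))+B(t)^{-1}(D_tX(t))$$
for every smooth vector field $X(t)\in T_{\gamma(t)}\Sigma_t$ along $\gamma$. Equivalently, $D_t[B(t)^{-1}](X)$ is the ordinary derivative at time $t$ of $s\mapsto B(s)^{-1}\tilde X(s)$ where $\tilde X$ is a $D_t$-parallel extension of $X$; the consistency of these two definitions is immediate once one expresses $\tilde X(s)$ in the basis $\{E_i(s)\}$ of Lemma \ref{lem:Ei}.

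Next I would establish the companion Leibniz rule for $B(t)$ applied to a curve $u(t)\in T_{\gamma(0)}\Sigma_0$. Writing $u(t)=\sum_i c_i(t)e_i$ in a fixed basis of $T_{\gamma(0)}\Sigma_0$ and using the definition $(D_tB(t))e_i:=D_tJ_{e_i}=D_t[B(t)e_i]$ together with the fact that $J_i(t)\in T_{\gamma(t)}\Sigma_t$ (so $\top J_i=J_i$), a direct computation gives
$$D_t[B(t)u(t)]=(D_tB(t))\,u(t)+B(t)\,\dot{u}(t).$$
Applying this to the identity $B(t)(B(t)^{-1}X(t))=X(t)$ with $u(t):=B(t)^{-1}X(t)$ yields
$$D_tX(t)=(D_tB(t))\,B(t)^{-1}X(t)+B(t)\,\frac{d}{dt}[B(t)^{-1}X(t)].$$

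Finally, composing with $B(t)^{-1}$ and substituting the resulting expression for $\tfrac{d}{dt}[B(t)^{-1}X(t)]$ into the first Leibniz rule, the terms $B(t)^{-1}(D_tX(t))$ cancel and one is left with
$$D_t[B(t)^{-1}](X(t))=-B(t)^{-1}(D_tB(t))\,B(t)^{-1}(X(t)),$$
which is the claimed operator identity since $X(t)$ is arbitrary. The only step that deserves real attention is the Leibniz rule for the composition $B\circ B^{-1}$, since the target subspace $T_{\gamma(t)}\Sigma_t$ varies with $t$ and $\nabla_t$ is a priori an ambient-manifold derivative; however, this is essentially a definitional matter once one fixes a basis, and no genuine analytic obstacle arises.
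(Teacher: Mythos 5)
Your proof is correct and takes essentially the same route as the paper: differentiate the relation $B(t)B(t)^{-1}=\mathrm{Id}$ via a Leibniz rule for $D_t$ and solve for $D_t[B(t)^{-1}]$. The only cosmetic difference is that the paper tests the identity on the $D_t$-parallel frame $\{E_i(t)\}$ of Lemma~\ref{lem:Ei} (so that $D_tE_i=0$ kills the extra term at once), whereas you run the argument for an arbitrary vector field $X(t)$ and let the $B^{-1}D_tX$ terms cancel; your preliminary discussion of what $D_t[B(t)^{-1}]$ means for an operator into the fixed space $T_{\gamma(0)}\Sigma_0$ is a harmless elaboration of what the paper leaves implicit.
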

\begin{proof}
Let $\left\{E_i\right\}_{i=1,\dots,p}$ be as in the previous lemma. Then, we obviously have ${B}(t){B}(t)^{-1}E_i(t)=E_i(t)$ for any $i=1,\dots,p$. Applying $D_t$ yields
\begin{align*}
(D_t{B}(t)) {B}(t)^{-1}E_i(t) + {B}(t) (D_t ({B}(t)^{-1})) E_i(t) + {B}(t){B}(t)^{-1}(D_tE_i(t))=D_tE_i(t)=0.
\end{align*}
Rearranging the terms and applying ${B}(t)^{-1}$ from the left of both sides yields the claim.
\end{proof}

The next proposition expresses the ``$p$-dimensional volume distortion'' along the geodesic $\gamma$ in terms of the $p$-Ricci curvature  and will be crucial for proving the characterization of lower curvature bounds in terms of optimal transport in the next section.   

\begin{proposition}\label{cor:imp}
Let ${\mathcal{U}}(t), {\mathcal{U}}(t)^{\top}, {\mathcal{U}}(t)^{\perp}$ be defined in \eqref{eq:CalU}, \eqref{eq:CalUTop}, \eqref{eq:CalUperp}. Then it holds
\begin{align*}
\nabla_t {\mathcal{U}}(t) + \mathcal{U}(t){\mathcal{U}}^{\top}(t) + R(\dot{\gamma}_t,\cdot)\dot{\gamma}_t=0.
\end{align*}
Taking the trace along  $T_{\gamma(t)}\Sigma_t$ yields
$$
\tr(D_t {\mathcal{U}}(t)) + \tr(({\mathcal{U}}^{\top}(t))^2) + \ric_p(T_{\gamma(t)}\Sigma_t,\dot{\gamma})=0,
$$
and moreover
\begin{align}\label{eq:truT'}
\tr(\mathcal{U}^{\top}(t))'+ \tr(({\mathcal{U}}^{\top}(t))^2) + \ric_p(T_{\gamma(t)}\Sigma_t,\dot{\gamma}(t))= \|\mathcal{U}^{\perp} (t)\|^{2}.   
\end{align}
If $D_{t}B(t)|_{t=0}:T_{\gamma(0)}\Sigma_0 \to T_{\gamma(0)}\Sigma_0$ is self-adjoint then  ${\mathcal{U}}^{\top}(t): T_{\gamma(t)}\Sigma_t \to T_{\gamma(t)}\Sigma_t$ is self-adjoint for all $t \in [0,1]$ and, setting $y(t)=\log\det B(t)$, it holds
\begin{equation}\label{eq:eqdiffyx}
y''(t)+ \frac{1}{p}y'(t)^2  +  \ric_p(T_{\gamma(t)}\Sigma_t,\dot{\gamma}(t)) - \| \mathcal{U}^{\perp}(t)  \|^{2} \leq 0.
\end{equation}
\end{proposition}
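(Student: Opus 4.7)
The plan is to deduce the operator identity from the Jacobi equation, then take traces on $T_{\gamma(t)}\Sigma_t$, then prove self-adjointness of $\mathcal{U}^\top$, and finally apply Cauchy--Schwarz. For the first identity $\nabla_t\mathcal{U}+\mathcal{U}\mathcal{U}^\top+R(\dot\gamma,\cdot)\dot\gamma=0$, I would adopt the Leibniz rule $\nabla_t(\mathcal{U}(t)W(t))=(\nabla_t\mathcal{U}(t))W(t)+\mathcal{U}(t)(D_tW(t))$ for any tangential vector field $W(t)\in T_{\gamma(t)}\Sigma_t$; this is forced on us since $\mathcal{U}$ is defined only on tangential vectors and cannot be applied to the normal part of $\nabla_tW$. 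Taking $W=J_i=Be_i$ and using $D_tJ_i=\mathcal{U}^\top J_i$ together with the Jacobi equation $\nabla_t\nabla_tJ_i=-R(\dot\gamma,J_i)\dot\gamma$ gives $(\nabla_t\mathcal{U})J_i=-\mathcal{U}\mathcal{U}^\top J_i-R(\dot\gamma,J_i)\dot\gamma$, which yields the first identity since $\{J_i(t)\}$ spans $T_{\gamma(t)}\Sigma_t$.

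Next I would trace the first identity against the parallel orthonormal frame $\{E_i(t)\}$ provided by Lemma \ref{lem:Ei}. The contribution $\sum_i\langle \mathcal{U}\mathcal{U}^\top E_i,E_i\rangle$ collapses to $\tr((\mathcal{U}^\top)^2)$ because pairing with $E_i\in T_{\gamma(t)}\Sigma_t$ annihilates the normal component of the outer $\mathcal{U}$, while $\sum_i\langle R(\dot\gamma,E_i)\dot\gamma,E_i\rangle=\ric_p(T_{\gamma(t)}\Sigma_t,\dot\gamma)$ by definition. To relate $\tr(D_t\mathcal{U})$ to $(\tr\mathcal{U}^\top)'$, I would differentiate the scalar $\sum_i\langle \mathcal{U} E_i,E_i\rangle$ along $\gamma$ and use Lemma \ref{prop:comp}, namely $\nabla_tE_i=\mathcal{U}^\perp E_i$; this produces the extra term $\sum_i\langle\mathcal{U} E_i,\mathcal{U}^\perp E_i\rangle=\|\mathcal{U}^\perp\|^2$, from which \eqref{eq:truT'} follows.

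For the self-adjointness of $\mathcal{U}^\top(t)$, I would project the first identity onto $T_{\gamma(t)}\Sigma_t$. Differentiating the orthogonality constraint $\langle\mathcal{U}^\perp E_i,E_j\rangle=0$ and using Lemma \ref{prop:comp} again yields $\top\,\nabla_t(\mathcal{U}^\perp E_i)=-(\mathcal{U}^\perp)^*\mathcal{U}^\perp E_i$, so the projection produces the Riccati equation $D_t\mathcal{U}^\top+(\mathcal{U}^\top)^2+\top R(\dot\gamma,\cdot)\dot\gamma=(\mathcal{U}^\perp)^*\mathcal{U}^\perp$, whose right-hand side and curvature term are both self-adjoint on $T_{\gamma(t)}\Sigma_t$. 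Writing $\mathcal{U}^\top=S+A$ with $S$ symmetric and $A$ antisymmetric, the antisymmetric part of this equation reduces to $D_tA+SA+AS=0$, a linear homogeneous ODE in $A$ with Cauchy datum $A(0)=0$ (since $\mathcal{U}^\top(0)=D_tB(0)$ is self-adjoint by hypothesis, noting $B(0)=\mathrm{Id}$); hence $A\equiv 0$ by uniqueness.

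Finally, setting $y(t)=\log\det B(t)$, Jacobi's formula gives $y'=\tr[(D_tB)B^{-1}]=\tr\mathcal{U}^\top$ and therefore $y''=(\tr\mathcal{U}^\top)'$. Substituting \eqref{eq:truT'} yields $y''=\|\mathcal{U}^\perp\|^2-\tr((\mathcal{U}^\top)^2)-\ric_p(T_{\gamma(t)}\Sigma_t,\dot\gamma)$, and the self-adjointness of $\mathcal{U}^\top$ makes its eigenvalues real, so Cauchy--Schwarz applied to those eigenvalues gives $\tr((\mathcal{U}^\top)^2)\geq\tfrac1p(\tr\mathcal{U}^\top)^2=\tfrac1p(y')^2$, delivering \eqref{eq:eqdiffyx}. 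The main difficulty I anticipate is the self-adjointness step: one must carefully compute $\top\,\nabla_t(\mathcal{U}^\perp E_i)$ to extract the correct $(\mathcal{U}^\perp)^*\mathcal{U}^\perp$ term in the Riccati equation, and then verify that the antisymmetric part decouples into a genuine linear ODE so that uniqueness forces $A\equiv 0$.
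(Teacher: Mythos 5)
Your proposal is correct, and while most steps match the paper's route, the self-adjointness of $\mathcal{U}^{\top}$ is proved by a genuinely different argument. The paper works at the level of $B(t)$: using the identity $\top\circ\nabla_t\top\circ\top=0$ it establishes $D_t^2B(t)=-\mathcal{R}(t)B(t)$ with $\mathcal{R}(t)$ self-adjoint, and then shows that the Wronskian-type expression $(D_tB(t)^*)B(t)-B(t)^*(D_tB(t))$ is constant in $t$, hence identically zero by the initial conditions $B(0)=\mathrm{Id}$ and $D_tB(t)|_{t=0}$ self-adjoint; the conclusion follows from $(\mathcal{U}^{\top})^*-\mathcal{U}^{\top}=(B^*)^{-1}\bigl[(D_tB^*)B-B^*D_tB\bigr]B^{-1}$. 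You instead work at the level of $\mathcal{U}^{\top}$: you project the operator identity tangentially to obtain the Riccati equation $D_t\mathcal{U}^{\top}+(\mathcal{U}^{\top})^2+\top R(\dot\gamma,\cdot)\dot\gamma=(\mathcal{U}^{\perp})^*\mathcal{U}^{\perp}$ (using the correct computation $\top\nabla_t(\mathcal{U}^{\perp}E_i)=-(\mathcal{U}^{\perp})^*\mathcal{U}^{\perp}E_i$), then split $\mathcal{U}^{\top}=S+A$ into symmetric and antisymmetric parts and note that the self-adjoint curvature and $(\mathcal{U}^{\perp})^*\mathcal{U}^{\perp}$ terms drop out of the antisymmetric component, leaving $D_tA+SA+AS=0$ with $A(0)=0$, whence $A\equiv 0$ by uniqueness for linear ODEs. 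Both arguments are valid; the paper's conserved-Wronskian method is perhaps cleaner as it avoids the coupled-system issue (your $S(t)$ is known only a posteriori, though this causes no real difficulty since one can freeze $S(t)$ as a given continuous coefficient), while your Riccati decomposition has the pedagogical advantage of staying entirely at the level of $\mathcal{U}^{\top}$ and making explicit the nonnegative self-adjoint source $(\mathcal{U}^{\perp})^*\mathcal{U}^{\perp}$ that encodes the extrinsic geometry. Your derivation of the first operator identity, applying the Leibniz rule $\nabla_t(\mathcal{U}W)=(\nabla_t\mathcal{U})W+\mathcal{U}(D_tW)$ to the Jacobi fields $J_i=Be_i$, is also a slightly more direct route than the paper's operator-level product rule with explicit projector corrections, though they encode the same content; the remaining steps (the trace computations via the frame $\{E_i\}$ with $D_tE_i=0$, the identity $y'=\tr\mathcal{U}^{\top}$, and the Cauchy--Schwarz bound $\tr((\mathcal{U}^{\top})^2)\geq\tfrac1p(\tr\mathcal{U}^{\top})^2$ valid once $\mathcal{U}^{\top}$ is self-adjoint) agree with the paper.
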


\begin{remark}
In case $p={\rm dim}(M)$ then ${\mathcal U}^{\top}(t)={\mathcal U}(t)$,  ${\mathcal U}^{\perp}(t)=0
$ and  $\ric_p(T_{\gamma(t)}\Sigma_t,\dot{\gamma}(t))=\ric(\dot{\gamma}(t), \dot{\gamma}(t))$, so that Proposition 
\ref{cor:imp} recovers the classical Jacobian estimates expressing the volume distortion along a geodesic in terms of Ricci curvature (see for instance \cite[Lemma 3.1]{CMS2}).
\end{remark}

\begin{proof}
First of all, there is a natural extension of $B(t)$ (and of $\nabla_{t}B(t)$) 
to maps from the whole $T_{\gamma(0)}M$ just by composing with the orthogonal projection into $T_{\gamma(0)}\Sigma_{0}$, i.e. for $v \in T_{\gamma(0)}M$ we consider $B(t) v^{\top}$.
Differentiating the identity $\top \circ \top =\top$ gives $\nabla_{t}\top \circ \top +\top \circ\nabla_{t} \top =\nabla_{t} \top$; left and right composing with  $\top$, yields $\top \circ \nabla_{t}\top \circ \top =0$. 
Therefore, using \eqref{eq:jacobi} and \eqref{eq:DtB},  we get
\begin{align*}
\nabla_t \left[\mathcal{U}(t)\right] &= \left[\nabla_t\nabla_tB(t)\right]B(t)^{-1} +  \nabla_tB(t) (\top \circ \nabla_{t}\top \circ \top)  B(t)^{-1} +  \nabla_tB(t)  [\nabla_{t}B(t)^{-1}]^{\top} \\
&= - R(\dot{\gamma}(t),\cdot)\dot{\gamma}(t)  +  \nabla_tB(t)  [D_{t}B(t)^{-1}] = - R(\dot{\gamma}(t),\cdot)\dot{\gamma}(t)  - \nabla_tB(t)B(t)^{-1}D_tB(t)B(t)^{-1} \\
&= - R(\dot{\gamma}(t),\cdot)\dot{\gamma}(t)   - \mathcal{U}(t)\mathcal{U}^{\top}(t).
\end{align*}
Taking the trace along $T_{\gamma(t)}\Sigma_t$ yields the second identity. 
To get the  identity \eqref{eq:truT'},  observe that
$\tr {\mathcal{U}^{\top}}(t)=\sum_{i=1}^p\langle \mathcal{U}(t)E_i(t),E_i(t)\rangle$ and 
\begin{align*}
\langle \mathcal{U}(t)E_i(t),E_i(t)\rangle'&= \left\langle \left[D_t \mathcal{U}(t)\right] E_i(t),E_i(t)\right\rangle +\left\langle \mathcal{U}(t)\left[D_tE_i(t)\right],E_i(t)\right\rangle + \left\langle \mathcal{U}(t)E_i(t),\nabla_t E_i(t)\right\rangle.
\end{align*}
Since $D_tE_i=(\nabla_t E_i)^{\top}=0$ and $\nabla_tE_i=(\nabla_tE_i)^{\perp}$, we conclude that
\begin{align*}
(\tr{\mathcal{U}^{\top}}(t))'= \tr(D_t\mathcal{U}(t)) + \sum_{i=1}^p\langle \mathcal{U}(t) E_{i}(t), \nabla_t E_i(t)\rangle =\tr(D_t{\mathcal{U}}(t)) + \sum_{i=1}^p\langle (\mathcal{U}(t)E_i(t))^{\perp}, \nabla_t E_i(t)\rangle.
\end{align*}
In particular let us explicitly observe that, in general, $\tr(D_t{\mathcal{U}}(t))\neq \tr({\mathcal{U}^{\top}}(t))'$. The claimed identity \eqref{eq:truT'} follows by observing that   
$\|\mathcal{U}(t)^{\perp}\|^{2}= \sum_{i=1}^p\langle (\mathcal{U}(t)E_i(t))^{\perp}, \nabla_t E_i(t)\rangle$.

The rest of the proof is devoted to show \eqref{eq:eqdiffyx}. Setting $y(t)=\log\det B(t)$, we have that
\begin{align}
y'(t_{0}) &= \left. \frac{d}{dt} \right|_{ t=t_{0}} \log\det \big( B(t)B(t_{0})^{-1} \big)= \left. \frac{d}{dt} \right|_{ t=t_{0}} \log\det \left[ \big(  \langle B(t)B(t_{0})^{-1} E_{i}(t), E_{j}(t) \rangle \big)_{i,j} \right]  \nonumber \\
&= \tr \left[(D_{t} B(t)) B(t_{0})^{-1} \right]|_{t=t_{0}}+  2 \sum_{i=1}^{p} \langle D_{t} E_{i}(t), E_{i}(t) \rangle |_{t=t_{0}} \nonumber \\
& =  \tr \left[(D_{t} B(t)) B(t_{0})^{-1} \right]|_{t=t_{0}} = \tr(\mathcal U^{\top}(t_{0})), \quad  \label{eq:y'}
\end{align}
since by construction $D_{t} E_{i}(t)=0$.

We next claim that, under the assumption that  $D_{t}B(t)|_{t=0}$ is self-adjoint, then
\begin{equation}\label{eq:claimUTSA}
\mathcal U^{\top}(t):T_{\gamma(t)} \Sigma_{t} \to T_{\gamma(t)} \Sigma_{t} \quad   \text{ is self-adjoint for all $t \in [0,1]$.}
 \end{equation}
 To this aim, calling $(\mathcal U^{\top}(t))^{*}$ the adjoint operator, we observe that 
\begin{equation}\label{eq:U-U*}
(\mathcal U^{\top}(t))^{*}-\mathcal U^{\top}(t)=(B(t)^{*})^{-1} \left[ (D_{t}B(t)^{*}) B(t)   - B(t)^{*} (D_{t}B(t)) \right]   B(t)^{-1},
\end{equation}
and that 
\begin{equation}\label{eq:DtUU*}
D_{t} \left[ (D_{t}B(t)^{*}) B(t)   - B(t)^{*} (D_{t}B(t)) \right] = (D^{2}_{t}B(t)^{*}) B(t)   - B(t)^{*} (D^{2}_{t}B(t)).
\end{equation}
Now, combining the Jacobi equation \eqref{eq:jacobi} with the identity $\top \circ \nabla_{t}\top \circ \top =0$ proved at the beginning of the proof, we have
\begin{align}
D_{t}^{2} B(t)&=\top \nabla_{t}\left( \top \nabla_{t} B(t)  \right)= \top  \left(\nabla_{t}^{2} B(t)  \right)+ \top (\nabla_{t}  \top) \top \nabla_{t} B(t)  =  \top  \left(\nabla_{t}^{2} B(t)  \right) \nonumber \\
&= -  \left( R(\dot{\gamma}(t),B(t))\dot{\gamma}(t) \right)^{\top} =- {\mathcal R}(t) B(t),  \label{eq:D2B}
\end{align}
where 
\begin{equation}
{\mathcal R}(t):T_{\gamma(t)} \Sigma_{t} \to T_{\gamma(t)} \Sigma_{t}, \quad  {\mathcal R}(t)[v]:=\left[R(\dot{\gamma}(t), v) \dot{\gamma}(t)  \right]^{\top}
\end{equation}
is self-adjoint; indeed, in the orthonormal basis $\{E_{i}(t)\}_{i=1,\ldots,p}$, it  is represented by the symmetric matrix $\langle R(\dot{\gamma}(t), E_{i}(t)) \dot{\gamma}(t), E_{j}(t)\rangle$. Plugging \eqref{eq:D2B} into \eqref{eq:DtUU*}, we obtain that $(D_{t}B(t)^{*}) B(t)   - B(t)^{*} (D_{t}B(t))$ is constant in $t$ and thus vanishes identically, since by assumption $B(0)={\rm Id}$ and $D_{t}B(t)|_{t=0}$ is self-adjoint.
Taking into account \eqref{eq:U-U*}, this concludes the proof of the claim \eqref{eq:claimUTSA}.
\\Using that ${\mathcal U}^{\top} (t)$ is a  self-adjoint operator over a $p$-dimensional space, by Cauchy-Schwartz inequality,  we have that  
\begin{equation}\label{eq:CSUT}
\tr\big[({\mathcal U}^{\top} (t))^{2} \big]\geq \frac{1}{p} \left(\tr\big[\mathcal U^{\top} (t) \big] \right)^{2}.
\end{equation}
 The desired estimate \eqref{eq:eqdiffyx} then follows  from the combination of \eqref{eq:truT'}, \eqref{eq:y'} and \eqref{eq:CSUT}.
\end{proof}

\noindent

In the final part of the section we specialize to the case $p=1$, giving the self-contained easier arguments.

\begin{proposition}
Assume $p=1$,  let $J:=J_{e_1}$ and $E:=E_1$ be as above. In particular, $\dim T_{\gamma(t)}\Sigma_t = 1$ for every $t\in [0,1]$, and $E=|J(t)|^{-1}J(t)$. Then 
\begin{align}\label{eq:NEt}
\nabla_t E(t) = |J(t)|^{-1} (\nabla_t J(t))^{\perp}.
\end{align}
\end{proposition}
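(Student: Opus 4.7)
The plan is straightforward here because in the $p=1$ case everything is scalar: since $J$ is assumed non-vanishing, $T_{\gamma(t)}\Sigma_{t}=\mathrm{span}\{J(t)\}$ is one-dimensional, so $E(t)$ must be the unit vector in this line. The identification $E(t)=|J(t)|^{-1}J(t)$ then follows by writing $E(t)=a(t)J(t)$ with $a$ continuous and $a(0)=1$, and using $|E(t)|=1$ to get $a(t)=|J(t)|^{-1}$ (the positive root being selected by continuity from $a(0)=1$).

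Next I would compute $\nabla_{t}E$ by the product rule applied to $E=|J|^{-1}J$. The scalar derivative gives
\begin{align*}
\nabla_{t}E(t) = -\frac{\langle \nabla_{t}J(t),J(t)\rangle}{|J(t)|^{3}}\,J(t) + \frac{1}{|J(t)|}\nabla_{t}J(t) = \frac{1}{|J(t)|}\Bigl(\nabla_{t}J(t) - \langle \nabla_{t}J(t),E(t)\rangle\,E(t)\Bigr).
\end{align*}
The key observation is that, because $\{E(t)\}$ is an orthonormal basis of the one-dimensional space $T_{\gamma(t)}\Sigma_{t}$, the tangential projection of any vector $v\in T_{\gamma(t)}M$ is simply $v^{\top}=\langle v,E(t)\rangle E(t)$. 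Applied to $v=\nabla_{t}J(t)$, this identifies the bracket above with $(\nabla_{t}J(t))^{\perp}$, yielding the claimed formula
\begin{align*}
\nabla_{t}E(t) = |J(t)|^{-1}\,(\nabla_{t}J(t))^{\perp}.
\end{align*}

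As a consistency check, one notes that the right-hand side lies in $(T_{\gamma(t)}\Sigma_{t})^{\perp}$, so the equation $(\nabla_{t}E)^{\top}=0$ of Lemma~\ref{lem:Ei} is automatically satisfied, confirming that this $E$ is the unique solution of the defining ODE. There is no real obstacle in this argument; the only thing worth stating explicitly is the use of $\langle \nabla_{t}J,J\rangle = |J|\,\langle \nabla_{t}J,E\rangle$ when differentiating $|J|$, which is valid wherever $J$ does not vanish, i.e., everywhere on $[0,1]$ by the standing assumption.
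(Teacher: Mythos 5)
Your proof is correct and follows essentially the same route as the paper: differentiate $E=|J|^{-1}J$ by the product rule, compute $(|J|^{-1})'$ via $\langle J,\nabla_t J\rangle$, and recognize the resulting expression as the orthogonal projection $|J|^{-1}(\nabla_t J)^{\perp}$. The preliminary justification that $E(t)=|J(t)|^{-1}J(t)$ and the closing consistency check are sensible additions but not needed, since the identification $E=|J|^{-1}J$ is already stated in the proposition.
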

\begin{proof}
We compute $\nabla_t E$ as follows
\begin{align*}
\nabla_tE(t)=\left(|J(t)|^{-1}\right)' J(t) + |J(t)|^{-1} \nabla_t J(t).
\end{align*}
Since
\begin{align*}
\left(|J|^{-1}\right)'= \left(\langle J,J\rangle^{-\frac{1}{2}}\right)'= - |J|^{-3}\langle J,\nabla_{t}J\rangle= -|J|^{-2}\langle E, \nabla_{t} J\rangle,
\end{align*}
we get
\begin{align*}
\nabla_tE= -|J|^{-1} \langle E,\nabla_t J\rangle E + |J|^{-1} \nabla_t J= |J|^{-1} (\nabla_t J)^{\perp}.
\end{align*}
\end{proof}
\begin{corollary}\label{cor:modric}
Assume $p=1$, and consider ${\mathcal{U}^{\top}}(t)$ as above. Then, we have
\begin{align*}
\langle \mathcal{U}^{\top}(t)E(t),E(t)\rangle' + \langle \mathcal{U}^{\top}(t)E(t),E(t)\rangle^2 + \ric_1(T_{\gamma(t)}\Sigma_t, \dot{\gamma}_t)= |(\mathcal{U}(t)E(t))^{\perp}|^2.
\end{align*}
\end{corollary}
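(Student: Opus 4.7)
The plan is to give a direct, self-contained derivation specialized to $p=1$, rather than invoking the identity \eqref{eq:truT'} of Proposition \ref{cor:imp}. The key observation is that when $T_{\gamma(t)}\Sigma_t$ is one-dimensional and spanned by the unit vector $E(t)$, everything reduces to a scalar ODE computation: writing $J(t)=b(t) E(t)$ with $b(t)=|J(t)|>0$, the operator $\mathcal U^{\top}(t)$ becomes multiplication by the scalar $u(t):=\langle \mathcal U^{\top}(t)E(t),E(t)\rangle$, and by $D_t E=0$ together with the chain rule I expect to find $u(t)=(\log b(t))'=b'(t)/b(t)$.

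First, I would differentiate the decomposition $J=bE$ twice, using $D_tE=(\nabla_tE)^{\top}=0$ and the preceding proposition's identity $\nabla_tE=|J|^{-1}(\nabla_tJ)^{\perp}$. Taking inner products with $E$ and using that $\langle \nabla_tE,E\rangle=\tfrac12(|E|^2)'=0$ and $\langle \nabla_t\nabla_t E,E\rangle=-|\nabla_tE|^2$, one computes
\begin{align*}
\langle \nabla_t\nabla_t J,E\rangle = b''-b\,|\nabla_tE|^2.
\end{align*}
On the other hand, the Jacobi equation $\nabla_t\nabla_t J+R(\dot\gamma,J)\dot\gamma=0$ together with $J=bE$ and the definition of $\ric_1$ yields $\langle \nabla_t\nabla_t J,E\rangle=-b\,\ric_1(T_{\gamma(t)}\Sigma_t,\dot\gamma_t)$.

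Dividing by $b$ gives $b''/b-|\nabla_tE|^2=-\ric_1(T_{\gamma(t)}\Sigma_t,\dot\gamma_t)$. Since $u=b'/b$ satisfies $u'=b''/b-u^2$, substituting $b''/b=u'+u^2$ produces
\begin{align*}
u'+u^2+\ric_1(T_{\gamma(t)}\Sigma_t,\dot\gamma_t)=|\nabla_tE|^2.
\end{align*}
Finally I would invoke $\nabla_tE=|J|^{-1}(\nabla_tJ)^{\perp}=(\mathcal U(t)E(t))^{\perp}$ (from the previous proposition and the definition of $\mathcal U$ in \eqref{eq:CalU}, since $\mathcal U(t)E(t)=|J|^{-1}\nabla_tJ$) to rewrite $|\nabla_tE|^2=|(\mathcal U(t)E(t))^{\perp}|^2$, which is exactly the claimed identity.

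There is no real obstacle here; the only care needed is to keep track of tangential and normal components consistently and to verify the identification $u(t)=\langle \mathcal U^{\top}(t)E(t),E(t)\rangle$, which follows from unraveling the definitions $B(t)e_1=J(t)=b(t)E(t)$ so that $D_tB(t)e_1=b'(t)E(t)$ and hence $\mathcal U^{\top}(t)=b'/b$ as a $1\times 1$ operator.
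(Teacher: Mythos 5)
Your proof is correct, and it takes a genuinely different route from the paper. The paper proves Corollary~\ref{cor:modric} by specializing the general $p$-dimensional Riccati identity \eqref{eq:truT'} from Proposition~\ref{cor:imp} to $p=1$: for a one-dimensional $\Sigma_t$ one has $\tr(\mathcal U^{\top})=\langle\mathcal U^{\top}E,E\rangle$, $\tr((\mathcal U^{\top})^2)=\langle\mathcal U^{\top}E,E\rangle^2$, and $\|\mathcal U^{\perp}\|^2=|(\mathcal UE)^{\perp}|^2$, after which \eqref{eq:truT'} is literally the claim. You instead give a self-contained scalar derivation: decompose $J=bE$ with $b=|J|$, differentiate twice, take inner products with $E$ (using only $|E|\equiv 1$, which for $p=1$ is equivalent to $D_tE=0$), and combine with the Jacobi equation to get $b''/b-|\nabla_tE|^2=-\ric_1$; the substitution $u=b'/b$ then yields the identity, and the final identifications $u=\langle\mathcal U^{\top}E,E\rangle$ and $\nabla_tE=(\mathcal UE)^{\perp}$ follow from unraveling the definitions of $B$ and $\mathcal U$ together with the proposition preceding the corollary. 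What your approach buys is independence from the machinery of Proposition~\ref{cor:imp} (trace manipulations, the projection identity $\top\circ\nabla_t\top\circ\top=0$, etc.), making the $p=1$ case transparently a Sturm--Liouville/Riccati computation in one variable; what it gives up is the connection to the general-$p$ statement, which the paper needs anyway for Theorem~\ref{th:lower}. All the individual steps check out: $\langle\nabla_tE,E\rangle=0$ and $\langle\nabla_t\nabla_tE,E\rangle=-|\nabla_tE|^2$ from $|E|\equiv 1$, the sign in $\langle\nabla_t\nabla_tJ,E\rangle=-b\,\ric_1$ is consistent with the paper's Jacobi equation and definition of $\ric_1$, and $D_tB(t)e_1=b'E$ follows from $(\nabla_tE)^{\top}=0$.
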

\begin{proof} Since 
\begin{align*}
\tr((\mathcal{U}^{\top})^2)=\langle ({\mathcal{U}}^{\top})^2(t)E(t),E(t)\rangle=\langle {\mathcal{U}^{\top}}(t)E(t),E(t)\rangle^2,
\end{align*}
and, from \eqref{eq:NEt}, we have
\begin{align}\label{eq:UtperpJe}
\nabla_{t}E(t) = |J(t)|^{-1} (\nabla_{t}J(t))^{\perp} = |J(t)|^{-1} (\nabla_{t}J_{B(t)^{-1}J(t)})^{\perp}= |J(t)|^{-1} (\mathcal{U}(t)J(t))^{\perp}= (\mathcal{U}(t)E(t))^{\perp},
\end{align}
the claim  follows from \eqref{eq:truT'}.
\end{proof}

\section{OT characterization of  sectional curvature  upper bounds}\label{sec:UB}
\begin{proposition}\label{prop:kconvex}
Let $M$ be a complete Riemannian manifold without boundary with $\ric_1\leq K$,  let $\{\mu_t\}_{t\in [0,1]}$ be a countably $\mathcal{H}^1$-rectifiable $W_2$-geodesic, and 
consider ${B}_x(t):T_{x}\Sigma_0\rightarrow T_{\gamma(t)}\Sigma_t$, $t\in [0,1]$ as in Remark \ref{rem:defB}. Then, the function
$[0,1]\ni t\mapsto  \mathcal{J}_x(t):=\det{B}_x(t)\in \R$ belongs to $C([0,1])\cap C^{2}((0,1))$ and satisfies 
\begin{align}\label{inequ:kconvex}
  \mathcal{J}_x'' + K|\dot{\gamma}|^2\mathcal{J}_x\geq 0\ \ \mbox{ on } (0,1).
\end{align}
In particular,
if $t_0,t_1\in [0,1]$, $\tau(s)=s(t_1-t_0)+t_0$, and $s\in[0,1]\mapsto\varsigma_s=\gamma_{\tau(s)}$, 
we have for all $s\in [0,1]$
\begin{align}\label{inequ:integrated}
\cJ_x(\tau(s))\leq \sigma_{K,1}^{(1-s)}(|\dot{\varsigma}|)\, \cJ_x(t_0) + \sigma_{K,1}^{(s)}(|\dot{\varsigma}|)\, \cJ_x(t_1).
\end{align}
\end{proposition}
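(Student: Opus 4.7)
The plan is to upgrade the pointwise Riccati-type identity of Corollary \ref{cor:modric} to a linear second-order differential inequality for $\mathcal{J}_x$, and then to integrate it by a Sturm--Picone comparison with the explicit solution of the associated equality.

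For the pointwise step, I first observe that with $p=1$ and for $\mu_{1/2}$-a.e. $x$, the map $t\mapsto B_x(t)$ is (after evaluating on a unit tangent $e\in T_x\Sigma_{1/2}$) a Jacobi field along the smooth transport geodesic $\gamma_x$, and is nowhere degenerate on $[0,1]$ by Lemma \ref{B}. Hence $\mathcal{J}_x(t)=\det B_x(t)=|J(t)|$ is strictly positive and $C^{\infty}$ in $t\in[0,1]$, which yields the claimed regularity. Setting $y(t):=\log \mathcal{J}_x(t)$, the computation \eqref{eq:y'} from the proof of Proposition \ref{cor:imp} reads $y'(t)=\tr(\mathcal{U}^{\top}(t))=\langle\mathcal{U}^{\top}(t)E(t),E(t)\rangle$, and squaring gives $y'(t)^{2}=\tr((\mathcal{U}^{\top}(t))^{2})$ (for $p=1$ this is trivially self-adjoint). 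Corollary \ref{cor:modric} then becomes
\begin{equation*}
y''(t)+y'(t)^{2}+\ric_1(T_{\gamma(t)}\Sigma_t,\dot{\gamma}(t))=|(\mathcal{U}(t)E(t))^{\perp}|^{2}\geq 0.
\end{equation*}
Using the hypothesis $\ric_1(T_{\gamma(t)}\Sigma_t,\dot\gamma(t))\leq K|\dot\gamma|^{2}$ and discarding the nonnegative right-hand side, I obtain $y''+(y')^{2}\geq -K|\dot\gamma|^{2}$. Since $\mathcal{J}_x''=(y''+(y')^{2})\mathcal{J}_x$, this is exactly \eqref{inequ:kconvex}.

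For the integrated inequality, I reparametrize by $f(s):=\mathcal{J}_x(\tau(s))$ with $\tau(s)=(1-s)t_0+st_1$; using $|\dot\varsigma|^{2}=(t_1-t_0)^{2}|\dot\gamma|^{2}$, the chain rule converts \eqref{inequ:kconvex} into $f''(s)+K|\dot\varsigma|^{2}f(s)\geq 0$ on $(0,1)$. On the other hand,
\begin{equation*}
g(s):=\sigma_{K,1}^{(1-s)}(|\dot\varsigma|)\,\mathcal{J}_x(t_0)+\sigma_{K,1}^{(s)}(|\dot\varsigma|)\,\mathcal{J}_x(t_1)
\end{equation*}
is, by design of the distortion coefficients, the solution of $g''+K|\dot\varsigma|^{2}g=0$ on $[0,1]$ with $g(0)=f(0)$ and $g(1)=f(1)$. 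The difference $h:=g-f$ therefore satisfies $h(0)=h(1)=0$ and $h''+K|\dot\varsigma|^{2}h\leq 0$. When $K|\dot\varsigma|^{2}<\pi^{2}$ (the case $K\leq 0$ being immediate), the coefficient $K|\dot\varsigma|^{2}$ lies strictly below the first Dirichlet eigenvalue $\pi^{2}$ of $-\partial_s^{2}$ on $[0,1]$, so the standard maximum principle for such Schrödinger operators yields $h\geq 0$, i.e. $f\leq g$, which is \eqref{inequ:integrated}. When $K|\dot\varsigma|^{2}\geq \pi^{2}$ the coefficients $\sigma_{K,1}^{(\cdot)}(|\dot\varsigma|)$ are by convention $+\infty$ and \eqref{inequ:integrated} holds trivially.

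The only non-trivial conceptual obstacle is already absorbed in the preceding sections: namely that the pointwise Riccati identity of Corollary \ref{cor:modric} is available for the generic geodesic $\gamma_x$ of the optimal plan, which in turn rests on the non-degeneracy of $B_x$ (Lemma \ref{B}) and on the almost-everywhere differentiability of $t\mapsto B_x(t)$ with self-adjoint derivative at the reference time (Lemma \ref{lem:vDiff}). Once these inputs are in place, the proof is a routine Riccati-ODI plus Sturm comparison, converting the pointwise bound $\ric_1\leq K$ into an integrated convexity-type inequality for the Jacobian of the transport map.
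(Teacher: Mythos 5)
Your proof is correct and follows essentially the same route as the paper: logarithmic substitution $y_x=\log\mathcal{J}_x$, identification $y_x'=\tr\mathcal{U}_x^{\top}$, insertion of Corollary \ref{cor:modric} together with $\ric_1\leq K$ to get the Riccati inequality, conversion to the linear ODI for $\mathcal{J}_x$, and then Sturm comparison after the affine reparametrization. The only (harmless) difference is that you spell out the comparison step via the first Dirichlet eigenvalue, whereas the paper simply invokes the ``classical comparison principle.''
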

\begin{proof}
First note that, setting $t\in[0,1]\mapsto y_x(t):= \log\mathcal{J}_x(t)$ we have  that $y_x'(t)= \tr((D_t{B}_x(t)){B}_x^{-1}(t))=\tr \mathcal{U}_x(t)=\tr \mathcal{U}_x(t)^{\top}$. Then Corollary \ref{cor:modric} yields that 
\begin{align*}
y''_x+(y_x')^2+K|\dot{\gamma}_x|^2\geq 0 \ \ \mbox{ on }  (0,1).
\end{align*}
By computing $\mathcal{J}_x''(t)=\left(e^{y_x}\right)''(t)$ this yields (\ref{inequ:kconvex}).
Moreover, considering $t_0,t_1,\tau$ and $\varsigma$ as above we get
\begin{align*}
\frac{d^2}{ds^2}\cJ_x\circ\tau + K|\dot{\varsigma}|^2 \cJ_{x}\circ\tau \geq 0 \  \ \mbox{ on }  (0,1).
\end{align*}
that is equivalent to (\ref{inequ:integrated}) by classical comparison principle.
\end{proof}
\begin{theorem}[Curvature upper bounds]\label{thm:USB}
Let $(M,g)$ be a complete Riemannian manifold without boundary and let $K\geq 0$. Then the following statements {\rm (i)} and {\rm (ii)} are equivalent: 
\begin{itemize}
\item[(i)] $\ric_{1}\leq K$ or, equivalently, $\sect\leq K$.
\medskip
\item[(ii)]
Let $\{\mu_t\}_{t\in [0,1]}$ be a countably $\mathcal{H}^1$-rectifiable $W_2$-geodesic, and let $\Pi$ be the corresponding dynamical transport plan. Then, if $t_0,t_1\in (0,1)$ and  $\tau(s)=(1-s)t_0+st_1$, it holds
\begin{align*}
\cH^{1}(\supp \mu_{\tau(s)})\leq \int\left[\sigma_{K,1}^{(1-s)}(|\dot{\gamma\circ\tau}|)\rho_{t_0}(\gamma(t_0))^{-1}+\sigma_{K,1}^{(s)}(|\dot{\gamma\circ \tau}|)\rho_{t_1}(\gamma(t_1))^{-1}\right]d\Pi(\gamma), \ \ \forall s \in [0,1],
\end{align*}
where $\rho_t$ is the density of $\mu_t$ w.r.t. $\mathcal{H}^1$.
\end{itemize}
In the case of $K=0$ the inequality in {\rm (ii)} becomes
\begin{align*}
\cH^{1}(\supp \mu_{\tau(s)})\leq (1-s)\cH^{1}(\supp\mu_{t_0}) + s\cH^{1}(\supp\mu_{t_1}), \ \ \forall s \in [0,1].
\end{align*}
\end{theorem}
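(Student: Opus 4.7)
The plan is to prove both implications via the Jacobi field analysis of Section \ref{sec:Jacobi} combined with the Monge-Amp\`ere equality in Lemma \ref{B}, the converse proceeding through a localization construction.

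For the forward direction (i) $\Rightarrow$ (ii), I first rewrite the left-hand side using absolute continuity of $\mu_{\tau(s)}$ with respect to $\cH^1 \llcorner \supp \mu_{\tau(s)}$:
\[
\cH^1(\supp \mu_{\tau(s)}) = \int \rho_{\tau(s)}(\gamma(\tau(s)))^{-1}\,d\Pi(\gamma).
\]
Since $t_0,t_1 \in (0,1)$ forces $\tau(s) \in (0,1)$ for all $s\in[0,1]$, Lemma \ref{B} yields the Monge-Amp\`ere \emph{equalities} $\rho_t(\gamma(t))^{-1} = \mathcal J_x(t)/\rho_{1/2}(x)$ for $t = t_0, t_1, \tau(s)$, where $\mathcal J_x(t):=\det B_x(t)$ and $x := \gamma(1/2)$. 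Plugging these into the pointwise inequality of Proposition \ref{prop:kconvex} and dividing through by $\rho_{1/2}(x) > 0$ produces
\[
\rho_{\tau(s)}(\gamma(\tau(s)))^{-1} \leq \sigma_{K,1}^{(1-s)}(|\dot{\gamma\circ\tau}|)\,\rho_{t_0}(\gamma(t_0))^{-1} + \sigma_{K,1}^{(s)}(|\dot{\gamma\circ\tau}|)\,\rho_{t_1}(\gamma(t_1))^{-1}
\]
for $\Pi$-a.e.\ $\gamma$; integrating against $\Pi$ yields (ii). The $K=0$ case reduces to the displayed form by $\sigma_{0,1}^{(t)}(\theta) = t$ and $\int \rho_{t_i}(\gamma(t_i))^{-1}\,d\Pi = \cH^1(\supp\mu_{t_i})$.

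For the converse (ii) $\Rightarrow$ (i), I argue by localization to recover $\sect \leq K$ at every point and in every 2-plane. Fix $x_0 \in M$ and orthonormal $v, w \in T_{x_0}M$. For small auxiliary parameters $\delta, \ell > 0$, construct the one-parameter family of geodesics
\[
\gamma_u(t) := \exp_{\beta(u)}\!\bigl(\ell\, t\, V(u)\bigr),\qquad u \in (-\delta,\delta),\ t\in[0,1],
\]
where $\beta(u) := \exp_{x_0}(u w)$ and $V(u) \in T_{\beta(u)}M$ is the parallel transport of $v$ along $\beta$; set $\mu_t := (F_t)_\sharp \tfrac{1}{2\delta}\leb^1\llcorner(-\delta,\delta)$ with $F_t(u) := \gamma_u(t)$. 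A direct Jacobi-field computation (using the symmetry $\nabla_t \partial_u = \nabla_u \partial_t$ and the fact that $V$ is parallel) shows that $J(t) := \partial_u\gamma_u(t)|_{u=0}$ is the Jacobi field along $\gamma_0$ with $J(0)=w$ and $\nabla_t J(0) = 0$.

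The central technical step is to show that $\{\mu_t\}_{t\in[0,1]}$ is a countably $\cH^1$-rectifiable $W_2$-geodesic. Rectifiability of each $\supp \mu_t = F_t((-\delta,\delta))$ is immediate; for $\delta, \ell$ sufficiently small, optimality of the candidate coupling $\gamma_u(0) \mapsto \gamma_u(1)$ follows from the fact that in normal coordinates around $x_0$ this map is $C^1$-close to the rigid translation $y \mapsto y + \ell v$, hence is (the gradient of) a $\sfd^2$-convex potential by an implicit-function argument. Having this in hand, I would insert $\{\mu_t\}$ into (ii), use $\rho_t(\gamma_u(t))^{-1} = 2\delta|\partial_u \gamma_u(t)|$, Taylor-expand $|\partial_u\gamma_u(\cdot)| = |J(\cdot)| + O(u^2)$, divide both sides by $2\delta$, and let $\delta \to 0$. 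The resulting scalar inequality
\[
|J(\tau(s))| \leq \sigma_{K,1}^{(1-s)}\!\bigl((t_1-t_0)\ell\bigr)|J(t_0)| + \sigma_{K,1}^{(s)}\!\bigl((t_1-t_0)\ell\bigr)|J(t_1)|,
\]
valid for all $t_0,t_1 \in (0,1)$ and $s \in [0,1]$, is by classical ODE comparison equivalent to $|J|''(t) + K\ell^2|J(t)| \geq 0$ on $(0,1)$. Applying Corollary \ref{cor:modric} in the form $|J|''/|J| = -\ric_1(\mbox{span}(J),\dot\gamma_0) + |(\mathcal U E)^\perp|^2$ and evaluating at $t=0$, where $(\mathcal U E)^\perp(0) = \nabla_t J(0)/|J(0)| = 0$ and $\dot\gamma_0(0)=\ell v$, one gets $-\ell^2\sect(v,w) + K\ell^2 \geq 0$, i.e.\ $\sect(v,w) \leq K$. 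Arbitrariness of the plane and of $x_0$ gives $\sect \leq K$ globally; equivalence with $\ric_1 \leq K$ is Remark \ref{rem:p12nn-1}. The main obstacle I anticipate is the optimality verification for the candidate coupling in the converse direction; once that is settled, the extraction of the pointwise sectional curvature bound reduces to a Taylor expansion in $\delta$ and an application of Corollary \ref{cor:modric}.
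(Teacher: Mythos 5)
Your proof of (i)$\Rightarrow$(ii) is essentially the paper's argument: rewrite $\cH^1(\supp\mu_{\tau(s)})$ as $\int\rho_{\tau(s)}^{-1}d\mu_{\tau(s)}$, pull back to $\mu_{1/2}$ via the Monge-Amp\`ere \emph{equality} of Lemma \ref{B} (which is where $t_0,t_1\in(0,1)$ is used), apply the pointwise estimate \eqref{inequ:integrated} from Proposition \ref{prop:kconvex}, and push forward again. Nothing to add there.

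The converse is where you diverge from the paper, and it is where your argument has a genuine gap — one you yourself flag. You build the candidate geodesic $\mu_t=(F_t)_\sharp\frac{1}{2\delta}\leb^1$ out of a parallel family of geodesics $\gamma_u(t)=\exp_{\beta(u)}(\ell t V(u))$ and then need the coupling $\gamma_u(0)\mapsto\gamma_u(1)$ to be \emph{optimal} in order for $\{\mu_t\}$ to be a $W_2$-geodesic at all, which is what (ii) requires. The assertion that this follows ``by an implicit-function argument'' because the map is ``$C^1$-close to a rigid translation'' is not a proof: $c$-cyclical monotonicity of the coupling (equivalently, $\sfd^2$-concavity of the induced potential) does not follow from $C^1$-proximity to a rigid motion on a general manifold, and making it rigorous would in effect reproduce the paper's construction. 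The paper sidesteps this entirely by \emph{starting} from a Kantorovich potential: it takes a compactly supported $\phi$ with $\nabla\phi(x_0)=v$ and $\nabla^2\phi(x_0)=0$, rescales it by a small $\eta$ so that $\eta\phi$ is genuinely a Kantorovich potential, and sets $T_t(y)=\exp_y(-t\nabla\phi(y))$. Optimality of the induced couplings is then automatic, and the vanishing Hessian at $x_0$ plays exactly the role of your $\nabla_tJ(0)=0$, ensuring the term $\|\mathcal U^\perp\|^2$ is uniformly $<\epsilon$ near the midpoint (see \eqref{eq:claim0}). Two further differences worth noting: the paper anchors the reference submanifold at $t=1/2$ rather than at an endpoint, which is more natural given that (ii) only speaks about interior times $t_0,t_1\in(0,1)$; and the paper runs a contradiction argument with a strict $\epsilon$-buffer, together with the monotonicity of $K\mapsto\sigma_{K,1}^{(t)}(\theta)$, rather than trying to deduce the non-strict inequality $\sect(v,w)\leq K$ directly from a limit at $t=0$. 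Once you replace your construction by a Kantorovich-potential-based one, your Jacobi-field calculation (using Corollary \ref{cor:modric} and $\nabla_t J(0)=0$, i.e.\ $(\mathcal U E)^\perp(0)=0$) is sound and recovers the sectional bound.
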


\begin{remark} \label{rem:UBK>0}
Recall  from Remark \ref{rem:p12nn-1} that the condition $\ric_{1}\leq K<0$ is never satisfied as $\ric_{1}(\R v,v)=0$ for every $v\in TM$; hence  it makes sense just  to assume a non-negative upper bound $K\geq 0$ and, in this case, $\ric_{1}\leq K$ is equivalent to $\sect \leq K$.
\end{remark}
\begin{remark}\label{rem:SharpUB}
In the assertion (ii) of Theorem \ref{thm:USB},  one cannot relax the assumption to  $t_0,t_1\in [0,1]$. For instance, one can consider a 
cylinder $\mathbb{R}\times \mathbb{S}^1$ that is a space of zero (in particular non-positive) sectional curvature.  Parametrize $\mathbb{S}^1$  by arclength on $[0,2\pi]$, in particular $0$ and $\pi$ are two antipodal points in $\mathbb{S}^1$. 
Then, the uniform distribution on the set of all geodesics connecting $(s,0)$ and $(s,\pi)$ for $s\in [0,1]$ defines 
a countably $\mathcal{H}^1$-rectifiable $W_2$-geodesic $\left\{\mu_t\right\}_{t\in [0,1]}$ such that $\supp\mu_{0}=[0,1]\times \left\{0\right\}, \, \supp\mu_{1}=[0,1]\times \left\{\pi\right\}, \,   \supp\mu_{1/2}=[0,1]\times \left\{\pi/2\right\} \cup [0,1]\times \left\{3\pi/2\right\}$. Hence, we have $\cH^1(\supp\mu_{1/2})=2$, $\cH^1(\supp\mu_0)=\cH^1(\supp\mu_1)=1$.
\end{remark}
\begin{proof}
(i) $\Longrightarrow$ (ii).  Let $\{\mu_{t}\}_{t\in [0,1]}$ be a countably $\cH^1$-rectifiable $W_2$-geodesic, i.e. for every $t\in [0,1]$ the probability  measure $\mu_{t}$ is concentrated on a countably  $\cH^{1}$-rectifiable set 
$\Sigma_{t}\subset M$ and is $\cH^{1}|_{\Sigma_t}$-absolutely continuous.  Also, thanks to Theorem \ref{thm:MongeMather} (see also Remark \ref{rem:MM}),  the $W_{2}$-geodesic $\{\mu_t\}_{t \in [0,1]}$ is given by Lipschitz optimal transport maps; more precisely there exist 
unique Lipschitz maps $T_{1/2}^{t}:\Sigma_{1/2} \to \Sigma_{t}$ such that $(T_{1/2}^{t})_{\sharp}{\mu_{1/2}}=\mu_{t}$ and $\pi_{0,1}:=(T_{1/2}^{0}, T_{1/2}^{1})_{\sharp} \mu_{1/2}$ is an
optimal coupling between $\mu_{0}$  and $\mu_{1}$. 
Let $\gamma_{x}(t):=T_{1/2}^t(x)\in \Geo(M)$ and $\gamma_x\circ\tau(s)=:\varsigma_x(s)$.

The map $x\mapsto (t\mapsto T_{1/2}^t(x))=:\gamma_x\in \Geo(X)$ yields a measurable map from $M$ into the space of geodesics $\Geo(X)$, and the push-forward of $\mu_{1/2}$ under 
this map is the associated optimal dynamical transport plan $\Pi$. In particular
\begin{align}\label{eq:trans}\int f(\gamma)\, d\Pi(\gamma)=\int f(\gamma_{x})\, d\mu_{1/2}(x)
\end{align}
for any non-negative measurable function $f:\Geo(X)\rightarrow [0,\infty]$.

Setting  $\cJ_{x}(t):=\det[DT_{1/2}^{t}(x)]$ for $\mu_{1/2}$-a.e. $x\in \Sigma_{1/2}$ and   making use of Lemma \ref{B} and Proposition \ref{prop:kconvex},  we can compute for every $ s \in [0,1]$: 
 \begin{align*}
\int_{\Sigma_{\tau(s)}}\rho_{\tau(s)}(x)^{-1}d\mu_{\tau(s)}(x)&= \int_{\Sigma_{1/2}} \rho_{\tau(s)}(T_{1/2}^{\tau(s)}(y))^{-1} \, d\mu_{1/2}(y)\\
& \overset{\eqref{eq:MAineq}}{=}
\int_{\Sigma_{1/2}} \cJ_x(\tau(s))d\mathcal{H}^1(x)\\
&\overset{ \eqref{inequ:integrated}}{\leq} 
\int_{\Sigma_{1/2}}\Big[\sigma_{K,1}^{(1-s)}(|\dot{\varsigma}_x|) \, \cJ_x(t_0)+\sigma_{K,1}^{(s)}(|\dot{\varsigma}_x|) \, \cJ_x(t_1)\Big]d\mathcal{H}^1(x)\\
&\overset{\eqref{eq:MAineq}}{=} 
\int_{\Sigma_{1/2}}\Big[\sigma_{K,1}^{(1-s)}(|\dot{\varsigma}_x|)\rho_{t_0}(T_{1/2}^{t_0}(x))^{-1}+\sigma_{K,1}^{(s)}(|\dot{\varsigma}_x|)\rho_{t_1}(T_{1/2}^{t_1}(x))^{-1}\Big]\rho_{1/2}(x) \, d\mathcal{H}^1(x)\\
&\overset{(\ref{eq:trans})}{=} \int\left[\sigma_{K,1}^{(1-s)}(|\dot{\gamma\circ\tau}|)\, \rho_{t_{0}}(\gamma(t_0))^{-1}+\sigma_{K,1}^{(s)}(|\dot{\gamma\circ\tau}|)\, \rho_{t_{1}}(\gamma(t_1))^{-1}\right]d\Pi(\gamma).
\end{align*}
Note that the assumption $t_{0},t_{1}\in (0,1)$ was used above in order to apply \eqref{eq:MAineq} with equality.

(ii) $\Longrightarrow$ (i). 
\\We argue by contradiction. Assume there exist $x_{0}\in M$, 
a line $P\subset T_{x_{0}}M$ and $0\neq v\in T_{x_{0}}M$ such that 
the $1$-Ricci curvature of $P$ in the direction of $v$ satisfies 
\begin{equation}
\ric_1(P,v)>(K+3\epsilon)|v|^2, 
\end{equation}
 for some $\epsilon>0$.
Let $\delta>0$ be sufficiently small such that $\exp_{x_{0}}|_{B_{\delta}(0)}$ is a diffeomorphism onto its image. Then
$\exp_{x_{0}}(P\cap B_{\delta}(0))=:\Sigma_{\scriptscriptstyle{\frac{1}{2}}}$ is a smooth $1$-dimensional submanifold. 
Let
$\phi\in C^{\infty}_0(M)$ be a Kantorovich potential such that 
\begin{equation}
\nabla \phi(x_{0}) = v \neq 0 \quad \text{ and }  \quad \nabla^2\phi(x_{0})=0. 
\end{equation}
By replacing $\phi$ with $\eta\phi$ for a sufficiently small number $\eta>0$ we get that $\phi$ is a Kantorovich potential as well and $|\nabla \phi|(y)$ 
is  smaller than the injectivity radius at $y$, for every $y\in \supp(\phi)\subset M$. It is easily checked that for $\delta>0$ small enough the map
$y\mapsto T_t(y)=\exp_y(-t\nabla \phi(y))$ is 
a diffeomorphism from $B_{\delta}(0)$ onto its image for any $t\in [-\frac{1}{2},\frac{1}{2}]$. Hence, $\Sigma_t:=T_{t-\frac{1}{2}}(\Sigma_{\frac{1}{2}})$ for $t\in [0,1]$ is a 1-parameter family 
of smooth $1$-dimensional submanifolds with finite $1$-dimensional Hausdorff measure. We define $\mu_{\frac 1 2 }:= \mathcal{H}^1(\Sigma_{\frac 1 2 })^{-1} \,\mathcal{H}^1 \llcorner \Sigma_{\frac 1 2}$;
note that  $\mu_{t}:=(T_{t-\frac{1}{2}})_{\sharp}\mu_{\frac  1 2}$, with $t \in [0,1]$, is the unique $L^2$-Wasserstein geodesic between $\mu_0$ and $\mu_1$. 
Moreover, by construction, $\mu_t$ is a $\mathcal{H}^1$-absolutely continuous probability measure concentrated on $\Sigma_{t}$.
\\

Calling $\gamma_{x}(t):=T_{t-\frac{1}{2}}(x)=\exp_{x}\big(-\big(t-\frac{1}{2}\big) \nabla \phi(x)\big)$ for $x \in \Sigma_{\frac 1 2}$ the geodesic performing the transport,  note  that by continuity  there exist  $\delta, \sigma>0$ small enough such that   
\begin{equation}\label{eq:AssAbsRicp0}
\ric_1(T_{\gamma_{x}(t)}\Sigma_{t}, \dot{\gamma}_{x}(t) )>  (K+2 \epsilon) |\dot{\gamma}_{x}(t)|^{2}, \quad \forall x \in \Sigma_{\frac 1 2}\subset B_{\delta}(x_{0}), \quad \forall  t \in \left[\frac{1}{2}-\sigma, \frac{1}{2}+\sigma \right] . 
\end{equation}
For every $x \in \Sigma_{\frac{1}{2}}$ note that  $\gamma_{x}(t):=T_{t-\frac{1}{2}}(x)$  is a  geodesic connecting  $T_{-\frac{1} {2}}(x)\in \Sigma_{0}$ to $T_{\frac{1}{2}}(x)\in \Sigma_{1}$.
Choose $e\in T_x\Sigma_{\frac{1}{2}}$,  consider the Jacobi field $J:[0,1]\rightarrow T_{\gamma_{x}(t)}M$ such that $J(\frac{1}{2})=e$ and $J'(\frac{1}{2})=[\nabla^2\phi(x)] e$, and set $|J(t)|^{-1}J(t)=E(t)$.  
We introduce again the linear operator ${\mathcal{U}}^{\top}(t)=D_tB_x(t)B_x(1)^{-1}$ where  $B_x(t):T_x\Sigma_{\frac 1 2}\rightarrow T_{\gamma_x(t)}M$ by $B_x(t)e:=J(t)=DT_{t-\frac{1}{2}}(x)e$. 
Then, as in Corollary \ref{cor:modric}, we get
\begin{equation}\label{eq:Jacobi0}
\langle {\mathcal{U}}^{\top}_x(t)E(t),E(t)\rangle' + \langle {\mathcal{U}}^{\top}_x(t)E(t),E(t)\rangle^2 + \ric_1(T_{\gamma(t)}\Sigma_t,\dot{\gamma}_t)= |(\mathcal{U}_x(t)E(t))^{\perp}|^2\leq |\cU_x(t)E(t)|^2.
\end{equation}
Since by construction $\mathcal{U}_{x_{0}}(0):=\nabla_{t}B_{x_{0}}(0)B_{x_{0}}^{-1}(0)= \nabla^2\phi (x_{0})|_{T_{x_{0}}\Sigma_{\frac 1 2}}=0$ and $v \neq 0$,  again by continuity we can choose $\delta,\sigma>0$ even smaller so that
\begin{equation}\label{eq:claim0}
|\mathcal{U}_x(t)E(t)|^2 < \epsilon |\dot{\gamma}_{x}(t)|^{2}, \quad \forall x \in \Sigma_{\frac 1 2} \subset B_{\delta}(x_{0}), \quad \forall t \in \left[  \frac 1 2 -\sigma , \frac 1 2 +\sigma  \right].
\end{equation}
The combination of  \eqref{eq:AssAbsRicp0}, \eqref{eq:Jacobi0} and   \eqref{eq:claim0} then yields
\begin{align*} 0 > \langle \cU^{\top}(t)E(t),E(t)\rangle' + \langle \cU^{\top}(t)E(t),E(t)\rangle^2+\left(K+  \epsilon  \right)|\dot{\gamma}_x(t)|^2, \; \forall x \in \Sigma_{\frac 1 2}, \; \forall t \in \left[  \frac 1 2 -\sigma , \frac 1 2 +\sigma  \right].
\end{align*}
Observe that the affine reparametrization $t=g(s)=  \frac 1 2 - \sigma + 2\sigma s$, $g:[0,1]\to \left[ \frac 1 2 - \sigma , \frac 1 2+ \sigma  \right]$, corresponds to consider the rescaled Kantorovich potential $2\sigma \phi$ in place of $\phi$ in the arguments above, and  thus gives  
\begin{align*}
0&> \langle \cU^{\top}(g(s))E(g(s)),E(g(s))\rangle' + \langle \cU^{\top}(g(s))E(g(s)),E(g(s))\rangle^2+\left(K + \epsilon  \right)|\dot{\gamma}_x(g(s))|^2\\
&\hspace{10cm}\forall x \in \Sigma_{\frac 1 2}, \; \forall s \in [0,1]. 
\end{align*}
Since $g$ is affine, the restricted and rescaled curve $\{\tilde{\mu}_{s}:=\mu_{g(s)}\}_{s \in [0,1]}$ is still a $W_{2}$-geodesic from $\tilde{\mu}_{0}=\mu_{\frac 1 2 -\sigma}$ to  $\tilde{\mu}_{1}=\mu_{1/2 +\sigma}$.
By repeating the arguments in the proof of (i)$\implies$(ii), with reversed inequalities and $K$ replaced by $K+\epsilon $,  we obtain
\begin{equation}
\int \Big[ \sigma_{K+ {\epsilon},1}^{\left(\frac{1}{2}\right)}(|\dot{\gamma\circ g}|) \tilde{\rho}_0(\gamma(1/2 -\sigma))^{-1} +\sigma_{K+ {\epsilon},1}^{\left(\frac{1}{2}\right)} (|\dot{\gamma\circ g}|) \tilde{\rho}_1(\gamma(1/2 +\sigma))^{-1} \Big] d\tilde{\Pi}(\gamma)<  \int_{\Sigma_{\frac 1 2}} \tilde{\rho}_{\frac 1 2}(y)^{-1} d \tilde{\mu}_{\frac 1 2}(y),
\end{equation}
where $\tilde{\Pi}$ is the optimal plan induced by the $W_{2}$-geodesic $\{\tilde{\mu}_s\}_{s \in [0,1]}$. 
Using that the distortion coefficients $\sigma_{K, 1}^{(t)}(\theta)$ are monotone increasing in $K$, we arrive to  contradict (ii) with $t=\frac 1 2$.
\end{proof}

We remind the reader that there is a notion of upper curvature bounds for geodesic metric spaces $(X,\sfd)$ that goes under the name ${\rm CAT}(K)$ for $K\in \mathbb{R}$ (see for instance \cite[Chapter 9]{bbi}). In case $K=0$, the condition reduces to require $1$-convexity of $\frac{1}{2}\sfd(y,\cdot)^2$ 
for any $y\in X$. For Riemannian
manifolds $(M,g_M)$ the condition ${\rm CAT}(K)$ for the induced metric space $(M,\sfd_M)$ implies an upper sectional curvature bound by $K$, moreover it also implies that geodesics are always extendible in case $K\leq 0$.  The next corollary then follows. 

\begin{corollary}
Let $(M,g)$ be a complete, simply connected Riemannian manifold without boundary. Then the following statements {\rm (i)} and {\rm (ii)} are equivalent: 
\begin{itemize}
\item[(i)] $(M,d_M)$ satisfies ${\rm CAT}(0)$.
\medskip
\item[(ii)]
Let $\{\mu_t\}_{t\in [0,1]}$ be a countably $\mathcal{H}^1$-rectifiable $W_2$-geodesic. Then
\begin{align*}
\cH^{1}(\supp \mu_{t})\leq (1-t)\cH^1(\supp\mu_0)+t\cH^1(\supp\mu_1),
\ \ \forall t \in [0,1].
\end{align*}
\end{itemize}
\end{corollary}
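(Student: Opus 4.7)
My plan is to deduce both directions from Theorem~\ref{thm:USB} (with $K=0$) combined with the Cartan--Hadamard theorem, which in the setting of a simply connected, complete Riemannian manifold identifies the sectional curvature bound $\sect\leq 0$ with the global CAT(0) property.

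For (ii)$\Rightarrow$(i): for any countably $\cH^1$-rectifiable $W_2$-geodesic $\{\mu_t\}_{t\in[0,1]}$ and any $t_0,t_1\in(0,1)$, the restricted--rescaled curve $\{\nu_s:=\mu_{(1-s)t_0+st_1}\}_{s\in[0,1]}$ is again a countably $\cH^1$-rectifiable $W_2$-geodesic, and applying hypothesis (ii) to $\nu$ produces exactly the $K=0$ inequality of Theorem~\ref{thm:USB}(ii) for $\mu$. Hence by Theorem~\ref{thm:USB} we have $\sect\leq 0$, and combined with simply-connectedness Cartan--Hadamard upgrades this to the global CAT(0) property.

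For (i)$\Rightarrow$(ii): since CAT(0) forces $\sect\leq 0$, Theorem~\ref{thm:USB}(ii) with $K=0$ supplies the desired convexity for endpoints $t_0,t_1\in(0,1)$ only; the task is to extend to $t_0=0$ and $t_1=1$. The obstruction exhibited by Remark~\ref{rem:SharpUB} relies crucially on non-trivial topology, so I exploit CAT(0)'s global structure: the exponential map is a global diffeomorphism and every Riemannian geodesic is globally minimizing. Concretely, let $\phi$ be a $d^2/2$-Kantorovich potential associated to $\{\mu_t\}$ and let $T^t(y):=\exp_y(-t\nabla\phi(y))$ be the underlying transport. Then $T^t$ is well-defined on $\supp\mu_0$ for every $t\in\R$; I set $\tilde\mu_t:=(T^t)_\sharp\mu_0$ for $t\in[-\epsilon,1+\epsilon]$ and reparametrize affinely onto $[0,1]$, so that the original endpoints $t=0,1$ become \emph{interior} parameters $s_0=\epsilon/(1+2\epsilon)$ and $s_1=(1+\epsilon)/(1+2\epsilon)$ in $(0,1)$. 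Applying Theorem~\ref{thm:USB}(ii) with $K=0$ to the reparametrized geodesic with these interior parameters and then unraveling the reparametrization yields exactly
\[
\cH^1(\supp\mu_t)\leq (1-t)\,\cH^1(\supp\mu_0)+t\,\cH^1(\supp\mu_1), \qquad t\in[0,1].
\]

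The main obstacle is to justify that $\{\tilde\mu_t\}_{t\in[-\epsilon,1+\epsilon]}$ is a genuine countably $\cH^1$-rectifiable $W_2$-geodesic, not merely a constant-speed curve produced by pushing along pointwise geodesics. $W_2$-optimality of the coupling $(T^{-\epsilon},T^{1+\epsilon})_\sharp\mu_0$ follows in Cartan--Hadamard because $\phi$ extends, through the Hopf--Lax semigroup, to a $c$-concave potential on the larger interval, ensuring $c$-cyclical monotonicity of the support; equivalently, concatenating globally minimizing pointwise geodesics preserves optimality. Once this is established, the absolute continuity of $\tilde\mu_{\pm\epsilon}$ follows from the Lipschitz and injective nature of $T^{\pm\epsilon}$ on $\supp\mu_0$ (a consequence of the global diffeomorphism property of $\exp$ and the semiconcavity of $\phi$), and Lemma~\ref{rem:mutPreiss} applied at the midpoint $t=1/2$, where $\tilde\mu_{1/2}=\mu_{1/2}$ is $\cH^1$-rectifiable by hypothesis, delivers the countable $\cH^1$-rectifiability of $\tilde\mu_t$ throughout $[-\epsilon,1+\epsilon]$.
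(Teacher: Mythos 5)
Your argument for (ii)$\Rightarrow$(i) is fine and matches the paper: restricting a countably $\mathcal{H}^1$-rectifiable $W_2$-geodesic to $[t_0,t_1]\subset(0,1)$ and invoking hypothesis (ii) recovers condition (ii) of Theorem~\ref{thm:USB} with $K=0$, which gives $\sect\leq 0$, and simply-connectedness then globalizes the local CAT(0) condition via Cartan--Hadamard.

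The direction (i)$\Rightarrow$(ii), however, has a genuine gap at precisely the point you flag as ``the main obstacle.'' The claim that pushing $\mu_0$ along the extended transport geodesics over $[-\epsilon,1+\epsilon]$ produces a bona fide $W_2$-geodesic is false, even in $\R^n$. Consider $\R^2$ with
\begin{equation*}
\mu_0=\tfrac12\,\cH^1\llcorner\bigl(\{0\}\times[-1,1]\bigr),\qquad
\mu_1=\tfrac14\,\cH^1\llcorner\bigl(\{1\}\times[-1,1]\bigr)+\tfrac14\,\cH^1\llcorner\bigl(\{-1\}\times[-1,1]\bigr).
\end{equation*}
The potential $\phi\equiv 1$ on $\supp\mu_0$, $\psi\equiv 0$ on $\supp\mu_1$ certifies that the unique optimal plan sends each $(0,y)$ with equal mass to $(\pm1,y)$; the resulting $W_2$-geodesic has $\mu_t=\tfrac14\cH^1\llcorner(\{t\}\times[-1,1])+\tfrac14\cH^1\llcorner(\{-t\}\times[-1,1])$ for $t\in(0,1]$, which is countably $\cH^1$-rectifiable in the sense of Definition~\ref{def:HpRectW2Geo}. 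Extending each transport geodesic by $\epsilon$ gives $\tilde\mu_{-\epsilon}=\mu_\epsilon$ (the two families of geodesics cross and ``unfold'' through $t=0$), and the induced coupling between $\tilde\mu_{-\epsilon}$ and $\tilde\mu_{1+\epsilon}$ is not optimal --- matching $(\epsilon,y)$ to $(1+\epsilon,y)$ rather than to $(-1-\epsilon,y)$ is strictly cheaper --- so $\{\tilde\mu_t\}$ is not a $W_2$-geodesic and no reparametrization rescues it. The Hopf--Lax argument you invoke cannot repair this: running the evolution backward past $t=0$ requires the Kantorovich potential to be strongly $c$-convex along $\supp\mu_0$, which is not implied by CAT(0) when $\mu_0$ is merely $\cH^1$-absolutely continuous (Brenier--McCann uniqueness needs $\mu_0\ll\cH^n$). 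More alarmingly, the same example shows the inequality in (ii) itself fails at the endpoints: $\cH^1(\supp\mu_{1/2})=4$ while $\tfrac12\cH^1(\supp\mu_0)+\tfrac12\cH^1(\supp\mu_1)=3$. This is exactly the branching phenomenon of Remark~\ref{rem:SharpUB}, except that here it occurs in a simply connected flat space, so the obstruction is not topological but rather the failure of the Monge--Amp\`ere \emph{equality} \eqref{eq:MAineq} at $t_0=0$ (the map $T_{1/2}^{\,0}$ need not be essentially injective). The paper's own one-line justification (``follows from the extendibility of geodesics'') appears to share this gap; either an additional hypothesis on the optimal coupling (that it be induced by a map from $\mu_0$ and from $\mu_1$) is needed, or the conclusion must be restricted to $t_0,t_1\in(0,1)$ as in Theorem~\ref{thm:USB}.
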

\begin{proof}
The implication (i)$\Rightarrow$(ii) follows from the extendibility of geodesics.
\\
The reverse implication follows from the reverse implication in Theorem \ref{thm:USB}. Indeed, the theorem implies that $M$ has non-positive sectional curvature, therefore the ${\rm CAT}(0)$-condition holds locally. Then, since $M$ is simply connected
the condition globalizes by  \cite[Theorem 9.2.9]{bbi}.
\end{proof}

\section{OT characterization of sectional, and more generally $p$-Ricci, curvature lower bounds}\label{sec:LB} 
Throughout the section,  $\{\mu_t\}_{t\in [0,1]}$ is a countably $\mathcal{H}^p$-rectifiable $W_2$-geodesic and  $\Pi$ is the corresponding dynamical optimal plan, i.e. 
$\mu_t=\rho_{t} \, \mathcal{H}^{p} \llcorner \Sigma_{t}$ where $\Sigma_{t}\subset M$ is a countably $\mathcal{H}^{p}$-rectifiable subset and $\rho_{t}\in L^{1}(M,\mathcal{H}^{p})$. 
From Theorem \ref{thm:MongeMather} (see also Remark \ref{rem:MM}),    we know that $\mu_{t}=(T^{t}_{1/2})_{\sharp} \mu_{1/2}$ with $T^{t}_{1/2}:\Sigma_{1/2}\to \Sigma_{t}$ is Lipschitz.  Lemma \ref{lem:vDiff} (see also Remark \ref{rem:defB})
ensures the existence of a  subset $N\subset \Sigma_{1/2}$, with $\mathcal{H}^{p}(N)=0$, such that $T^{t}_{1/2}$ is differentiable for every $x\in \Sigma_{1/2}\setminus N$ and we set
\begin{equation*}\label{eq:defBxt}
{B}_{x}(t): T_{x} \Sigma_{1/2} \to  T_{\gamma_{x}(t)} \Sigma_{t}, \quad {B}_{x}(t):=DT_{1/2}^{t}(x)
\quad \forall t\in [0,1], \; \forall x \in \Sigma_{1/2}\setminus N.
\end{equation*} 
Moreover $D_{t} B_x(t)|_{t=1/2}:T_x\Sigma_{1/2} \to T_x\Sigma_{1/2}$ is self-adjoint for every $x\in \Sigma_s\setminus N$.
\\Lemma \ref{B}  yields that $B_x(t)$ is invertible for every $t\in [0,1]$ for every $x\in \Sigma_{1/2}\setminus N$, up to enlarging the subset $N$.   Since $B_x(1/2)={\rm Id}$, it follows in particular that $\det[B_{x}(t)]>0$ for all $t \in [0,1]$. Now, for every $x\in \Sigma_{1/2}\setminus N$ and $t \in [0,1]$,  let $\gamma_{x}(t):=T^{t}_{1/2}(x)$ be the geodesic performing the transport  and consider 
\begin{align*}
\mathcal{U}_{x}(t)&:= (\nabla_tB_{x}(t)){B_{x}}(t)^{-1}: T_{\gamma_{x}(t)}\Sigma_t\rightarrow T_{\gamma_{x}(t)}M, \\
\mathcal{U}_{x}^{\top}(t)&:=[\mathcal{U}_{x}(t)]^{\top}= (D_t{B}_{x}(t)){B}_{x}(t)^{-1} :  T_{\gamma_{x}(t)}\Sigma_t\rightarrow T_{\gamma_{x}(t)}\Sigma_t, \\
\mathcal{U}_{x}^{\perp}(t)&:=[\mathcal{U}_{x}(t)]^{\perp}:  T_{\gamma_{x}(t)}\Sigma_t\rightarrow (T_{\gamma_{x}(t)}\Sigma_t)^{\perp},
\end{align*}
where $\nabla_{t}$ denotes the covariant derivative along $\gamma_{x}(t)$ in $M$ and $D_{t}:=\top \circ \nabla_{t}$, $\top$ being the orthogonal projection on $T_{\gamma_{x}(t)} \Sigma_{t}$ and $\perp$ being the orthogonal projection on the orthogonal complement $(T_{\gamma(t)}\Sigma_t)^{\perp}$ of $T_{\gamma(t)}\Sigma_t$.
For every $x\in \Sigma_{1/2}\setminus N$ such that  $|\dot{\gamma}_{x}|\neq 0$,  we define
 \begin{equation}\label{eq:defkappa}
 \kappa_{\gamma_{x}}:[0,|\dot{\gamma}_{x}|]\rightarrow \mathbb{R}, \quad  \kappa_{\gamma_{x}}(|\dot{\gamma_{x}}|\,t) \, | \dot{\gamma}_{x}|^2:= \left\|\mathcal{U}_{x}^{\perp}(t) \right\|^{2}, \quad \forall t \in [0,1],
 \end{equation}
 if $|\dot{\gamma}_{x}|=0$,  we set   $\kappa_{\gamma_{x}}(0)=0$. 
Observe that the map $[0,1]\ni t \mapsto   \kappa_{\gamma_{x}}(|\dot{\gamma_{x}}|\,t) \in \R$ is invariant under constant speed reparametrization of the geodesic $\gamma_{x}$. 

We now introduce the generalized distortion coefficients $\sigma_{\kappa}$ associated to a continuous function $\kappa:[0,\theta] \to \mathbb{R}$ (cf. \cite{ketterer4}). First of all, the generalized $\sin$-function associated to $\kappa$, denoted by $\sin_{\kappa}$,  is defined as the unique solution $v:[0,\theta] \to \R$ of the equation
\begin{align*}
v''+\kappa v=0 \ \ \& \ \ v(0)=0, \ v'(0)=1.
\end{align*}
The generalized distortion coefficients $\sigma_{\kappa}^{(t)}(\theta)$, for $t \in [0,1]$ and $\theta>0$, are defined as 
\begin{align}\label{eq:defGensigma}
\sigma_{\kappa}^{(t)}(\theta):=\begin{cases}
                              \frac{\sin_{\kappa}(t\theta)}{\sin_{\kappa}(\theta)} \ \ & \ \ \text{if} \ \sin_{\kappa}(s \theta)>0 \; { \text{ for all } s \in [0,1], }\\
                            {  \infty } \ \ & \ \ \mbox{otherwise}.
                              \end{cases}
\end{align}
Using Sturm-Picone comparison Theorem one can check that (see for instance \cite [Proposition 3.4]{ketterer4})  
\begin{align}\label{eq:Monotsigma}
\kappa_{1}\leq \kappa_{2}  \text{ on } [0,\theta]  \quad \Longrightarrow \quad    \sigma_{\kappa_{1}}^{(t)}(\theta)\leq  \sigma_{\kappa_{2}}^{(t)}(\theta) \quad \forall  t \in [0,1].
\end{align}
Moreover, by the strong maximum principle (see for instance \cite [XVIII]{Walter}), it holds
\begin{align}\label{eq:Strictmonotsigma}
\kappa_{1} <  \kappa_{2}  \text{ on } (0,\theta) \; \; \& \; \;  \sigma_{\kappa_{1}}^{(\cdot)}(\theta) \not \equiv \infty  \;  \quad \Longrightarrow \quad    \sigma_{\kappa_{1}}^{(t)}(\theta)< \sigma_{\kappa_{2}}^{(t)}(\theta),  \quad \forall  t \in (0,1).
\end{align}
\noindent
It is convenient to also set $\sigma_{\kappa}^{(\cdot)}(0)\equiv 1$,
$\kappa^-(t):=\kappa(\theta-t)$ and $\kappa^{+}(t):=\kappa(t)$.
\\If $v_0,v_1\in [0,\infty)$, a straightforward computation gives that  $v(t):=\sigma^{(1-t)}_{\kappa^-}(\theta)v_0 + \sigma_{\kappa^+}^{(t)}(\theta)v_1$ solves 
\begin{align}\label{equ:SL}
v''(t)+\kappa(t\theta)\theta^2v=0,\, \forall t\in (0,1)\mbox{ with }v(0)=v_0 \ \& \ v(1)=v_1,
\end{align}
provided $t\in [0,1]\mapsto \sigma_{\kappa^+}^{(t)}(\theta)$ (or, equivalently, $t\in [0,1]\mapsto \sigma_{\kappa^-}^{(t)}(\theta)$) is real-valued. 
\\By \cite[Proposition 3.8]{ketterer4}, if $u:[0,1]\to (0,\infty)$ with $u \in C^{0}([0,1])\cap C^{2}((0,1))$   satisfies
\begin{align}\label{eq:SubSol}
u''(t)+\kappa(t\theta)\theta^2 \, u(t)\leq 0,\, \forall t\in (0,1)\mbox{ with }u(0)=v_0 \ \& \ u(1)=v_1 \quad \Longrightarrow \quad u\geq v \text{ on } [0,1].
\end{align}
It also convenient to consider a slightly different comparison function. To this aim we define the function ${\rm g}:[0,1]\times [0,1]\to [0,1]$ by
\begin{equation}
  \label{eq:defgreen}
  {\rm g}(s,t):=
  \begin{cases}
    (1-s)t&\text{if }t\in[0,s],\\
    s(1-t)&\text{if }t\in [s,1],
  \end{cases}
\end{equation}
so that for all $s\in (0,1)$ one has
\begin{equation}
  \label{eq:greenindentity}
  -\frac{\partial^2}{\partial t^2}  {\rm g}(s,t) =\delta_{s}\quad\text{in $\mathscr D'(0,1)$},\qquad 
   {\rm g}(s,0)= {\rm g}(s,1)=0.
\end{equation}
Given $w_0,w_1\in [0,\infty)$ and a continuous function $u:[0,1] \to [0,\infty)$, a straightforward computation gives that  $w(t):=(1-t) w_{0} + t w_{1} +\int_{0}^{1} {\rm g}(s,t)\,   u(s) \, d s$ solves 
\begin{align}\label{equ:SLw}
w''(t)+ u(t)=0,\, \forall t\in (0,1)\mbox{ with }w(0)=w_0 \ \& \ w(1)=w_1.
\end{align}

\begin{theorem}[OT Characterization of curvature lower bounds]\label{th:lower}
Let $(M,g)$ be a complete Riemannian manifold with $\partial M=\emptyset$ and let $K\in\mathbb{R}$. Then the following statements are equivalent: 
\begin{itemize}
\item[(i)] $\ric_{p}\geq K$.
\medskip
\item[(ii)]
Let $\{\mu_t\}_{t\in [0,1]}$ be a countably $\mathcal{H}^p$-rectifiable $W_2$-geodesic, and let $\Pi$ be the corresponding dynamical optimal plan. Then, for any $p'\geq p$, it holds
\begin{align}\label{eq:Sp'mutHpThm}
S_{p'}(\mu_t|\mathcal{H}^p)\leq - \int \left[\sigma_{(K-\kappa_{\gamma}^{-})/p'}^{(1-t)}(|\dot{\gamma}|)\, \rho_0^{-\frac{1}{p'}}(\gamma(0)) + \sigma_{(K-\kappa_{\gamma}^+)/p'}^{(t)}(|\dot{\gamma}|) \, \rho_1^{-\frac{1}{p'}}(\gamma(1))\right]d\Pi(\gamma), \;
\forall t\in[0,1]
\end{align}
where $\kappa_{\gamma}$ was defined in \eqref{eq:defkappa} and the generalized distortion coefficients $\sigma$ are as in \eqref{eq:defGensigma}.
\item[(iii)]
Let $\{\mu_t\}_{t\in [0,1]}$ and  $\Pi$ be as in (ii). Then
\begin{align*}
\Ent(\mu_t|\cH^{p})\leq (1-t)\Ent(\mu_0|\cH^{p})+t\Ent(\mu_1|\cH^{p}) - \int \int_0^1 {\rm g}(s,t)\, |\dot{\gamma}|^2 \,  (K-\kappa_{\gamma}(s|\dot{\gamma}|) )\,  ds \, d\Pi(\gamma), \; \forall t\in[0,1]
\end{align*}
where ${\rm g}(s,t)$ was defined in \eqref{eq:defgreen}.
\end{itemize}
\end{theorem}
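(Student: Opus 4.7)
The overall plan is to follow the classical optimal transport strategy of Sturm-Lott-Villani \cite{SVR, lottvillani:metric, stugeo1, stugeo2, CMS}, adapted to the intermediate $p$-Ricci setting via the refined Jacobi field inequality of Proposition \ref{cor:imp}. Fix a countably $\cH^p$-rectifiable $W_2$-geodesic $\{\mu_t\}$ with plan $\Pi$. By Lemma \ref{B} and Lemma \ref{lem:vDiff}, for $\mu_{1/2}$-a.e.\ $x$ the differential $B_x(t):=DT_{1/2}^t(x)$ is invertible on $[0,1]$ and $D_tB_x(t)|_{t=1/2}$ is self-adjoint; setting $y_x(t):=\log\det B_x(t)$, Proposition \ref{cor:imp} combined with $\ric_p\geq K$ yields the key pointwise inequality
\begin{equation*}
y_x''(t)+\tfrac{1}{p}(y_x'(t))^2 \;\leq\; \big(\kappa_{\gamma_x}(t|\dot\gamma_x|)-K\big)|\dot\gamma_x|^2,\qquad t\in(0,1),
\end{equation*}
which will be the pointwise engine of the whole argument.

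For the implication $(\mathrm{i})\Rightarrow(\mathrm{ii})$, set $u_x(t):=e^{y_x(t)/p'}=\cJ_x(t)^{1/p'}$ for $p'\geq p$. A direct calculation gives $u_x''/u_x=y_x''/p'+(y_x'/p')^2$; combined with the above inequality and the observation $\tfrac{1}{(p')^2}\leq \tfrac{1}{pp'}$ (valid since $p'\geq p$), this shows $u_x$ is a positive $C^2$-subsolution of \eqref{equ:SL} with effective potential $\kappa_{\rm eff}:=(K-\kappa_{\gamma_x})/p'$ and $\theta=|\dot\gamma_x|$. The comparison principle \eqref{eq:SubSol} then yields
\begin{equation*}
u_x(t)\geq \sigma^{(1-t)}_{(K-\kappa_{\gamma_x}^-)/p'}(|\dot\gamma_x|)\,u_x(0)+\sigma^{(t)}_{(K-\kappa_{\gamma_x}^+)/p'}(|\dot\gamma_x|)\,u_x(1).
\end{equation*}
The Monge-Amp\`ere identity \eqref{eq:MAineq} gives $\rho_t(\gamma_x(t))^{-1/p'}=u_x(t)\,\rho_{1/2}(x)^{-1/p'}$ for $t\in(0,1)$ and the one-sided version $\rho_i(\gamma_x(i))^{-1/p'}\leq u_x(i)\,\rho_{1/2}(x)^{-1/p'}$ at $i\in\{0,1\}$ (both in the direction needed for the argument). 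Multiplying by $\rho_{1/2}(x)^{-1/p'}$, integrating against $\mu_{1/2}$, and rewriting via $\int f(\gamma)\,d\Pi=\int f(\gamma_x)\,d\mu_{1/2}(x)$ yields \eqref{eq:Sp'mutHpThm}.

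For $(\mathrm{i})\Rightarrow(\mathrm{iii})$, the weaker consequence $y_x''\leq (\kappa_{\gamma_x}(t|\dot\gamma_x|)-K)|\dot\gamma_x|^2$ of the key pointwise inequality suffices. Define $\Phi_x(t):=y_x(t)-(1-t)y_x(0)-t\,y_x(1)$ and $\Psi_x(t):=\int_0^1{\rm g}(s,t)(K-\kappa_{\gamma_x}(s|\dot\gamma_x|))|\dot\gamma_x|^2\,ds$; both vanish at the endpoints, and $\Psi_x''(t)=-(K-\kappa_{\gamma_x}(t|\dot\gamma_x|))|\dot\gamma_x|^2$ by \eqref{eq:greenindentity}. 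Hence $(\Phi_x-\Psi_x)''\leq 0$, so $\Phi_x-\Psi_x$ is concave with zero boundary values and therefore nonnegative on $[0,1]$. Combining with the identity $\Ent(\mu_t|\cH^p)=\int[\log\rho_{1/2}(x)-y_x(t)]\,d\mu_{1/2}(x)$ for $t\in(0,1)$ and the one-sided inequality $\Ent(\mu_i|\cH^p)\geq \int[\log\rho_{1/2}(x)-y_x(i)]\,d\mu_{1/2}(x)$ at $i\in\{0,1\}$ (both coming from \eqref{eq:MAineq}) and integrating $\Phi_x\geq \Psi_x$ against $\mu_{1/2}$ gives (iii).

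Both converses $(\mathrm{ii})\Rightarrow(\mathrm{i})$ and $(\mathrm{iii})\Rightarrow(\mathrm{i})$ proceed by contradiction, replicating the strategy used in Theorem \ref{thm:USB}. If $\ric_p(P_0,v_0)<(K-3\epsilon)|v_0|^2$ at some $(x_0,P_0,v_0)$ with $\dim P_0=p$, take $\Sigma_{1/2}:=\exp_{x_0}(P_0\cap B_\delta(0))$ together with a smooth compactly supported Kantorovich potential $\phi$ with $\nabla\phi(x_0)=v_0$ and $\nabla^2\phi(x_0)=0$. After a small rescaling $\phi\mapsto\eta\phi$, the Hamilton-Jacobi flow defines a smooth $\cH^p$-rectifiable $W_2$-geodesic along which $\mathcal U_x(1/2)$ vanishes at $x_0$; on a sufficiently short time window and spatial neighbourhood, $\kappa_{\gamma_x}<\epsilon$ while $\ric_p<(K-2\epsilon)|\dot\gamma|^2$ uniformly. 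Running the direct argument with reversed inequalities and $K$ replaced by $K-\epsilon$, and invoking the strict monotonicity \eqref{eq:Strictmonotsigma} of the generalized distortion coefficients in $\kappa$, contradicts (ii) (respectively (iii)) at $t=1/2$. The main obstacle throughout is the correction term $\|\mathcal U^\perp(t)\|^2=\kappa_{\gamma_x}(t|\dot\gamma_x|)|\dot\gamma_x|^2$ on the right-hand side of Proposition \ref{cor:imp}: having no definite sign and not being controlled by the $p$-Ricci hypothesis, it must be \emph{absorbed} into the generalized distortion coefficient $\sigma_{(K-\kappa_\gamma)/p'}$ in the direct direction and made quantitatively \emph{negligible} at the contradiction scale in the converse---which is precisely why the Kantorovich potential is chosen with vanishing Hessian at $x_0$.
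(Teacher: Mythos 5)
Your proof is correct and follows essentially the same route as the paper: the direct implications rest on the pointwise Riccati-type inequality from Proposition \ref{cor:imp} combined with $\ric_p\geq K$ and the Monge--Amp\`ere relation of Lemma \ref{B}, and the converses proceed by contradiction via a smooth $p$-dimensional test submanifold $\Sigma_{1/2}=\exp_{x_0}(P_0\cap B_\delta(0))$ together with a Kantorovich potential whose Hessian vanishes at $x_0$ (so that $\kappa_{\gamma_x}$ can be made smaller than $\epsilon$ while $\ric_p$ falls below $K-2\epsilon$ uniformly), finishing with strict monotonicity of $\sigma_\kappa$. The only cosmetic differences from the paper's argument are that you pass through $u_x''/u_x = y_x''/p' + (y_x'/p')^2 \leq \tfrac{1}{p'}(y_x''+\tfrac{1}{p}(y_x')^2)$ rather than first replacing $1/p$ by $1/p'$ in the scalar inequality for $y_x$ and then exponentiating (these are the same computation), and that your $\epsilon$ ladder starts one step lower than the paper's ($3\epsilon$ versus $4\epsilon$), neither of which affects the argument.
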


\begin{remark}\label{rem:thmSharp}
We emphasize that Theorem \ref{th:lower} is sharp. First of all, one can not omit the correction term $\kappa_{\gamma}$: even in $\mathbb{R}^n$, the convexity of $S_{p}$ is not true in general. 
For instance consider $\mathbb{R}^2$ and the line segment $\left\{(t,\frac{1}{2}t):t\in [0,1]\right\}=:L_0$ and let $\mu_0=\mathcal{H}^1|_{L_0}$; similarly, define $\mu_1=\mathcal{H}^1|_{L_1}$ where $L_1:=\left\{(t,-\frac{1}{2}t):t\in [0,1]\right\}$. 
Then, it is easy to check that the 
optimal transport between $\mu_0$ and $\mu_1$ is supported on geodesics that connect $(t,\frac{1}{2}t)$ and $(t,-\frac{1}{2}t)$ and $\mu_{1/2}$ is exactly $\mathcal{H}^{1}|_{[0,1]\times\left\{0\right\}}$.
If Theorem \ref{th:lower} would hold with $K=0$ and $\kappa_{\gamma} \equiv 0$,  then the Brunn-Minkowski inequality (see Corollary \ref{cor:BM} below) would contradict that  the $\mathcal{H}^1$-measure of $[0,1]\times \left\{0\right\}$ is strictly smaller than the one of $L_0$ and $L_1$.
\\Second, we stress that the arguments in the proof of  Theorem \ref{th:lower}  are sharp, since for this example all the inequalities become identities (for the details see Remark \ref{rem:thmisharp}  after the proof).
\end{remark}

The proof of Theorem \ref{th:lower} will make use of the next proposition.
\begin{proposition}\label{C}
Let $M$ be a complete Riemannian $n$-dimensional manifold without boundary. Assume that   $\ric_{p}\geq K$, for some $p\in \{1,\ldots,n\}$ and $K\in \R$, and consider a countably $\mathcal{H}^p$-rectifiable $W_{2}$-geodesic $\{\mu_t\}_{t\in [0,1]}$.  
\\Then, using the notation recalled at the beginning of Section \ref{sec:LB}  and denoting  $\mathcal{J}_x(t):=\det [B_x(t)]$, it holds
\begin{equation}%
\frac{d^2}{dt^2}\mathcal{J}^{\frac{1}{p'}}_x(t)\leq - \frac{\left(K-\kappa_{\gamma_x(t)}\right)}{p'}|\dot{\gamma}_{x}|^2 \mathcal{J}^{\frac{1}{p'}}_x(t),  \; \forall x\in \Sigma_{1/2}\setminus N, \, \mu_{1/2}(N)=0, \; \forall p'\geq p, \; \forall t \in (0,1),
\end{equation}
and thus
\begin{align}
\cJ^{\frac{1}{p'}}_x(t)& \geq \sigma_{\frac{K-\kappa^-_{\gamma_x}}{p'}}^{(1-t)}(|\dot{\gamma}_{x}|) \, \cJ^{\frac{1}{p'}}_x(0)+\sigma_{\frac{K-\kappa_{\gamma_x}^+}{p'}}^{(t)}(|\dot{\gamma}_{x}|) \, \cJ^{\frac{1}{p'}}_x(1) . \quad   \forall x \in \Sigma_{1/2}\setminus N, \; \forall t \in [0,1], \;  \forall p'\geq p.  \label{eq:Jp'sigmamain} 
\end{align}
\end{proposition}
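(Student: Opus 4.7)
I would start from the Jacobi-field identity in Proposition \ref{cor:imp} applied along the geodesic $\gamma_x$, turn it into a Riccati-type differential inequality for $\log \mathcal{J}_x$, and then transfer it to $\mathcal{J}_x^{1/p'}$ by exploiting that $p' \geq p$ allows one to absorb the quadratic term via an arithmetic comparison. The final bound \eqref{eq:Jp'sigmamain} would follow from the Sturm--Picone comparison principle \eqref{eq:SubSol}.

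\textbf{Step 1: Riccati inequality.} Fix $x \in \Sigma_{1/2}\setminus N$ and set $y_x(t) := \log \mathcal{J}_x(t)$. This is well-defined and $C^2$ on $(0,1)$ since, by Lemma \ref{B}, $B_x(t)$ is invertible (so $\mathcal{J}_x(t) > 0$) and Jacobi fields depend smoothly on $t$. By Remark \ref{rem:defB}, $D_t B_x|_{t=1/2}$ is self-adjoint, so Proposition \ref{cor:imp} applies and \eqref{eq:eqdiffyx} holds. Using $\ric_p \geq K$ (applied to $T_{\gamma_x(t)}\Sigma_t$ in the direction $\dot{\gamma}_x(t)$, which gives a lower bound $K|\dot{\gamma}_x|^2$) and the definition \eqref{eq:defkappa} of $\kappa_{\gamma_x}$, one obtains
\begin{equation*}
y_x''(t) + \tfrac{1}{p}\,y_x'(t)^2 + \big(K - \kappa_{\gamma_x}(t|\dot{\gamma}_x|)\big)|\dot{\gamma}_x|^2 \leq 0 \quad \text{on } (0,1).
\end{equation*}

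\textbf{Step 2: Concavity of $\mathcal{J}_x^{1/p'}$.} Set $u_x(t) := \mathcal{J}_x(t)^{1/p'} = e^{y_x(t)/p'}$. Then
\begin{equation*}
\frac{u_x''(t)}{u_x(t)} = \frac{y_x''(t)}{p'} + \frac{y_x'(t)^2}{(p')^2} \leq \frac{1}{p'}\left(y_x''(t) + \frac{y_x'(t)^2}{p}\right),
\end{equation*}
where the inequality uses $(p')^{-2} \leq (p\,p')^{-1}$, which is valid since $p' \geq p$. Combining this with Step 1 yields exactly the first claim
\begin{equation*}
\frac{d^2}{dt^2}\,\mathcal{J}_x^{1/p'}(t) \leq -\frac{K - \kappa_{\gamma_x}(t|\dot{\gamma}_x|)}{p'}\,|\dot{\gamma}_x|^2\, \mathcal{J}_x^{1/p'}(t).
\end{equation*}

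\textbf{Step 3: Comparison and endpoint bound.} Writing $\theta := |\dot{\gamma}_x|$ and $\tilde{\kappa}(s) := (K - \kappa_{\gamma_x}(s))/p'$ for $s \in [0,\theta]$, Step 2 reads $u_x''(t) + \tilde{\kappa}(t\theta)\,\theta^2\, u_x(t) \leq 0$ on $(0,1)$. The comparison principle \eqref{eq:SubSol} from \cite[Proposition 3.8]{ketterer4} then gives $u_x(t) \geq v(t)$, where $v$ is the unique solution of \eqref{equ:SL} associated to $\tilde{\kappa}$ with endpoint data $v(0) = u_x(0)$ and $v(1) = u_x(1)$; the explicit formula stated just before \eqref{equ:SL} identifies $v$ as the right-hand side of \eqref{eq:Jp'sigmamain}, since by construction $\tilde{\kappa}^\pm = (K - \kappa_{\gamma_x}^\pm)/p'$.

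\textbf{Main obstacle.} The principal subtlety is ensuring that the chain of pointwise inequalities is genuinely valid at $\mu_{1/2}$-a.e.\ $x$ with enough regularity in $t$ to invoke the Sturm comparison; this reduces to combining the already established ingredients, namely the non-degeneracy of $B_x(t)$ on all of $[0,1]$ (Lemma \ref{B}), the differentiability statements of Lemma \ref{lem:vDiff}, and the self-adjointness in Remark \ref{rem:defB}. Degenerate cases, namely geodesics with $|\dot{\gamma}_x| = 0$ (for which $\mathcal{J}_x \equiv 1$) and situations in which $\sin_{\tilde{\kappa}}$ vanishes inside $(0,\theta)$ (making the generalized distortion coefficients infinite by \eqref{eq:defGensigma}), are handled trivially.
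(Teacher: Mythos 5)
Your proposal is correct and follows essentially the same route as the paper: pass to $y_x=\log\mathcal J_x$, invoke \eqref{eq:eqdiffyx} from Proposition~\ref{cor:imp} (whose hypothesis of self-adjointness of $D_tB_x|_{t=1/2}$ is granted by Remark~\ref{rem:defB}), plug in $\ric_p\geq K$ and the definition of $\kappa_{\gamma_x}$ to get a Riccati inequality, pass from $1/p$ to $1/p'$ using $p'\geq p$, rewrite this as a one-dimensional Sturm--Liouville-type differential inequality for $u_x=\mathcal J_x^{1/p'}$, and close with the comparison principle \eqref{eq:SubSol}. The only place you are too quick is at the very end: the scenario where $\sin_{\tilde\kappa}$ vanishes inside $(0,\theta)$, i.e.\ where the generalized coefficients $\sigma_{(K-\kappa_\gamma^{\pm})/p'}^{(t)}(|\dot\gamma_x|)$ are infinite, is not ``handled trivially'' — if it did occur, the right-hand side of \eqref{eq:Jp'sigmamain} would be $+\infty$ while the left-hand side is finite and positive, and the comparison step would collapse. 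The correct resolution, and the one the paper uses, is to observe that this case cannot occur: since $\mathcal J_x^{1/p'}$ is a strictly positive subsolution of $u''+\tilde\kappa(\cdot\,\theta)\theta^2 u\leq 0$ on $[0,1]$, step~3 of the proof of \cite[Proposition~3.8]{ketterer4} shows that the generalized distortion coefficients are automatically finite, so \eqref{eq:SubSol} applies and yields \eqref{eq:Jp'sigmamain}. Incorporating this remark would make your argument complete.
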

\begin{proof}
If we set $y_x(t)=\log \cJ_x(t)=\log\mbox{det}B_x(t)$, from \eqref{eq:eqdiffyx} in Proposition \ref{cor:imp} we know that 
\begin{equation*}
y_{x}''(t)+ \frac{1}{p}y_{x}'(t)^2  +  \ric_p(T_{\gamma_{x}(t)}\Sigma_t,\dot{\gamma_{x}}(t)) - \| \mathcal{U}^{\perp}(t)  \|^{2} \leq 0, \quad   \forall t \in (0,1), \quad  \forall x \in \Sigma_{1/2}\setminus N.
\end{equation*}
Plugging the assumption  $ \ric_p\geq K$ together with the definition \eqref{eq:defkappa} of  $\kappa_{\gamma_{x}}$, we get
\begin{align}\label{eq:yx''proof}
y_x''(t)+ \frac{1}{p'} y_x'(t)^2 + (K-\kappa_{\gamma_x}(t))|\dot{\gamma}_x(t)|^2\leq 0,  \quad  \forall t \in (0,1), \quad  \forall x \in \Sigma_{1/2}\setminus N, \quad  \forall p'\geq p,
\end{align}
which is equivalent to
\begin{align}\label{eq:j1p'proof}
(\cJ^{\frac{1}{p'}})''_x(t)+\frac{K-\kappa_{\gamma_x}(t)}{p'}|\dot{\gamma}_x(t)|^2\cJ^{\frac{1}{p'}}(t)\leq 0 , \quad  \forall t \in (0,1), \quad  \forall x \in \Sigma_{1/2}\setminus N, \quad  \forall p'\geq p.
\end{align}
The claimed  \eqref{eq:Jp'sigmamain} follows then by the comparison principle \eqref{eq:SubSol} and by Proposition 3.8 in \cite{ketterer4}.
More precisely,  in  step \textbf{3} of the 
proof of \cite[Proposition 3.8]{ketterer4} it is showed that  if $\cJ^{\frac{1}{p'}}$ satisfies \eqref{eq:j1p'proof} and  
$\cJ^{\frac{1}{p'}}(t)>0$ for some $t\in [0,1]$ then $\sigma_{(K-\kappa_{\gamma}^{\pm})/p}^{(t)}(|\dot{\gamma}|)<\infty$; the desired \eqref{eq:Jp'sigmamain} follows then from \eqref{eq:SubSol}.  
\end{proof}

\noindent
{\it Proof of Theorem \ref{th:lower}.} (i)$\implies$(ii). Let $\{\mu_t\}_{t\in [0,1]}$ be a countably $\mathcal{H}^p$-rectifiable geodesic.
Recall that for every $t \in [0,1]$ it holds $\mu_t=\rho_{t} \, {\mathcal H}^{p}\llcorner \Sigma_{t}=(T^{t}_{1/2})_{\sharp} \mu_{1/2}$. Let $\Pi$ be the optimal dynamical plan associated to the 
$W_{2}$-geodesic $\{\mu_t\}_{t\in [0,1]}$, i.e. $\mu_{t}=(\ee_{t})_{\sharp} \Pi$.
\\Setting  $\cJ_x(t)=\det B_x(t)= \det [DT_{1/2}^{t} (x)]$, for all $t\in (0,1)$ 
and $p'\geq p$ we get:
\begin{align*}
&\int_{\Sigma_{t}}\rho_t(y)^{-\frac{1}{p'}}d\mu_t(y)= \int_{\Sigma_{1/2}} \rho_t(T_{1/2}^{t}(x))^{-\frac{1}{p'}} \, d\mu_{1/2}(x) \\
&\overset{\eqref{eq:MAineq}}{=} \int_{\Sigma_{1/2}} \cJ^{\frac{1}{p'}}_x(t)\, \rho_{1/2}(x)^{1-\frac{1}{p'}} \,d\mathcal{H}^p(x)\\
&\overset{\eqref{eq:Jp'sigmamain}}{\geq} \int_{\Sigma_{1/2}}\Big[\sigma_{(K-\kappa_{\gamma_x})^-/p'}^{(1-t)}(|\dot{\gamma}_x|) \, \cJ^{\frac{1}{p'}}_x(0) \, \rho_{1/2}(x)^{1-\frac{1}{p'}}+\sigma_{(K-\kappa_{\gamma_x}^+)/p'}^{(t)}(|\dot{\gamma}_x|) \, \cJ^{\frac{1}{p'}}_x(1) \, \rho_{1/2}(x)^{1-\frac{1}{p'}}\Big]d\mathcal{H}^p(x)\\
&\overset{\eqref{eq:MAineq}}{\geq} \int_{\Sigma_{1/2}}\Big[\sigma_{(K-\kappa_{\gamma_x})^-/p'}^{(1-t)}(|\dot{\gamma}_x|)\rho_0(T_{1/2}^{0}(x))^{-\frac{1}{p'}}+\sigma_{(K-\kappa_{\gamma_x})^+/p'}^{(t)}(|\dot{\gamma}_x|)\rho_1(T_{1/2}^{1}(x))^{-\frac{1}{p'}}\Big]\rho_{1/2}(x) \, d\mathcal{H}^p(x)\\
&\overset{(\ref{eq:trans})}{=} \int\left[\sigma_{(K-\kappa_{\gamma})^-/p'}^{(1-t)}(|\dot{\gamma}|)\rho_0(\gamma(0))^{-\frac{1}{p'}}+\sigma_{(K-\kappa_{\gamma})^+/p'}^{(t)}(|\dot{\gamma}|)\rho_1(\gamma(1))^{-\frac{1}{p'}}\right]d\Pi(\gamma).
\end{align*}
This concludes the proof of \eqref{eq:Sp'mutHpThm} for  $t\in (0,1)$. In case $t=0$ or $t=1$ just observe that  from the  very definition \eqref{eq:defGensigma} it holds $\sigma_{(K-\kappa_{\gamma})^-/p'}^{(0)}(|\dot{\gamma}|)=0$ and $\sigma_{(K-\kappa_{\gamma})^-/p'}^{(1)}(|\dot{\gamma}|)=1$, so the claim \eqref{eq:Sp'mutHpThm} is trivially satisfied.
\smallskip

(ii)$\implies$(i).
\\ We argue by contradiction. Assume there exist $x_{0}\in M$, 
a $p$-dimensional plane $P\subset T_{x_{0}}M$ and $0\neq v\in T_{x_{0}}M$ such that 
the $p$-Ricci curvature of $P$ in the direction of $v$ satisfies 
\begin{equation}
\ric_p(P,v)\leq(K-4\epsilon)|v|^2, 
\end{equation}
 for some $\epsilon>0$.
Let $\delta>0$ be sufficiently small such that $\exp_{x_{0}}|_{B_{\delta}(0)}$ is a diffeomorphism onto its image. Then
$\exp_{x_{0}}(P\cap B_{\delta}(0))=:\Sigma_{\scriptscriptstyle{\frac{1}{2}}}$ is a smooth $p$-dimensional submanifold. 
Let
$\phi\in C^{\infty}_0(M)$ be a Kantorovich potential such that 
\begin{equation}\label{eq:AssPhiNonDeg}
\nabla \phi(x_{0}) = v \neq 0 \quad \text{ and }  \quad \nabla^2\phi(x_{0})=0. 
\end{equation}
By replacing $\phi$ with $\eta\phi$ for a sufficiently small number $\eta>0$ we get that $\phi$ is a Kantorovich potential as well and $|\nabla \phi|(y)$ 
is  smaller than the injectivity radius at $y$, for every $y\in \supp(\phi)\subset M$.  It is easily checked that for $\delta>0$ small enough the map
$y\mapsto T_t(y)=\exp_y(-t\nabla \phi(y))$ is 
a diffeomorphism from $B_{\delta}(0)$ onto its image for any $t\in [-\frac{1}{2},\frac{1}{2}]$. Hence, $\Sigma_t:=T_{t-\frac{1}{2}}(\Sigma_{\frac{1}{2}})$ for $t\in [0,1]$ is a 1-parameter family 
of smooth $p$-dimensional submanifolds with finite $p$-dimensional Hausdorff measure. We define $\mu_{\frac 1 2 }:= \mathcal{H}^p(\Sigma_{\frac 1 2 })^{-1} \,\mathcal{H}^p \llcorner \Sigma_{\frac 1 2}$;
note that  $\mu_{t}:=(T_{t-\frac{1}{2}})_{\sharp}\mu_{\frac  1 2}$, with $t \in [0,1]$, is the unique $L^2$-Wasserstein geodesic between $\mu_0$ and $\mu_1$. 
Moreover, by construction, $\mu_t$ is a $\mathcal{H}^p$-absolutely continuous probability measure concentrated on $\Sigma_{t}$.
\\Calling $\gamma_{x}(t):=T_{t-\frac{1}{2}}(x)=\exp_{x}\big(-\big(t-\frac{1}{2}\big) \nabla \phi(x)\big)$ for $x \in \Sigma_{\frac 1 2}$ the geodesic performing the transport,  note  that by continuity  there exist  $\delta, \sigma>0$ small enough such that  
\begin{equation}\label{eq:AssAbsRicp}
\ric_p(T_{\gamma_{x}(t)}\Sigma_{t}, \dot{\gamma}_{x}(t) )<  (K-3 \epsilon) |\dot{\gamma}_{x}(t)|^{2}, \quad \forall x \in \Sigma_{\frac 1 2}\subset B_{\delta}(x_{0}), \quad \forall  t \in \left[\frac{1}{2}-\sigma, \frac{1}{2}+\sigma \right] . 
\end{equation}
The identity \eqref{eq:truT'} proved in Proposition \ref{cor:imp} reads as
 \begin{align}\label{ricattiContr}
\tr[\mathcal{U}^{\top}_{x}(t)]'+ \tr[({\mathcal{U}}^{\top}_{x}(t)^2] + \ric_p(T_{\gamma_{x}(t)}\Sigma_t,\dot{\gamma}_{x}(t))= \|\mathcal{U}_{x}^{\perp}(t)\|^{2},  \quad \forall x \in \Sigma_{\frac 1 2}, \quad \forall t \in [0,1]. 
\end{align} 
Since by construction $\mathcal{U}_{x_{0}}(0):=\nabla_{t}B_{x_{0}}(0)B_{x_{0}}^{-1}(0)= \nabla^2\phi (x_{0})|_{T_{x_{0}}\Sigma_{\frac 1 2}}=0$ and $v \neq 0$,  again by continuity we can choose $\delta,\sigma>0$ even smaller so that
\begin{align}\label{eq:claim}
 \|\mathcal{U}_{x}^{\perp}(t)\|^{2}+  \tr[({\mathcal{U}}^{\top}_{x}(t)^2]=\sum_{i=1}^p\left[|\mathcal{U}^{\perp}(t)E_i|^2+|\mathcal{U}^{\top}(t)E_i|^2\right]=\sum_{i=1}^p|\mathcal{U}(t)E_i|^2= \|\mathcal{U}_{x}(t)\|^{2}< \epsilon |\dot{\gamma}_{x}(t)|^{2}
\end{align}
for all $x \in \Sigma_{\frac 1 2} \subset B_{\delta}(x_{0})$ and all $t \in \left[  \frac 1 2 -\sigma , \frac 1 2 +\sigma  \right].$
The combination of  \eqref{eq:AssAbsRicp}, \eqref{ricattiContr} and   \eqref{eq:claim} yields
\begin{align} \label{eq:TrnoU2}
0 \leq  \|\mathcal{U}_{x}^{\perp}(t)\|^{2}< \mbox{tr}[\mathcal{U}^{\top}_x(t)]'+  (K- 2\epsilon) |\dot{\gamma}_{x}(t)|^{2} \leq \mbox{tr}[\mathcal{U}^{\top}_x(t)]'+\frac{1}{p}\mbox{tr}[\mathcal{U}^{\top}_x(t)]^2+\left(K- 2 \epsilon  \right)|\dot{\gamma}_x(t)|^2, 
\end{align}
for all $x \in \Sigma_{\frac 1 2}$ and all $t \in \left[  \frac 1 2 -\sigma , \frac 1 2 +\sigma  \right].$
Observe that the affine reparametrization $t=g(s)=  \frac 1 2 - \sigma + 2\sigma s$, $g:[0,1]\to \left[ \frac 1 2 - \sigma , \frac 1 2+ \sigma  \right]$, corresponds to consider the rescaled Kantorovich potential $2\sigma \phi$ in place of $\phi$ in the arguments above, and  thus gives  
\begin{align*}
\mbox{tr}[\mathcal{U}^{\top}_x(g(s))]'+\frac{1}{p}\mbox{tr}[\mathcal{U}^{\top}_x(g(s))]^2+\left(K- 2 \epsilon  \right)|\dot{\gamma}_x(g(s))|^2 &= 4\sigma^{2} \left[ \mbox{tr}[\mathcal{U}^{\top}_x(t)]'+\frac{1}{p}\mbox{tr}[\mathcal{U}^{\top}_x(t)]^2+\left(K- 2 \epsilon  \right)|\dot{\gamma}_x(t)|^2    \right] \nonumber \\
&> 0 , \qquad  \forall x \in \Sigma_{\frac 1 2}, \; \forall s \in [0,1]. 
\end{align*}

Arguing as in the proof of Proposition \ref{C} (but with reversed inequalities), the last differential inequality gives
\begin{equation}\label{eq:Jp'sigmaRev}
\cJ^{\frac{1}{p}}_x(g(s))\leq \sigma_{\frac{K- 2 \epsilon}{p}}^{(1-t)}(|\dot{\gamma}_{x}\circ g|) \, \cJ^{\frac{1}{p}}_x(0)+\sigma_{\frac{K-2 \epsilon}{p}}^{(t)}(|\dot{\gamma}_{x}\circ g|) \, \cJ^{\frac{1}{p}}_x(1) , \quad   \forall x \in \Sigma_{1/2}\setminus N, \quad \forall s \in [0,1].
\end{equation}
Note in particular that, since $\cJ^{\frac{1}{p}}_x(g(s))>0$ for all $s \in [0,1]$, then $\sigma_{\frac{K- 2 \epsilon}{p}}^{(\cdot)}(|\dot{\gamma}_{x}\circ g|) \not \equiv 0$.

Since $g$ is affine, the restricted and rescaled curve $\{\tilde{\mu}_{s}:=\mu_{g(s)}\}_{s \in [0,1]}$ is still a $W_{2}$-geodesic from
$\tilde{\mu}_{0}=\mu_{\frac 1 2 -\sigma}$ to  $\tilde{\mu}_{1}=\mu_{\frac 1 2 +\sigma}$.
By repeating the arguments  in the proof of (i)$\implies$(ii), with reversed inequalities (note that \eqref{eq:MAineq} holds with equality since we are considering the interior of a geodesic,  
use \eqref{eq:Jp'sigmaRev} instead of  \eqref{eq:Jp'sigmamain}, and replace  $K-\kappa_{\gamma}$  by $K-2\epsilon $),  we obtain
\begin{equation}\label{ineq}
\int_{\Sigma_{\frac 1 2}} \tilde{\rho}_{\frac 1 2}(y)^{-\frac {1} {p}} d \tilde{\mu}_{\frac 1 2}(y) \leq \int \Big[ \sigma_{(K- 2\epsilon)/p}^{\left(\frac{1}{2}\right)}(|\dot{\gamma}|) \tilde{\rho}_0(\gamma(0))^{-\frac{1}{p}} +\sigma_{(K-2 {\epsilon})/p}^{\left(\frac{1}{2}\right)} (|\dot{\gamma}|) \tilde{\rho}_1(\gamma(1))^{-\frac{1}{p}} \Big] d\tilde{\Pi}(\gamma),
\end{equation}
where $\tilde{\Pi}$ is the dynamical  optimal associated to $\{\tilde{\mu}_{s}=\tilde{\rho}_{s} \cH^{p}\}_{s \in [0,1]}$.
\\
Observing that  \eqref{eq:claim} gives $\kappa_{\gamma}\leq\epsilon$ and noting that $\left\|\mathcal{U}^{\perp}_x(t)\right\|^2$ 
correctly scales when we apply the the reparametrization $g$),  using \eqref{eq:Monotsigma}  we get that \eqref{ineq} implies
\begin{align*}
\int_{\Sigma_{\frac 1 2}} \tilde{\rho}_{\frac 1 2}(y)^{-\frac {1} {p}} d \tilde{\mu}_{\frac 1 2}(y)& \leq \int \Big[ \sigma_{(K-\kappa_{\gamma_{x}}- \epsilon)/p}^{\left(\frac{1}{2}\right)}(|\dot{\gamma}|) \tilde{\rho}_0(\gamma(0))^{-\frac{1}{p}} +\sigma_{(K-\kappa_{\gamma_{x}}- {\epsilon})/p}^{\left(\frac{1}{2}\right)} (|\dot{\gamma}|) \tilde{\rho}_1(\gamma(1))^{-\frac{1}{p}} \Big] d\tilde{\Pi}(\gamma)    \nonumber\\
&\overset{\eqref{eq:Strictmonotsigma}} {<}  \int \Big[ \sigma_{(K-\kappa_{\gamma_{x}})/p}^{\left(\frac{1}{2}\right)}(|\dot{\gamma}|) \tilde{\rho}_0(\gamma(0))^{-\frac{1}{p}} +\sigma_{(K-\kappa_{\gamma_{x}})/p}^{\left(\frac{1}{2}\right)} (|\dot{\gamma}|) \tilde{\rho}_1(\gamma(1))^{-\frac{1}{p}} \Big] d\tilde{\Pi}(\gamma).
\end{align*}
This contradicts (ii) for the geodesic $\{\tilde{\mu}_{s}=\tilde{\rho}_{s} \cH^{p}\}_{s \in [0,1]}$.
\\

(i) $\Longrightarrow$ (iii).
\\For $t \in (0,1)$ we have
\begin{align}
\Ent(\mu_t|\cH^p) &=\int_{\Sigma_{t}}  \log\rho_{t}(y) \, d\mu_{t}(y) = \int_{\Sigma_{1/2}}  \log\rho_{t}(T_{1/2}^{t}(x)) \, d\mu_{1/2}(x)  \overset{ \eqref{eq:MAineq}}{=} \int_{\Sigma_{1/2}}  \log[\rho_{1/2}(x) \cJ_{t}(x)^{-1}]\, d\mu_{1/2}(x) \nonumber \\
&= \Ent(\mu_{1/2}|\cH^{p})-   \int_{\Sigma_{1/2}}  y_{x}(t)\, d\mu_{1/2}(x),  \label{eq:EntHpy}
\end{align}
where  $y_{x}(t)=\log( \cJ_{t}(x))$. Using  \eqref{eq:yx''proof} we obtain
\begin{equation}
\frac{d^{2}}{dt^{2}}\Ent(\mu_t|\cH^p)\geq \int_{\Sigma_{1/2}} \big( K- \kappa_{\gamma_{x}} (t) \big)  |\dot{\gamma}_{x}(t)|^{2}\, d\mu_{1/2}(x), \quad \forall t\in (0,1).
\end{equation}
We then get (iii) using \eqref{equ:SLw} and standard comparison.
\\

(iii) $\Longrightarrow$ (i). 
\\Assume that by contradiction \eqref{eq:AssAbsRicp} holds and repeat verbatim the first part of the proof of (ii) $\Longrightarrow$ (i) to reach \eqref{eq:TrnoU2}, i.e.
\begin{align*} \label{eq:TrnoU2inf}
\mbox{tr}[\mathcal{U}^{\top}_x(t)]'+  (K- 2\epsilon) |\dot{\gamma}_{x}(t)|^{2} >0, \quad \forall x \in \Sigma_{\frac 1 2}, \; \forall t \in \left[  \frac 1 2 -\sigma , \frac 1 2 +\sigma  \right].
\end{align*}
Considering as above  the affine reparametrization $t=g(s)=  \frac 1 2 - \sigma + 2\sigma s$, $g:[0,1]\to \left[ \frac 1 2 - \sigma , \frac 1 2+ \sigma  \right]$ and recalling \eqref{eq:y'}, we obtain
\begin{align*}
y_{x}(g(s))''+\left(K- 2 \epsilon  \right)|\dot{\gamma}_x(g(s))|^2> 0 , \qquad  \forall x \in \Sigma_{\frac 1 2}, \; \forall s \in [0,1]. 
\end{align*}
Calling as above $\{\tilde{\mu}_{s}:=\mu_{g(s)}\}_{s \in [0,1]}$ the corresponding rescaled  $W_{2}$-geodesic,  the combination of the last inequality with \eqref{eq:EntHpy} gives
\begin{equation}\label{eq:Entmugs''}
\frac{d^{2}}{ds^{2}}\Ent(\tilde{\mu}_s|\cH^{p})< \int_{\Sigma_{1/2}} \big( K- 2 \epsilon \big)  |\dot{\gamma}_{x}(g(s))|^{2}\, d\mu_{1/2}(x), \quad \forall s\in (0,1).
\end{equation}
Calling $\tilde{\Pi}$ the dynamical optimal plan associated to the geodesic  $\{\tilde{\mu}_{s}:=\mu_{g(s)}\}_{s \in [0,1]}$, using \eqref{equ:SLw} and standard comparison we get that
\begin{align*}
\Ent(\tilde{\mu}_s|\cH^{p})> (1-s)\, \Ent(\tilde{\mu}_0|\cH^{p})+s \, \Ent(\tilde{\mu}_1|\cH^{p}) - \int \int_0^1 {\rm g}(t,s)\, |\dot{\gamma}|^2 \,  (K- 2 \epsilon )\,  dt \, d\tilde{\Pi}(\gamma).
\end{align*}
Observing now that  \eqref{eq:claim} gives $\kappa_{\gamma}\leq\epsilon$ for $\tilde{\Pi}$-a.e. $\gamma$, we obtain
\begin{align*}
\Ent(\tilde{\mu}_s|\cH^{p})> (1-s)\, \Ent(\tilde{\mu}_0|\cH^{p})+s \, \Ent(\tilde{\mu}_1|\cH^{p}) - \int \int_0^1 {\rm g}(t,s)\, |\dot{\gamma}|^2 \,  (K- \kappa_{\gamma}(t |\dot{\gamma}|)-  \epsilon )\,  dt \, d\tilde{\Pi}(\gamma),
\end{align*}
which contradicts (iii) thanks to the strict positivity of  ${\rm g}$  on $(0,1)\times (0,1)$.
\qed
\begin{remark}\label{rem:thmisharp}
In order to show that Theorem \ref{th:lower} is sharp, we show that equality is achieved in \eqref{eq:Sp'mutHpThm} for the example of Remark \ref{rem:thmSharp},  $p'=1$.
First of all recall that, in euclidean spaces, the Jacobi fields are affine functions along the geodesics. The initial measure $\mu_0$ is supported on the
segment $\left\{(t,\frac{1}{2}t):t\in [0,1]\right\}$ that is generated by the unit vector $\frac{1}{\sqrt{5}}(2,1)=e$, and the final measure is supported on the segment $\left\{(t,-\frac{1}{2}t):t\in[0,1]\right\}$ that is 
generated by $-\frac{1}{\sqrt{5}}(2,1)$. Set $(\frac{2}{\sqrt{5}},0)=v$ and $(0,\frac{1}{\sqrt{5}})=w$. 
We have $$B_x(t)=J_e(t)= v+ (1-2t)w =\left(\frac{2}{\sqrt{5}},(1-2t)\frac{1}{\sqrt{5}}\right),$$
and $J'_e(t)=-2w=\left(0,-\frac{2}{\sqrt{5}}\right)$.
Clearly, $u(t)=(t-\frac{1}{2},1)$ is orthogonal to $J_e(t)$ for every $t\in [0,1]$. Thus, $$\left\|(J'_e(t))^{\perp}\right\|=\frac{1}{\left\|u(t)\right\|}\langle J'_e(t), u(t)\rangle=- \frac{2}{\sqrt{5(t^2-t+\frac{5}{4})}}.$$
Using the identity \eqref{eq:UtperpJe}, we get that
$$\kappa_{\gamma}(t|\dot{\gamma}_x|)|\dot{\gamma}_x|^2=\left\|(\mathcal{U}(t)E(t))^{\perp}\right\|^2=\left\|\left\|J_e(t)\right\|^{-1}(J'_e(t))^{\perp}\right\|^2=\frac{1}{\left(t^2-t+\frac{5}{4}\right)^{2}}=\kappa(t)$$
where $E(t)=\left\|J_e(t)\right\|^{-1}J_e(t)$. It follows that the  coefficient $\kappa_{\gamma}(t|\dot{\gamma}|)|\dot{\gamma}|^2$ does not depend on $\gamma$.  We thus get
\begin{align}\label{eq:H1ex}
\mathcal{H}^1(\supp \mu_t)=\sigma_{-\kappa^-}^{(1-t)}(1) \, \mathcal{H}^1(\supp \mu_0)+\sigma_{-\kappa^+}^{(t)}(1)\, \mathcal{H}^1(\supp \mu_1).
\end{align}
Indeed,  the $\mathcal{H}^1$-measure of the support of $\mu_{t}$ is given by the length of the Jacobi field $J_e(t)$ with the normalisation $\alpha>0$ 
such that $\alpha J_e(\frac{1}{2})=(1,0)$:
\begin{align*}
 \mathcal{H}^1(\supp \mu_t)=\sqrt{t^{2} -t+ \frac{5}{4}}.
\end{align*}
A straightforward computation shows that $t\mapsto  \mathcal{H}^1(\supp \mu_t)$  solves the boundary value problem $f''(t)=\kappa(t)f(t)$, $f(1)=f(0)=\mathcal{H}^1(\supp \mu_0)=\mathcal{H}^1(\supp \mu_1)=\frac{\sqrt{5}}{2}$, thus \eqref{eq:H1ex} follows.
\\Note that, for this example,  in the proof of Theorem \ref{th:lower} every inequality becomes an identity, showing the sharpness of the arguments.
\end{remark}

As an application of Theorem \ref{th:lower} we establish  a  new Brunn-Minkowski type inequality involving the $\cH^{p}$-measure and countably $\cH^{p}$-rectifiable sets. The main novelty is about the measure: the standard Brunn-Minkowski inequality involves the top Hausdorff measure $\cH^{n}$ in an $n$-dimensional Riemannian manifold. A second refinement is that, 
in comparison  with the standard Brunn-Minkowski inequality where one gives a lower bound on the measure of \emph{all intermediate points}, here we  give more sharply  a lower bound on the measure of just the  \emph{$t$-intermediate points where the optimal transport is performed} (let us mention that this was already the case in  \cite{stugeo2} even if not explicitly stated, but there one considers the top dimensional Hausdorff measure).
  
\begin{corollary}[$p$-Brunn-Minkowski inequality]\label{cor:BM}
Let $(M,g)$ be a complete $n$-dimensional Riemannian manifold without boundary. Assume that  $\ric_{p}\geq K$ for some $p\in \{1,\ldots,n\}$ and $K \in \R$.
Let $A_{0},A_{1}\subset M$ be bounded  $p$-rectifiable subsets with positive and finite $\cH^{p}$-measure. Set $\mu_i=\mathcal{H}^p(A_i)^{-1}\mathcal{H}^p \llcorner A_i$ for $i=0,1$ and assume that there exists a $W_{2}$-geodesic $\{\mu_{t}\}_{t \in [0,1]}$ such that for some $t_{0}\in (0,1)$ the measure $\mu_{t_{0}}$ is concentrated on a countably $\cH^{p}$-rectifiable subset $\Sigma_{t_{0}}\subset M$.

Then for every $t \in [0,1]$ one has $\mu_{t}=\rho_{t} \cH^{p} \in  \mathcal{P}_{c}(M,\cH^{p})$ and it holds
\begin{equation}\label{eq:BMHpPi}
\mathcal{H}^{p} \left( \{ \rho_{t} >0 \}\right)^{\frac{1}{p'}}\geq  \int \left[{\sigma_{(K-\kappa_{\gamma}^{-})/p'}^{(1-t)}(|\dot{\gamma}|)}\, \rho_0^{-\frac{1}{p'}}(\gamma(0)) + {\sigma_{(K-\kappa_{\gamma}^+)/p'}^{(t)}(|\dot{\gamma}|)  } \, \rho_1^{-\frac{1}{p'}}(\gamma(1))\right]d\Pi(\gamma), \quad \forall p'\geq p.
\end{equation}
In particular,  calling  
$$A_t:=\{ \gamma(t) \,:\, \gamma \in \Geo(X), \, \gamma(0)\in A_{0}, \, \gamma(1)\in A_{1}\}$$
 the set of all $t$-intermediate points of geodesics with endpoints in $A_0$ and $A_1$, it holds
\begin{equation}\label{eq:BMHp}
\mathcal{H}^{p}(A_t)^{\frac{1}{p'}}\geq  \int \left[{\sigma_{(K-\kappa_{\gamma}^{-})/p'}^{(1-t)}(|\dot{\gamma}|)}\, \rho_0^{-\frac{1}{p'}}(\gamma(0)) + {\sigma_{(K-\kappa_{\gamma}^+)/p'}^{(t)}(|\dot{\gamma}|)  } \, \rho_1^{-\frac{1}{p'}}(\gamma(1))\right]d\Pi(\gamma), \quad \forall p'\geq p.
\end{equation}
\end{corollary}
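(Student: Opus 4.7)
\medskip

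\noindent\textbf{Proof plan for Corollary \ref{cor:BM}.} The strategy is to reduce the statement to Theorem \ref{th:lower} combined with a Hölder-type bound relating $S_{p'}$ to the $\cH^{p}$-measure of the support. First, I would verify that the hypotheses of Lemma \ref{rem:mutPreiss} (equivalently Remark \ref{rem:mutRect}) are in force: since $\mu_0=\cH^{p}(A_0)^{-1}\cH^{p}\llcorner A_0$ and $\mu_1=\cH^{p}(A_1)^{-1}\cH^{p}\llcorner A_1$ both belong to $\mathcal{P}_{c}(M,\cH^{p})$, and the intermediate measure $\mu_{t_{0}}$ is by assumption concentrated on a countably $\cH^{p}$-rectifiable set, that lemma yields $\mu_{t}=\rho_{t}\,\cH^{p}\llcorner\Sigma_{t}\in\mathcal{P}_{c}(M,\cH^{p})$ for every $t\in[0,1]$ with $\Sigma_{t}$ countably $\cH^{p}$-rectifiable. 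In particular $\{\mu_{t}\}_{t\in[0,1]}$ is a countably $\cH^{p}$-rectifiable $W_{2}$-geodesic in the sense of Definition \ref{def:HpRectW2Geo}, so Theorem \ref{th:lower} applies.

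Next, using the hypothesis $\ric_{p}\geq K$, the implication (i)$\Rightarrow$(ii) of Theorem \ref{th:lower} gives, for every $p'\geq p$ and every $t\in[0,1]$,
\begin{equation*}
-S_{p'}(\mu_{t}|\cH^{p})\geq \int\left[\sigma_{(K-\kappa_{\gamma}^{-})/p'}^{(1-t)}(|\dot{\gamma}|)\,\rho_{0}^{-1/p'}(\gamma(0))+\sigma_{(K-\kappa_{\gamma}^{+})/p'}^{(t)}(|\dot{\gamma}|)\,\rho_{1}^{-1/p'}(\gamma(1))\right]d\Pi(\gamma).
\end{equation*}
On the other hand, writing $-S_{p'}(\mu_{t}|\cH^{p})=\int_{\{\rho_{t}>0\}}\rho_{t}^{1-1/p'}\,d\cH^{p}$ and applying Hölder's inequality with conjugate exponents $p'/(p'-1)$ and $p'$ to the functions $\rho_{t}^{1-1/p'}$ and $\mathbf{1}_{\{\rho_{t}>0\}}$ yields
\begin{equation*}
-S_{p'}(\mu_{t}|\cH^{p})\leq \left(\int_{\{\rho_{t}>0\}}\rho_{t}\,d\cH^{p}\right)^{\!1-1/p'}\cH^{p}(\{\rho_{t}>0\})^{1/p'}=\cH^{p}(\{\rho_{t}>0\})^{1/p'},
\end{equation*}
since $\mu_{t}$ is a probability measure. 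Chaining the two displays proves \eqref{eq:BMHpPi}.

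Finally, to pass from \eqref{eq:BMHpPi} to \eqref{eq:BMHp}, I would observe that the dynamical optimal plan $\Pi$ is concentrated on geodesics $\gamma$ with $(\gamma(0),\gamma(1))\in\supp\mu_{0}\times\supp\mu_{1}\subset A_{0}\times A_{1}$, and hence $\supp\mu_{t}=\ee_{t}(\supp\Pi)\subset A_{t}$. Consequently $\{\rho_{t}>0\}\subset A_{t}$ up to an $\cH^{p}$-negligible set, so $\cH^{p}(\{\rho_{t}>0\})\leq\cH^{p}(A_{t})$ and \eqref{eq:BMHp} follows from \eqref{eq:BMHpPi}. There is essentially no technical obstacle in this argument, as all the work has already been done in Theorem \ref{th:lower} and Lemma \ref{rem:mutPreiss}; the only point requiring care is to make sure Hölder's inequality goes in the correct direction, namely providing an \emph{upper} bound on $-S_{p'}$ by $\cH^{p}(\{\rho_{t}>0\})^{1/p'}$, which couples correctly with the \emph{lower} bound on $-S_{p'}$ furnished by Theorem \ref{th:lower}.
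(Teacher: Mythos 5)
Your proposal is correct and follows essentially the same route as the paper: reduce to Theorem~\ref{th:lower} via Lemma~\ref{rem:mutPreiss}, bound $-S_{p'}(\mu_t|\cH^p)$ from above by $\cH^p(\{\rho_t>0\})^{1/p'}$, and observe that $\{\rho_t>0\}\subset A_t$ up to an $\cH^p$-null set. The only cosmetic difference is that you invoke H\"older where the paper uses Jensen (the two are interchangeable here); your intermediate phrasing ``$\supp\mu_t=\ee_t(\supp\Pi)\subset A_t$'' is slightly imprecise since $A_t$ need not be closed, but the claim you actually use --- that $\Pi$ is concentrated on geodesics with endpoints in $A_0\times A_1$, hence $\mu_t$ is concentrated on $A_t$ and thus $\rho_t=0$ $\cH^p$-a.e.\ off $A_t$ --- is correct.
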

\begin{proof} From Lemma \ref{rem:mutPreiss} we know that  for every $t \in [0,1]$ it holds $\mu_{t}=\rho_{t} \cH^{p} \in  \mathcal{P}_{c}(M,\cH^{p})$. Moreover  $\cH^{p}  \left( \{ \rho_{t} >0 \} \setminus A_{t}\right)=0$. Therefore, if we prove \eqref{eq:BMHpPi} then also \eqref{eq:BMHp} will follow.
\\In order to get  \eqref{eq:BMHpPi}, observe that from Theorem  \ref{th:lower}   the $W_{2}$-geodesic $\{\mu_{t}\}_{t \in [0,1]}$ satisfies 
\begin{align}
\int_{\{\rho_{t}>0\}}\rho_t(x)^{-\frac{1}{p'}}d\mu_t(x) \geq  \int \left[{\sigma_{(K-\kappa_{\gamma}^{-})/p'}^{(1-t)}(|\dot{\gamma}|)}\, \rho_0^{-\frac{1}{p'}}(\gamma(0)) + {\sigma_{(K-\kappa_{\gamma}^+)/p'}^{(t)}(|\dot{\gamma}|)  } \, \rho_1^{-\frac{1}{p'}}(\gamma(1))\right]d\Pi(\gamma), \quad \forall p'\geq p, \label{eq:pfBM1}
\end{align}
On the other hand, from Jensen inequality we get
\begin{align}
\int_{\{\rho_{t}>0\}}\rho_t(x)^{-\frac{1}{p'}}d\mu_t(x)&= \cH^{p}(\{\rho_{t}>0\}) \;  \int_{\{\rho_{t}>0\}} \rho_{t}^{1- \frac{1} {p'}}  \, d\left(  \frac{1}{\cH^{p}(\{\rho_{t}>0\})}  \cH^{p} \llcorner \{\rho_{t}>0\} \right)   \nonumber \\
&\leq   \cH^{p}(\{\rho_{t}>0\}) \; \left(   \int_{\{\rho_{t}>0\}} \rho_{t}  \, d\left(  \frac{1}{\cH^{p}(\{\rho_{t}>0\})}  \cH^{p} \llcorner \{\rho_{t}>0\} \right) \right)^{1- \frac{1} {p'}}  \nonumber \\
&=   \cH^{p}(\{\rho_{t}>0\})^{\frac{1} {p'}}. \label{eq:pfBM2}
\end{align}
The combination of  \eqref{eq:pfBM1} and \eqref{eq:pfBM2} implies \eqref{eq:BMHpPi}.
\end{proof}

\end{document}